\documentclass[12pt]{article}

\usepackage[margin=1in]{geometry}
\usepackage{amsmath,amssymb,amsfonts,amsthm,mathtools}
\usepackage{enumitem}

\usepackage[authoryear,round]{natbib}

\usepackage[
colorlinks=true,
citecolor=blue,
linkcolor=blue,
urlcolor=blue,
pagebackref=true
]{hyperref}

\newtheorem{theorem}{Theorem}
\newtheorem{lemma}{Lemma}

\newtheorem{assumption}{Assumption}
\newtheorem{remark}{Remark}

\newcommand{\GBS}{\mathrm{GBS}}

\newcommand{\FDP}{\operatorname{FDP}}
\newcommand{\FNP}{\operatorname{FNP}}

\newcommand{\sn}{s_n}

\newcommand{\eps}{\varepsilon}

\title{Sharp Asymptotic Minimaxity of the Gavrilov--Benjamini--Sarkar Step-Down Testing Procedure in Sparse Gaussian Sequence Models}
\author{
	Prasenjit Ghosh\thanks{Email: \texttt{prasenjit@stat.tamu.edu}}\\[0.2ex]
	{\footnotesize Department of Statistics, Texas A\&M University, College Station, TX, USA}
}
\date{}

\begin{document}
\maketitle

\begin{abstract}
In this paper, we investigate the sharp asymptotic minimaxity of the classical Gavrilov--Benjamini--Sarkar (GBS) step-down multiple testing procedure \citep{GBS2009} in sparse Gaussian sequence models. Recent work of \citet{ACR2024} established a sharp asymptotic minimaxity theory for sparse multiple testing under both the classical beta-min framework and the more general heterogeneous signal-strength framework. Although several multiple testing procedures are known to attain these sharp asymptotic minimax benchmarks, corresponding results for the GBS procedure have not previously been established. We prove that the GBS procedure is sharply asymptotically minimax under both normalized Hamming loss and the combined $\mathrm{FDP}+\mathrm{FNP}$ loss over the classical beta-min parameter classes. We then extend the normalized Hamming risk analysis to the heterogeneous signal-strength classes introduced by \citet{ACR2024}, establishing a sharp asymptotic upper bound for the normalized Hamming risk of the GBS procedure. Furthermore, under the combined $\mathrm{FDP}+\mathrm{FNP}$ loss, we show that the GBS procedure continues to attain the corresponding sharp asymptotic minimax benchmark over the heterogeneous signal-strength classes. Our analysis is based on a shifted order-statistic inequality for the GBS critical sequence together with deterministic and empirical signal-crossing results for ordered alternative $p$-values. Unlike existing analyses of Benjamini--Hochberg procedures and empirical Bayes $\ell$-value methods, the proposed approach is intrinsic to the geometry of the GBS step-down procedure and does not rely on localization of a single implicit rejection threshold. Consequently, the present work extends the scope of sharp asymptotic minimaxity theory beyond threshold-localization-based procedures to a genuinely step-down multiple testing procedure whose rejection mechanism depends on the entire ordered sequence of $p$-values. Taken together with the recent asymptotic Bayes optimality result of \citet{GhoshChakrabarti2026GBSABOS}, these findings substantially strengthen the theoretical foundations of the GBS procedure from complementary Bayesian and frequentist decision-theoretic perspectives.
\end{abstract}
\bigskip

\noindent
\textbf{MSC 2020 subject classifications.}
Primary 62C20; Secondary 62G10, 62F03, 62J15.

\medskip

\noindent
\textbf{Keywords and phrases.}
Asymptotic minimaxity; False discovery rate; Gavrilov--Benjamini--Sarkar procedure; Multiple testing; Sparse Gaussian sequence model; Hamming loss; $\textrm{FDP}+\textrm{FNP}$ loss; Sparse signal detection.

\section{Introduction}

Sparse Gaussian sequence models provide a canonical framework for studying the
fundamental limits of large-scale multiple testing. In this setting, one observes
independent Gaussian random variables
\begin{equation}\notag
	X_i=\theta_i+Z_i,
	\qquad
	Z_i\stackrel{\mathrm{iid}}{\sim}N(0,1),
	\qquad
	i=1,\ldots,n,
	\label{eq:intro_gaussian_model}
\end{equation}
and seeks to simultaneously test
\begin{equation}\notag
	H_{0i}:\theta_i=0
	\qquad\text{versus}\qquad
	H_{1i}:\theta_i\ne0,
	\qquad
	i=1,\ldots,n.
	\label{eq:intro_testing_problem}
\end{equation}
A central feature of modern high-dimensional inference is sparsity, whereby only a
small fraction of the coordinates correspond to genuine signals. Sparse multiple
testing problems arise naturally in a wide variety of scientific applications, including
genomics, neuroimaging, signal processing, astronomy, and large-scale screening
studies. Over the past three decades, the development of principled procedures for
identifying sparse signals while controlling erroneous discoveries has become a major
theme in statistical methodology.

Among the most influential developments in this area is the false discovery rate (FDR) framework introduced by \citet{BH1995}. Since its introduction, the FDR paradigm has fundamentally reshaped the theory and practice of large-scale multiple testing by providing a principled balance between false discoveries and power. A vast literature has subsequently developed around the construction, analysis, and refinement of FDR-controlling procedures; see, for example, \citet{BY2001}, \citet{STO_2002}, \citet{STS_2004}, \citet{SKS2006}, \citet{BKY2006}, \citet{SKS2008}, \citet{GBS2009}, \citet{BLROQ2009}, \citet{SUN_CAI_2009}, \citet{NR2012}, \citet{GHS2014}, and the references therein. These developments have produced a rich collection of step-up, step-down, adaptive, empirical Bayes, and local false-discovery-rate procedures possessing attractive finite-sample error-control properties and strong empirical performance in high-dimensional settings. Beyond methodological advances, this literature has also stimulated extensive theoretical investigations into the operating characteristics, optimality properties, and large-sample behavior of multiple testing procedures under a variety of sparsity regimes and inferential loss functions.

Beyond error-rate control, a central objective in modern multiple testing theory is to understand the extent to which a given procedure approaches the fundamental limits of sparse signal recovery. While false discovery rate control provides protection against excessive false rejections, it does not, by itself, quantify how efficiently a procedure recovers sparse signals. In particular, two procedures may control the FDR at the same nominal level while exhibiting substantially different false-negative behavior, classification accuracy, and overall inferential efficiency. Consequently, an important objective of modern multiple testing theory is to determine whether procedures originally developed from error-control considerations also attain fundamental decision-theoretic optimality benchmarks for sparse signal recovery. Such investigations seek to understand the extent to which a given procedure approaches the intrinsic limits of sparse inference and to identify the mechanisms responsible for such optimality. Within this framework, asymptotic Bayes optimality under sparsity (ABOS) and sharp asymptotic minimaxity provide two complementary notions of optimal performance, arising respectively from Bayesian and frequentist decision-theoretic perspectives. A substantial body of work has investigated ABOS, minimax risk theory, posterior contraction, and sharp sparse detection boundaries; see, among others, \citet{ABDJ2006}, \citet{BCFG2011}, \citet{DG2013}, \citet{GTGC2015}, \citet{GC2016}, \citet{PaulChakrabarti2025AISM}, and the references therein. Collectively, these investigations seek to determine whether widely used multiple testing procedures attain the best possible asymptotic performance in sparse settings and to characterize the fundamental limits of sparse inference.

A natural question arising from this body of work is whether classical multiple testing procedures originally developed for error-rate control also attain these more refined optimality benchmarks. Establishing such results not only provides theoretical justification beyond nominal error guarantees, but also clarifies the extent to which a procedure approaches the fundamental limits of sparse signal recovery. From this perspective, asymptotic Bayes optimality and sharp asymptotic minimaxity furnish two complementary frameworks for evaluating the performance of multiple testing procedures under sparsity. The former assesses performance relative to an optimal Bayes benchmark, whereas the latter characterizes performance relative to the minimax risk over suitably defined sparse parameter spaces.

The present paper is concerned with the Gavrilov--Benjamini--Sarkar (GBS) step-down multiple testing procedure \citep{GBS2009}. The GBS procedure constitutes one of the most important FDR-controlling step-down methods in the multiple testing literature and has received considerable attention from both theoretical and applied perspectives. From a structural standpoint, GBS occupies a distinctive position among classical FDR-controlling procedures. Unlike the Benjamini--Hochberg step-up rule, whose asymptotic behavior has been extensively investigated through threshold-localization arguments, GBS employs a fundamentally different step-down geometry. Understanding whether such a procedure also attains optimal sparse-recovery performance is therefore of intrinsic interest and provides insight into the robustness of sparse multiple testing optimality across different testing architectures. More recently, \citet{GhoshChakrabarti2026GBSABOS} established asymptotic Bayes optimality under sparsity of the GBS procedure in sparse Gaussian sequence models. That work provided a detailed characterization of the asymptotic Type-I and Type-II error behavior of GBS and demonstrated that, under suitable sparsity assumptions, the induced Bayes risk attains the optimal asymptotic benchmark.

A related but distinct line of research concerns asymptotic minimaxity and sharp risk
optimality in sparse multiple testing problems. While false discovery rate control
provides protection against excessive false discoveries, it does not directly quantify
the overall decision-theoretic performance of a testing procedure. In particular,
procedures possessing similar FDR behavior may exhibit markedly different false-negative
rates, classification accuracy, and overall risk. This observation has motivated the
study of loss functions that combine type~I and type~II errors and has led to a growing
literature on minimax optimality in sparse multiple testing; see, for example,
\citet{fromont2016family}, \citet{arias2017distribution},
\citet{belitser2020needles}, and \citet{rabinovich2020optimal}.

A natural approach to assessing such decision-theoretic performance is through minimax
risk analysis. In this framework, a testing procedure is evaluated through its
worst-case risk over a suitably defined parameter space, thereby providing guarantees
that hold uniformly across a broad class of sparse configurations. Meaningful minimax
analysis in sparse multiple testing requires both a sufficiently rich parameter space
and signal-strength conditions ensuring that non-null effects can be distinguished from
noise with nontrivial probability. These considerations have led to the development of
beta-min frameworks, which now play a central role in the study of sparse testing
boundaries and minimax optimality.

In the context of sparse Gaussian sequence models, \citet{arias2017distribution}
introduced a beta-min framework for the combined loss $\mathrm{FDP}+\mathrm{FNP}$ and
established a sharp dichotomy between impossible and possible testing regimes. Under an
equal-signal-strength assumption, they showed that below a universal logarithmic
threshold no thresholding procedure can achieve nontrivial minimax performance, whereas
above this threshold suitably calibrated BH-type procedures attain vanishing minimax
risk. Subsequently, \citet{rabinovich2020optimal} derived non-asymptotic bounds for
combined type~I and type~II error criteria in generalized Gaussian sequence models and
showed that BH-type procedures can attain the corresponding lower bounds under stronger
signal-strength assumptions. Collectively, these works highlighted the fundamental role
of beta-min separation conditions in determining the limits of reliable sparse signal
recovery.

More recently, \citet{ACR2024} developed a sharp asymptotic minimaxity theory for sparse multiple testing under natural sparse testing losses. Their work provides one of the most refined decision-theoretic characterizations currently available for sparse multiple testing by identifying the exact asymptotic risk boundaries governing sparse signal recovery and by establishing multiple testing procedures that attain these boundaries adaptively. A central component of their theory is the classical beta-min framework, which imposes a separation condition requiring every nonzero mean to exceed a critical signal-strength threshold of the form
\begin{equation}\notag
	a_{n,b}
	=
	\sqrt{2\log(n/s_n)}+b,
	\qquad
	b\in\mathbb R,
	\label{eq}
\end{equation}
where $s_n$ denotes the number of signals. The constant $b$ determines the position of the signal strength relative to the sparse detection boundary and governs the limiting difficulty of the testing problem. The beta-min boundary plays a role analogous to a phase-transition threshold in sparse inference. Signals substantially above this boundary can typically be recovered with asymptotically vanishing error, whereas signals below the boundary become increasingly difficult to distinguish from noise. Consequently, this regime represents the most challenging nontrivial setting for sparse multiple testing and provides a natural benchmark for studying the optimality of multiple testing procedures. Beyond this classical setting, \citet{ACR2024} further developed a substantially more general heterogeneous signal-strength framework, allowing the nonzero means to vary across coordinates while preserving the same sparse detection boundary through an appropriate aggregate characterization of the signal strengths.

Within the beta-min framework, \citet{ACR2024} showed that, under both normalized classification loss and the combined $\textrm{FDP}+\textrm{FNP}$ loss, the sharp asymptotic minimax risk over the beta-min class $\Theta_b$ is governed by
\begin{equation}\notag
	\bar{\Phi}(b)
	=
	1-\Phi(b).
	\label{eq}
\end{equation}
Thus, the sparse multiple testing problem exhibits a remarkably sharp transition: as $b$ increases, signals become easier to recover and the minimax risk decreases, whereas as $b$ decreases, the problem approaches the impossibility regime. Sharp asymptotic minimaxity is considerably more stringent than rate-optimality, as it requires a procedure not only to achieve the correct asymptotic scaling of the risk but also to attain the exact asymptotic minimax constant governing the sparse testing problem. Their theory further establishes corresponding sharp asymptotic minimaxity results over substantially broader heterogeneous signal-strength classes, thereby providing a unified minimax framework encompassing both equal- and heterogeneous-signal-strength regimes.

The results of \citet{ACR2024} further showed that certain standard procedures, including the Benjamini--Hochberg procedure with a vanishing nominal level and empirical Bayes $\ell$-value procedures under suitable spike-and-slab priors, attain these sharp minimax boundaries. These findings are particularly noteworthy in light of earlier work establishing asymptotic optimality properties of the same procedures from Bayesian decision-theoretic perspectives. For example, Benjamini--Hochberg type procedures and multiple testing rules induced by horseshoe-type shrinkage priors are known to enjoy asymptotic Bayes optimality under sparsity \citep{BCFG2011, GTGC2015, GC2016, PaulChakrabarti2025AISM} and have subsequently been shown to attain sharp asymptotic minimaxity under suitable signal-strength conditions \citep{PGC2025}. This emerging connection between Bayesian and frequentist notions of optimality naturally raises the question of whether similar phenomena hold more broadly for other multiple testing procedures.

More recently, \citet{GhoshChakrabarti2026GBSABOS} established asymptotic Bayes optimality under sparsity of the Gavrilov--Benjamini--Sarkar procedure. In view of the aforementioned connection between asymptotic Bayes optimality under sparsity and sharp asymptotic minimaxity, it is natural to ask whether the GBS procedure also attains the sharp asymptotic minimax benchmarks established by \citet{ACR2024} under both the classical beta-min framework and the more general heterogeneous signal-strength framework. However, corresponding sharp asymptotic minimaxity results for the GBS procedure do not appear to have been previously established. This gap is particularly noteworthy because the GBS procedure is a classical FDR-controlling method whose rejection rule possesses a genuinely step-down geometry fundamentally different from that of the Benjamini--Hochberg step-up procedure. Consequently, its asymptotic minimax behavior cannot be deduced directly from existing threshold-localization analyses developed for BH-type procedures, making a separate and intrinsically step-down analysis necessary. More broadly, the present work extends the scope of sharp asymptotic minimaxity theory beyond procedures whose rejection mechanisms can be analyzed through localization of a single implicit threshold. Instead, it demonstrates that sharp asymptotic minimaxity can also be established for genuinely step-down procedures whose rejection decisions are determined by the geometry of the ordered $p$-values rather than by a single implicit threshold.


Motivated by the foregoing discussion, we investigate the sharp asymptotic minimaxity of the GBS procedure in sparse Gaussian sequence models under both the classical beta-min framework and the more general heterogeneous signal-strength framework developed by \citet{ACR2024}. Our objective is to investigate whether the GBS procedure attains the sharp sparse multiple testing boundaries identified by \citet{ACR2024} under these two complementary formulations and, more broadly, whether the connection between asymptotic Bayes optimality under sparsity and sharp asymptotic minimaxity extends to this classical step-down multiple testing procedure. To this end, we develop a direct analysis of the GBS rejection path that exploits the distinctive geometry of the GBS critical sequence and yields sharp asymptotic minimaxity results under both normalized Hamming loss and the combined $\textrm{FDP}+\textrm{FNP}$ loss.

The main contributions of this paper are as follows. First, we establish that the classical Gavrilov--Benjamini--Sarkar step-down procedure attains the sharp sparse multiple testing boundaries identified by \citet{ACR2024} under the classical beta-min framework. Specifically, we prove sharp asymptotic minimaxity under normalized Hamming loss over the beta-min classes $\Theta_b$, showing that the normalized classification risk attains the optimal asymptotic constant $\bar{\Phi}(b)$. We then extend the sharp asymptotic upper-bound component of this theory to the more general heterogeneous signal-strength classes $\Theta(a,s_n)$ introduced by \citet{ACR2024}, establishing a sharp asymptotic upper bound for the normalized Hamming risk of the GBS procedure. Furthermore, we establish sharp asymptotic minimaxity under the combined $\mathrm{FDP}+\mathrm{FNP}$ loss under both the classical beta-min and heterogeneous signal-strength frameworks, showing that the GBS procedure achieves the optimal asymptotic trade-off between false discoveries and missed discoveries under two of the principal sparse multiple testing criteria.

From a methodological perspective, our analysis develops a direct proof tailored to the geometry of the GBS step-down procedure. The argument is based on a shifted order-statistic inequality for the GBS critical sequence together with deterministic and empirical signal-crossing analyses for ordered alternative $p$-values. Unlike existing analyses of the Benjamini--Hochberg procedure and empirical Bayes local false-discovery-rate methods, our approach does not rely on threshold-localization arguments and is intrinsic to the GBS rejection mechanism itself. Consequently, the present work extends the analytical framework of \citet{ACR2024} from threshold-localization-based analyses to a genuinely step-down multiple testing procedure whose rejection decisions are determined by the geometry of the ordered $p$-values rather than by localization of a single implicit threshold.

Finally, the present work complements two recent theoretical developments concerning the GBS procedure. In \citet{GC_ADMISSIBILITY_2026}, a broad class of residual-based step-down multiple testing procedures was shown to be admissible under a vector-valued loss for arbitrary covariance structures. Under independence, the classical GBS procedure is recovered as a special case of this general residual-based framework. More recently, \citet{GhoshChakrabarti2026GBSABOS} established that, under independence, the GBS procedure attains asymptotic Bayes optimality under sparsity by asymptotically matching the Bayes Oracle risk. The present paper complements these results by establishing sharp asymptotic minimaxity relative to the frequentist minimax benchmark under both the classical beta-min framework and the more general heterogeneous signal-strength framework introduced by \citet{ACR2024}. Taken together, these works show that, under independence, the GBS procedure enjoys finite-sample admissibility, asymptotic Bayes optimality under sparsity, and sharp asymptotic minimaxity from three complementary decision-theoretic perspectives. To the best of our knowledge, the GBS procedure is among the very few classical multiple testing procedures whose theoretical properties have been rigorously characterized simultaneously from these complementary finite-sample, Bayesian, and frequentist viewpoints.

The remainder of the paper is organized as follows. Section~\ref{sec:model_framework} introduces the sparse Gaussian sequence model, the beta-min and heterogeneous signal-strength parameter spaces, the associated sharp asymptotic minimax framework, and the Gavrilov--Benjamini--Sarkar (GBS) step-down procedure. Section~\ref{sec:gbs_sharp_minimaxity_theory} develops the sharp asymptotic minimax theory of the GBS procedure. We first establish sharp asymptotic minimaxity under normalized Hamming loss and the combined $\mathrm{FDP}+\mathrm{FNP}$ loss over the classical beta-min classes. We then extend the normalized Hamming risk analysis to the more general heterogeneous signal-strength framework introduced by \citet{ACR2024}, establishing a sharp asymptotic upper bound under normalized Hamming loss, and further establish sharp asymptotic minimaxity under the combined $\mathrm{FDP}+\mathrm{FNP}$ loss over the heterogeneous signal-strength classes. Section~\ref{sec:discussion} concludes with a discussion of the results, their methodological implications, and directions for future research. Finally, the Appendix contains the technical proofs of the principal theoretical results.

\subsection*{Notation}
Throughout the paper, for two sequences $\{a_n\}$ and $\{b_n\}$ with
$b_n\neq0$, we write $a_n=o(b_n)$ if
\[
\lim_{n\to\infty}\frac{a_n}{b_n}=0.
\]
We write $a_n\lesssim b_n$ if there exists a constant $C>0$ such that
$a_n\le Cb_n$ for all sufficiently large $n$, and
$a_n\asymp b_n$ if both $a_n\lesssim b_n$ and $b_n\lesssim a_n$ hold.

For any two real numbers $a$ and $b$, we write
\[
a \vee b = \max\{a,b\}, \qquad
a \wedge b = \min\{a,b\},
\]
and, for any real number $x$,
\[
(x)_+ = \max\{x,0\}.
\]
We let $\phi(\cdot)$ and $\Phi(\cdot)$ denote the probability density
function and cumulative distribution function of the standard normal
distribution, respectively, and write
\[
\bar{\Phi}(x)=1-\Phi(x),
\qquad x\in\mathbb{R}.
\]

\section{Sparse Gaussian Model, Sharp Asymptotic Minimax Framework, and the GBS Procedure}
\label{sec:model_framework}

In this section, we introduce the mathematical framework underlying the subsequent theoretical developments. We begin by describing the sparse Gaussian sequence model and the associated beta-min parameter spaces introduced by \citet{ACR2024}. We then review the sharp asymptotic minimax framework for sparse multiple testing, including the normalized Hamming loss and the combined $\textrm{FDP}+\textrm{FNP}$ loss, together with the corresponding optimal minimax benchmarks established by \citet{ACR2024}. Finally, we introduce the Gavrilov--Benjamini--Sarkar (GBS) step-down procedure \citep{GBS2009}, which is the primary object of study in the present paper.

\subsection{Sparse Gaussian Sequence Model}
\label{sec:gaussian_seq}

We consider the sparse Gaussian sequence model
\begin{equation}
	X_i
	=
	\theta_i+\varepsilon_i,
	\qquad
	\varepsilon_i
	\stackrel{\mathrm{i.i.d.}}{\sim}
	N(0,1),
	\qquad
	i=1,\ldots,n,
	\label{eq:model}
\end{equation}
where $\theta=(\theta_1,\ldots,\theta_n)\in\mathbb R^n$ is an unknown mean vector. For each coordinate $i$, our objective is to simultaneously test
\[
H_{0i}:\theta_i=0,
\qquad
\text{versus}
\qquad
H_{1i}:\theta_i\neq0,
\qquad
i=1,\ldots,n.
\]

Let
\[
S_\theta
=
\{i:\theta_i\neq0\}
\]
denote the support of $\theta$, and let
\[
s_n
=
|S_\theta|
\]
denote the number of nonzero coordinates. Throughout the paper, we work under the following asymptotic sparsity regime.

\begin{assumption}[Sparse asymptotic regime]
	\label{ass:sparse}
	Throughout the paper,
	\[
	n\to\infty,
	\qquad
	s_n\to\infty,
	\qquad
	\frac{n}{s_n}\to\infty.
	\]
\end{assumption}

The primary objective in the sparse Gaussian sequence model is to identify the unknown support $S_\theta$ while controlling erroneous discoveries. Throughout this paper, we work in the high-dimensional sparse regime described by Assumption~\ref{ass:sparse}, under which the number of signals diverges but remains negligible relative to the ambient dimension. This asymptotic framework forms the basis of the sharp asymptotic minimax theory for sparse multiple testing developed by \citet{ACR2024} and will be adopted throughout the sequel.

\subsection{Beta-Min Parameter Classes and Sharp Asymptotic Minimaxity}
\label{subsec:beta_min_framework}

Following \citet{ACR2024}, we evaluate multiple testing procedures over the beta-min parameter spaces. For a fixed constant $b\in\mathbb R$, define
\begin{equation}
	a_{n,b}
	=
	\sqrt{2\log(n/s_n)}
	+
	b.
	\label{eq:beta_min_boundary}
\end{equation}
The corresponding beta-min parameter class is
\begin{equation}
	\Theta_b
	=
	\left\{
	\theta\in\mathbb R^n:
	|S_\theta|=s_n,
	\quad
	|\theta_i|
	\ge
	a_{n,b}
	\ \text{for all }i\in S_\theta
	\right\}.
	\label{eq:theta_b}
\end{equation}

The parameter spaces $\Theta_b$ characterize the critical signal-strength regime for sparse multiple testing. The threshold $a_{n,b}$ represents a refinement of the classical sparse detection boundary, with the constant $b$ determining the position of the signal strength relative to this boundary. Signals substantially above the beta-min threshold can typically be recovered with asymptotically vanishing error, whereas signals below the threshold become increasingly difficult to distinguish from noise, rendering reliable recovery impossible in general. Consequently, the beta-min classes $\Theta_b$ represent the most challenging nontrivial regime for sparse multiple testing and provide the natural parameter spaces for studying sharp asymptotic minimaxity. Throughout this paper, they serve as the benchmark parameter spaces over which the minimax performance of multiple testing procedures is evaluated.

Let
\[
\varphi=(\varphi_1,\ldots,\varphi_n)\in\{0,1\}^n
\]
denote a multiple testing procedure, where $\varphi_i=1$ indicates rejection of $H_{0i}$ and $\varphi_i=0$ otherwise. Define the numbers of false positives and false negatives by
\begin{equation}
	V(\theta,\varphi)
	=
	\sum_{i:\theta_i=0}\varphi_i,
	\qquad
	T(\theta,\varphi)
	=
	\sum_{i:\theta_i\neq0}(1-\varphi_i).
	\label{eq:V_T_definition}
\end{equation}

Following \citet{ACR2024}, we evaluate multiple testing procedures under two complementary sparse testing loss functions. The first is the normalized Hamming loss, which measures the overall classification accuracy by combining false positives and false negatives. The second is the combined false discovery proportion (FDP) and false nondiscovery proportion (FNP) loss, which balances the proportions of erroneous rejections and missed discoveries. Together, these two loss functions provide complementary decision-theoretic measures of performance and form the basis of the sharp asymptotic minimax framework considered in this paper.

The normalized Hamming loss is defined by
\begin{equation}
	\mathcal H(\theta,\varphi)
	=
	\frac{1}{s_n}
	\mathbb E_\theta
	\left\{
	V(\theta,\varphi)
	+
	T(\theta,\varphi)
	\right\},
	\label{eq:hamming_loss}
\end{equation}
with corresponding minimax risk
\begin{equation}
	\mathcal R_H(\Theta_b)
	=
	\inf_{\varphi}
	\sup_{\theta\in\Theta_b}
	\mathcal H(\theta,\varphi).
	\label{eq:minimax_hamming}
\end{equation}


Following \citet{ACR2024}, we also consider the combined false discovery
proportion (FDP) and false nondiscovery proportion (FNP) loss. For convenience,
we explicitly denote the false nondiscovery proportion by FNP. The false
discovery proportion and false nondiscovery proportion are respectively defined
by
\begin{equation}
	\FDP(\theta,\varphi)
	=
	\frac{V(\theta,\varphi)}
	{1\vee\sum_{i=1}^n\varphi_i},
	\qquad
	\FNP(\theta,\varphi)
	=
	\frac{T(\theta,\varphi)}
	{1\vee s_n},
	\label{eq:FDP_FNP}
\end{equation}
The corresponding risk is
\begin{equation}
	\mathcal L_F(\theta,\varphi)
	=
	\mathbb E_\theta
	\left\{
	\FDP(\theta,\varphi)
	+
	\FNP(\theta,\varphi)
	\right\},
	\label{eq:FDP_FNP_loss}
\end{equation}
with associated minimax risk
\begin{equation}
	\mathcal R_F(\Theta_b)
	=
	\inf_{\varphi}
	\sup_{\theta\in\Theta_b}
	\mathcal L_F(\theta,\varphi).
	\label{eq:minimax_FDP_FNP}
\end{equation}

A fundamental result of \citet{ACR2024} establishes the sharp asymptotic
minimax benchmarks under both normalized Hamming loss and the combined
$\mathrm{FDP}+\mathrm{FNP}$ loss. Specifically,
\begin{equation}
	\mathcal R_H(\Theta_b)
	=
	\bar{\Phi}(b)+o(1),
	\qquad
	\mathcal R_F(\Theta_b)
	=
	\bar{\Phi}(b)+o(1),
	\label{eq:sharp_boundary}
\end{equation}
where $\bar{\Phi}(b)=1-\Phi(b)$. Thus, the optimal asymptotic performance under both loss functions is governed by the same universal constant
$\bar{\Phi}(b)$. Moreover, \citet{ACR2024} showed that these minimax benchmarks are attained adaptively by suitably calibrated Benjamini--Hochberg procedures and empirical Bayes $\ell$-value procedures induced by spike-and-slab priors. Consequently, their work provides the precise decision-theoretic benchmarks
for sparse multiple testing under both normalized Hamming loss and the combined $\mathrm{FDP}+\mathrm{FNP}$ loss. Therefore, establishing sharp asymptotic minimaxity of a multiple testing procedure amounts to showing that
its maximal risk over the beta-min class $\Theta_b$ asymptotically coincides with the corresponding minimax benchmark under each of these loss functions. The principal objective of the present paper is to establish this property for
the classical Gavrilov--Benjamini--Sarkar (GBS) step-down procedure \citep{GBS2009}. We therefore conclude this section by briefly reviewing the
GBS procedure and introducing the notation that will be used throughout the remainder of the paper.

\subsection{Heterogeneous Signal-Strength Framework}
\label{subsec:heterogeneous_framework}

The beta-min framework described in the previous subsection provides a canonical local formulation of sparse multiple testing by requiring all nonzero means to exceed a common signal-strength threshold. This setting has played a central role in the development of sharp asymptotic minimaxity theory because it captures the most challenging nontrivial regime for sparse signal recovery through a single separation parameter. Nevertheless, in many high-dimensional applications the nonzero signals are inherently heterogeneous and may exhibit substantially different magnitudes across coordinates. Consequently, restricting attention to a common signal-strength threshold does not fully capture the intrinsic difficulty of sparse multiple testing.

To address this limitation, \citet{ACR2024} introduced a considerably more general heterogeneous signal-strength framework that permits the nonzero means to vary arbitrarily while preserving the same sparse asymptotic regime. Rather than being characterized by a single separation parameter, the asymptotic difficulty of the testing problem is now determined collectively by the entire configuration of signal strengths. This leads to a substantially richer sharp asymptotic minimaxity theory that contains the classical beta-min formulation as a special case.

Specifically, let
\[
a=(a_1,\ldots,a_{s_n})
\]
be a vector of positive numbers, and define the heterogeneous signal-strength parameter class
\begin{equation}
	\Theta(a,s_n)
	=
	\left\{
	\theta\in\mathbb R^n:
	|S_\theta|=s_n,\ 
	\text{there exist distinct } i_1,\ldots,i_{s_n}
	\text{ such that }
	|\theta_{i_j}|\ge a_j,\ 
	1\le j\le s_n
	\right\}.
	\label{eq:theta_general_a}
\end{equation}

Throughout this framework, we assume
\begin{equation}
	n\to\infty,
	\qquad
	s_n\to\infty,
	\qquad
	\frac{n}{s_n}\to\infty,
	\label{eq:general_sparse_regime}
\end{equation}
and define the sparse detection boundary
\begin{equation}
	a_n^*
	=
	\sqrt{2\log(n/s_n)}.
	\label{eq:oracle_threshold_general}
\end{equation}

The sharp asymptotic minimax benchmark over the heterogeneous parameter classes is characterized by the aggregate quantity
\begin{equation}
	\Lambda_n(a)
	=
	\frac{1}{s_n}
	\sum_{j=1}^{s_n}
	\bar\Phi(a_j-a_n^*),
	\label{eq:Lambda_general}
\end{equation}
or equivalently,
\[
1-\Lambda_n(a)
=
\frac{1}{s_n}
\sum_{j=1}^{s_n}
\Phi(a_j-a_n^*).
\]

The quantity $\Lambda_n(a)$ summarizes the collective difficulty of recovering the heterogeneous signals and plays the role of the sharp asymptotic minimax benchmark over the parameter classes $\Theta(a,s_n)$. Unlike the beta-min framework, where the asymptotic benchmark is determined by the single constant $\bar{\Phi}(b)$, the heterogeneous framework captures the combined contribution of all signal strengths through the aggregate quantity $\Lambda_n(a)$.

The heterogeneous framework strictly generalizes the classical beta-min formulation. Indeed, when
\[
a_j=a_n^*+b,
\qquad
1\le j\le s_n,
\]
for some fixed $b\in\mathbb R$, we have
\[
\Lambda_n(a)
=
\bar\Phi(b),
\]
so that the heterogeneous signal-strength benchmark reduces exactly to the sharp asymptotic minimax constant obtained under the beta-min framework.

\subsection{The Gavrilov--Benjamini--Sarkar (GBS) Step-Down Procedure}

We now review the Gavrilov--Benjamini--Sarkar (GBS) step-down procedure
\citep{GBS2009}, which constitutes the principal object of study in the
present paper. Although originally developed as an adaptive false discovery
rate controlling procedure under independence, its structural properties are
fundamentally different from those of the classical Benjamini--Hochberg
step-up procedure and play a crucial role in the subsequent asymptotic
analysis.

For each $i=1,\ldots,n$, define the two-sided Gaussian $p$-value by
\begin{equation}
	P_i
	=
	2\bar{\Phi}(|X_i|).
	\label{eq:pvalue_definition}
\end{equation}
Let
\begin{equation}
	P_{(1)}
	\le
	\cdots
	\le
	P_{(n)}
	\label{eq:ordered_pvalues}
\end{equation}
denote the ordered $p$-values.

For a nominal significance level $\alpha_n\in(0,1)$, the GBS critical
constants are defined by
\begin{equation}
	c_j
	=
	\frac{j\alpha_n}
	{n+1-j(1-\alpha_n)},
	\qquad
	j=1,\ldots,n.
	\label{eq:gbs_critical_constants}
\end{equation}
When $j=o(n)$, these critical values satisfy
\[
c_j
=
\frac{j\alpha_n}{n}
\{1+o(1)\},
\]
so that, in the sparse regime, the early portion of the GBS critical sequence
behaves approximately linearly.

The GBS rejection number is given by
\begin{equation}
	R_n^{\GBS}
	=
	\max
	\left\{
	r:
	P_{(j)}
	\le
	c_j
	\text{ for every }
	1\le j\le r
	\right\},
	\label{eq:gbs_rejection_number}
\end{equation}
with the convention that $R_n^{\GBS}=0$ whenever the above set is empty.
The procedure rejects precisely the hypotheses
\begin{equation}
	H_{0(1)},\ldots,H_{0(R_n^{\GBS})}.
	\label{eq:gbs_rejected_hypotheses}
\end{equation}

Unlike step-up procedures, whose rejection set is determined by locating a
single threshold crossing, the GBS procedure requires the entire initial
segment of ordered $p$-values to remain below the corresponding critical
constants. Consequently, the rejection path depends simultaneously on all
preceding inequalities
\[
P_{(j)}\le c_j,
\qquad
j=1,\ldots,R_n^{\GBS},
\]
giving the GBS procedure a genuinely step-down geometry. This feature makes
its asymptotic analysis substantially different from existing analyses of
BH-type procedures based on threshold-localization arguments.

Throughout the paper, we assume that the nominal significance level satisfies
the following regularity condition.

\begin{assumption}[Nominal significance level]
	\label{ass:alpha}
	The nominal level satisfies
	\[
	\alpha_n\to0,
	\qquad
	\log(1/\alpha_n)=o\left\{\sqrt{\log(n/s_n)}\right\}.
	\]
\end{assumption}

Assumption~\ref{ass:alpha} ensures that the nominal level decreases
sufficiently slowly for the GBS critical sequence to remain informative in
the sparse asymptotic regime while simultaneously guaranteeing asymptotically
negligible false-positive contributions. This assumption will be used
throughout the subsequent theoretical developments.

The proofs developed in this paper exploit the specific geometry of the GBS
critical sequence rather than reducing the analysis to localization of an
implicit rejection threshold. Our approach is based on a shifted
order-statistic inequality together with deterministic and empirical
signal-crossing arguments, yielding a proof that is intrinsic to the GBS
step-down procedure.

\section{Sharp Asymptotic Minimaxity of the GBS Procedure}
\label{sec:gbs_sharp_minimaxity_theory}

In this section, we establish the sharp asymptotic minimaxity of the classical Gavrilov--Benjamini--Sarkar (GBS) step-down procedure under the sparse Gaussian sequence model introduced in Section~\ref{sec:model_framework}. We first consider the classical beta-min framework of \citet{ACR2024}, where the performance of a multiple testing procedure is evaluated through its worst-case risk over the parameter classes $\Theta_b$. We establish sharp asymptotic minimaxity of the GBS procedure under both the normalized Hamming loss and the combined $\mathrm{FDP}+\mathrm{FNP}$ loss by showing that its asymptotic risk attains the exact minimax benchmark identified by \citet{ACR2024}. We then extend these results to the more general heterogeneous signal-strength framework introduced by \citet{ACR2024}. Under the normalized Hamming loss, we establish a sharp asymptotic upper bound
over the heterogeneous signal-strength classes. We further show that, under the combined $\textrm{FDP}+\textrm{FNP}$ loss, the GBS procedure continues to attain the corresponding sharp asymptotic minimax benchmark.

\subsection{Proof Strategy and Structural Ingredients}

The proofs developed in this paper are organized around two complementary analytical components. We first establish sharp asymptotic minimaxity over the classical beta-min parameter classes. The analysis is based on two principal ingredients. The first is a shifted order-statistic inequality for the GBS critical sequence, which yields asymptotically negligible false-positive contributions. The second consists of deterministic and empirical signal-crossing results showing that the GBS rejection path captures the optimal asymptotic proportion of signals. Together, these ingredients establish sharp asymptotic minimaxity under both normalized Hamming loss and the combined $\mathrm{FDP}+\mathrm{FNP}$ loss over the beta-min classes.

We then extend the same analytical framework to the more general
heterogeneous signal-strength classes introduced by
\citet{ACR2024}. Although the parameter space becomes substantially
richer, the overall proof strategy remains unchanged. The
false-positive contribution continues to be asymptotically negligible,
while the false-negative analysis is obtained by extending the
deterministic and empirical signal-crossing theory to heterogeneous
signal strengths. This yields a sharp asymptotic upper bound for the
normalized Hamming risk and establishes sharp asymptotic minimaxity
under the combined FDP+FNP loss, with both results characterized
through the aggregate quantity $\Lambda_n(a)$.

For clarity of exposition, we state the principal sharp asymptotic minimaxity results in this section and defer all auxiliary results and their proofs to the Appendix, where the underlying analytical ingredients are developed in the logical order in which the proof is constructed.

\subsection{Sharp Asymptotic Minimaxity under the Beta-Min Framework}

The first main result establishes the sharp asymptotic minimaxity of the GBS procedure under normalized Hamming loss over the classical beta-min parameter classes. It shows that the normalized classification risk of the GBS procedure attains the exact sharp asymptotic minimax benchmark identified by \citet{ACR2024}. Thus, despite its fundamentally different step-down rejection mechanism, the GBS procedure achieves the same optimal asymptotic performance as previously established for suitably calibrated Benjamini--Hochberg procedures and empirical Bayes $\ell$-value methods.

\begin{theorem}[Sharp asymptotic minimaxity of GBS under normalized Hamming loss]
	\label{thm:hamming-main}
	Assume the sparse Gaussian sequence model \eqref{eq:intro_gaussian_model} and Assumptions
	\ref{ass:sparse}--\ref{ass:alpha}. Fix $b\in\mathbb R$ and consider the beta-min class $\Theta_b$ defined in \eqref{eq:theta_b}. Then the GBS procedure satisfies
	\begin{equation}
		\sup_{\theta\in\Theta_b}
		\frac{1}{s_n}
		\mathbb E_{\theta}
		\{V(\theta,\varphi^{\GBS})
		+
		T(\theta,\varphi^{\GBS})\}
		\le
		\bar\Phi(b)+o(1).
		\label{eq:hamming_upper_bound}
	\end{equation}
	Consequently, using the sharp minimax lower bound for normalized classification risk established by \citet{ACR2024},
	\begin{equation}
		\sup_{\theta\in\Theta_b}
		\frac{1}{s_n}
		\mathbb E_{\theta}
		\{V(\theta,\varphi^{\GBS})
		+
		T(\theta,\varphi^{\GBS})\}
		=
		\mathcal R_H(\Theta_b)+o(1)
		=
		\bar\Phi(b)+o(1).
		\label{eq:hamming_sharp_minimax}
	\end{equation}
	Thus the GBS procedure is sharply asymptotically minimax under normalized Hamming loss.
\end{theorem}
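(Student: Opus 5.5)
The plan is to prove only the upper bound \eqref{eq:hamming_upper_bound}; \eqref{eq:hamming_sharp_minimax} then follows from the lower bound $\mathcal R_H(\Theta_b)\ge\bar\Phi(b)+o(1)$ of \citet{ACR2024}, because $\mathcal R_H(\Theta_b)$ is at most the maximal risk of $\varphi^{\GBS}$. Fix $\theta\in\Theta_b$ and split the normalized risk into the false-positive part $\mathbb E_\theta V/s_n$ and the false-negative part $\mathbb E_\theta T/s_n$. Every bound below will depend on $\theta$ only through $n$, $s_n$, $\alpha_n$ and $b$, so each holds uniformly over $\Theta_b$.

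\textbf{False positives.} The idea is a shifted order-statistic comparison. Let $P^0_{(1)}\le\cdots\le P^0_{(n-s_n)}$ be the ordered null $p$-values. If $R=R_n^{\GBS}$ and $V=v$ nulls are rejected, then the $v$-th smallest null $p$-value sits at some position $j\le s_n+v$ in the full ordering, and it satisfies $P_{(j)}\le c_j$. Because $c_j$ is increasing in $j$, this gives $P^0_{(v)}\le c_{s_n+v}$. Therefore
\[
\mathbb E_\theta V\le\sum_{v\ge1}\mathbb P\bigl(P^0_{(v)}\le c_{s_n+v}\bigr).
\]
For $v\le n/2$ we have $c_{s_n+v}\lesssim(s_n+v)\alpha_n/n$, and the count $\#\{i\text{ null}:P_i\le c_{s_n+v}\}$ is Binomial with mean at most about $(s_n+v)\alpha_n$. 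A Chernoff bound then gives $\mathbb P(\mathrm{Bin}\ge v)$ exponentially small once $v\ge C s_n\alpha_n$. Summing, $\mathbb E_\theta V\lesssim s_n\alpha_n+o(s_n)$, which is $o(s_n)$ since $\alpha_n\to0$. The terms with $v$ of order $n$ are handled by the same tail bound, because $c_{n}=\alpha_n\cdot n/(n\alpha_n+1)$ stays below $1$ with the binomial mean a fraction of $v$. A cruder alternative is to invoke the known FDR control of GBS together with a bound $R\lesssim s_n$ with high probability; I would keep the direct argument as the primary route.

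\textbf{False negatives.} This is the main step. Set $t_n=\bar\Phi^{-1}(\alpha_n s_n/(4n))$, so that
\[
t_n=\sqrt{2\log(n/s_n)}+O\bigl(\log(1/\alpha_n)/\sqrt{\log(n/s_n)}\bigr)+o(1)=a_n^*+o(1)
\]
by Assumption~\ref{ass:alpha}. Let $K=\#\{i\in S_\theta:|X_i|\ge t_n\}$. The signal coordinates are independent and each satisfies $\mathbb P(|X_i|\ge t_n)\ge\Phi(|\theta_i|-t_n)\ge\Phi(b)+o(1)$, so Chebyshev gives $K\ge s_n(\Phi(b)-\varepsilon)$ with probability tending to one, for every fixed $\varepsilon>0$.

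The deterministic signal-crossing step is to show that, on this event and on a null event, GBS rejects at least $K$ hypotheses. That is, one checks $P_{(j)}\le c_j$ for every $j\le K$. The null event is that the number of nulls with $P_i\le c_j$ is small for every $j$, say $N_0(c_j)\le j/2$ uniformly in $j\le s_n$. Small $j$ is exactly where step-down differs from BH, since the first several ordered $p$-values must each pass their own tiny thresholds. There are two cases.
\begin{itemize}[leftmost=*]
\item For $j\le K$, the $j$ smallest $p$-values include large signals with $|X_i|\ge t_n$, whose $p$-values are at most $2\bar\Phi(t_n)=\alpha_n s_n/(2n)$. This is below $c_j$ only when $j\gtrsim s_n/2$.
\item For small $j$, I would instead use empirical signal crossing: the empirical count of signals exceeding the level $\bar\Phi^{-1}(c_j/2)$ is at least $j$, uniformly in $j$. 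Indeed, $\bar\Phi^{-1}(j\alpha_n/(2n))\le t_n+\delta$ for $j\ge\eta s_n$, and signals have $|\theta_i|\ge a_n^*+b$, so the count of signals above $t_n+\delta$ concentrates near $s_n\Phi(b-\delta)$. For $j\le\eta s_n$ with $\eta$ small, this count already exceeds $j$ with high probability.
\end{itemize}
Combining the two cases with a union bound or monotonicity argument over a grid of $j$ yields $P_{(j)}\le c_j$ for all $j\le K$.

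Consequently, with high probability all signals with $|X_i|\ge t_n+\delta$ are rejected, and
\[
\mathbb E T/s_n\le 1-\Phi(b-\delta)+\varepsilon+o(1).
\]
Letting $\delta,\varepsilon\to0$ gives $\bar\Phi(b)+o(1)$. The main obstacle is making the small-$j$ crossing uniform over $\Theta_b$: very large signals only help, but the requirement must hold at all $j$ simultaneously, and the $j$ near $\eta s_n$ need the slack $\delta$.
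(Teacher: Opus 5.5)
Your false-negative argument has a genuine gap at the small ranks $1\le j\le\eta s_n$. You flag this as the main obstacle but do not resolve it. For these $j$, the requirement $P_{(j)}\le c_j$ means that at least $j$ observations satisfy $|X_i|\ge\bar\Phi^{-1}(c_j/2)$, and this level lies \emph{above} $t_n+\delta$. At $j=1$ it is about $\sqrt{2\log(n/\alpha_n)}$; for $s_n=n^\beta$, say, this exceeds the signal level $a_n^*+b$ by a constant multiple of $\sqrt{\log n}$. Your second bullet only uses that roughly $\Phi(b-\delta)s_n$ signals exceed $t_n+\delta$. That count says nothing about how many signals exceed these higher levels, so ``this count already exceeds $j$'' does not give $P_{(j)}\le c_j$. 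The range cannot be skipped: GBS stops at the first failure, so if $P_{(1)}>c_1$ it rejects nothing. Your single Chebyshev event for the count above $t_n+\delta$ does correctly cover $\eta s_n\le j\le(\Phi(b-\delta)-\varepsilon)s_n$ simultaneously. That leaves only the small ranks open.

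Two ingredients are missing, and the paper supplies both.
\begin{enumerate}[label=(\roman*)]
\item An initial crossing, $\inf_{\theta\in\Theta_b}\mathbb E_\theta N_1(c_1)\to\infty$ (Lemma~\ref{lem:initial-crossing}). It needs a careful computation in which both $s_n\to\infty$ and Assumption~\ref{ass:alpha} are genuinely used: the $\log(1/\alpha_n)$ in the level at $c_1$ must be absorbed against the gain $\log s_n$. This handles all $j<L_n$ through the single event $\{N_1(c_1)\ge L_n\}$, with integers $L_n\to\infty$.
\item A uniform multiplicative margin $\mathbb E_\theta N_1(c_j)\ge(1+\eta_\varepsilon)j$ for $L_n\le j\le(\Phi(b)-\varepsilon)s_n$. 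The paper proves the crossing once, at $t_\varepsilon=(\Phi(b)-\varepsilon)\alpha_n s_n/n$. It then propagates it to all smaller thresholds via the monotonicity of $t\mapsto G_\mu(t)/t$ (Lemmas~\ref{lem:folded_normal_tail_ratio} and~\ref{lem:heterogeneous_tail_ratio}), which follows from the increasing folded-normal likelihood ratio $e^{-\mu^2/2}\cosh(\mu y)$. Finally it sums Chernoff bounds $e^{-C_\varepsilon j}$ over $j\ge L_n$.
\end{enumerate}
A grid argument as you suggest could replace that union bound. It would still need (i) and a lower bound on $\mathbb E_\theta N_1(c_j)/j$ that is uniform in $j\le\eta s_n$, and your proposal supplies neither.

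Some smaller points:
\begin{itemize}
\item From $R_n^{\GBS}\ge K$ you should conclude $T\le s_n-R_n^{\GBS}+V$, not that particular signals are rejected.
\item The null event $N_0(c_j)\le j/2$ is unnecessary, since null $p$-values below $c_j$ only help.
\item Your Chernoff treatment of false positives is a valid, if looser, substitute for the shifted order-statistic inequality of Lemma~\ref{lem:shifted-os}. One step needs repair: near $s_n+v=n$ the binomial mean is not a fixed fraction of $v$, since the ratio tends to one. There you should bound the complementary count $\mathrm{Bin}(n-s_n,1-c_{n-k})\le k$, whose mean is of order $(k+1)/\alpha_n$ for small $k$.
\end{itemize}
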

Theorem~\ref{thm:hamming-main} constitutes the principal result of the present paper. It establishes that the classical Gavrilov--Benjamini--Sarkar step-down procedure attains the exact sharp asymptotic minimax benchmark identified by \citet{ACR2024} under normalized Hamming loss. Consequently, despite its fundamentally different step-down geometry and the absence of a threshold-localization argument, the GBS procedure achieves the same optimal asymptotic performance as the suitably calibrated Benjamini--Hochberg procedure and empirical Bayes $\ell$-value procedures induced by spike-and-slab priors. Taken together with the recent asymptotic Bayes optimality result of \citet{GhoshChakrabarti2026GBSABOS}, Theorem~\ref{thm:hamming-main} substantially strengthens the theoretical understanding of the GBS procedure by showing that it enjoys not only asymptotic Bayes optimality under sparsity but also sharp asymptotic minimaxity at the sparse detection boundary.

\begin{proof}[{\bf Proof of Theorem \ref{thm:hamming-main}}]
	Combining Lemmas~\ref{lem:false-positive} and~\ref{lem:false-negative} gives
	\begin{equation}
		\sup_{\theta\in\Theta_b}
		\frac{1}{s_n}
		\mathbb{E}_{\theta}\{V(\theta,\varphi^{\GBS})+T(\theta,\varphi^{\GBS})\}
		\le \bar\Phi(b)+o(1).
		\label{eq:hamming_upper_bound_final}
	\end{equation}
	The matching lower bound follows from the sharp minimax lower bound for normalized
	classification risk in \citet{ACR2024}. This proves sharp asymptotic
	minimaxity.
\end{proof}

\begin{remark}
Theorem~\ref{thm:hamming-main} is stated for a fixed constant
	$b\in\mathbb R$, corresponding to the critical beta-min regime where the
	signal strengths remain within a constant distance of the sparse detection
	boundary. This is precisely the regime in which the testing problem exhibits
	nontrivial asymptotic behavior and the sharp minimax risk converges to the
	nondegenerate limit $\bar{\Phi}(b)\in(0,1)$. It would be of independent
	interest to investigate the extreme regimes $b=b_n\to+\infty$ and
	$b=b_n\to-\infty$. The former corresponds to increasingly strong signals, for
	which the normalized Hamming risk is expected to vanish asymptotically,
	whereas the latter approaches the impossibility boundary, where the sharp
	minimax risk tends to one and reliable signal recovery becomes impossible.
\end{remark}	


We now turn to the combined $\textrm{FDP}+\textrm{FNP}$ loss introduced in Section~\ref{sec:model_framework}. Unlike the normalized Hamming loss, which measures the overall classification error, the $\textrm{FDP}+\textrm{FNP}$ criterion separately quantifies false discoveries and missed discoveries and therefore provides a complementary decision-theoretic assessment of multiple testing procedures. As discussed in Section~\ref{sec:model_framework}, \citet{ACR2024} established that the sharp asymptotic minimax benchmark under this loss coincides with that under normalized Hamming loss and is again given by the universal constant $\bar{\Phi}(b)$. We now show that the GBS procedure also attains this optimal benchmark.

\begin{theorem}[Sharp asymptotic minimaxity under $\textrm{FDP}+\textrm{FNP}$ loss]
	\label{thm:fdp-fnp-main}
	Assume the sparse Gaussian sequence model and Assumptions
	\ref{ass:sparse}--\ref{ass:alpha}. Fix $b\in\mathbb R$ and consider the beta-min class
	$\Theta_b$. Then the GBS procedure satisfies
	\begin{equation}
		\sup_{\theta\in\Theta_b}
		\mathbb E_\theta
		\left\{
		\mathrm{FDP}(\theta,\varphi^{\GBS})
		+
		\mathrm{FNP}(\theta,\varphi^{\GBS})
		\right\}
		\le
		\bar\Phi(b)+o(1).
		\label{eq:fdp_fnp_upper_bound}
	\end{equation}
	Consequently, using the sharp minimax lower bound of
	\citet{ACR2024}, the GBS procedure is sharply asymptotically minimax
	under $\textrm{FDP}+\textrm{FNP}$ loss.
\end{theorem}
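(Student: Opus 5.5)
We split the risk into its two pieces and bound each uniformly over $\Theta_b$. The false nondiscovery part is the easy one. Since $\FNP(\theta,\varphi^{\GBS})=T(\theta,\varphi^{\GBS})/s_n$, its supremum of expectations is at most
\[
\sup_{\theta\in\Theta_b}\frac{1}{s_n}\mathbb E_\theta\{V(\theta,\varphi^{\GBS})+T(\theta,\varphi^{\GBS})\}.
\]
By Theorem~\ref{thm:hamming-main}, or more precisely by the false-negative lemma invoked in its proof, this is at most $\bar\Phi(b)+o(1)$. It therefore suffices to show
\[
\sup_{\theta\in\Theta_b}\mathbb E_\theta\{\FDP(\theta,\varphi^{\GBS})\}=o(1).
\]
The sharp lower bound of \citet{ACR2024} then gives minimaxity.

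For the FDP we do not appeal to the finite-sample FDR control of GBS under independence. That control gives $\mathrm{FDR}\le\alpha_n$ exactly, with $\alpha_n\to0$ by Assumption~\ref{ass:alpha}, but it is external to the paper. Instead we give a self-contained route using the false-positive bound. Write $R=R_n^{\GBS}$, and fix a small $\delta>0$, say $\delta<\Phi(b)/2$. Split on the event $A_n=\{R\ge \delta s_n\}$. On $A_n$ we have $\FDP\le V/(\delta s_n)$, so
\[
\mathbb E_\theta\{\FDP\,\mathbf 1_{A_n}\}\le \delta^{-1}\,\mathbb E_\theta V/s_n=o(1)
\]
uniformly, because the false-positive lemma shows $\sup_{\Theta_b}\mathbb E_\theta V/s_n\to0$. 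On $A_n^c$ we only use $\FDP\le1$, so it remains to show $\sup_{\Theta_b}\mathbb P_\theta(R<\delta s_n)\to0$.

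This last step is the main obstacle. We need a high-probability lower bound on the GBS rejection number, not just a bound on expected false negatives. Our plan is to reuse the empirical signal-crossing result behind the false-negative lemma. Among the $s_n$ signals, the number with $|X_i|$ beyond the level $\sqrt{2\log(n/s_n)}+o(1)$ is a sum of independent Bernoullis, each with success probability at least $\Phi(b)-o(1)$. By Chebyshev's inequality, with probability $1-o(1)$ at least $(\Phi(b)-\delta)s_n$ signals have $p$-values below roughly $c_{\lfloor\delta s_n\rfloor}$.

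It is not enough for many small $p$-values to exist, because GBS is step-down and needs every initial comparison $P_{(j)}\le c_j$ to hold. So we apply the deterministic crossing statement for each $j\le\delta s_n$, uniformly in $j$. Within this range $c_j\asymp j\alpha_n/n$. The $j$-th smallest signal $p$-value lies below $c_j$ simultaneously for all such $j$ with high probability, since $\log(1/\alpha_n)=o(\sqrt{\log(n/s_n)})$ costs only an $o(1)$ shift in the $z$-scale. We handle small $j$ separately: the few strongest signals are at least $a_{n,b}$ in absolute value, and a union bound over $j\le\log s_n$ should suffice there.

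If this uniform initial-segment control proves delicate, the fallback is the external fact $\mathrm{FDR}(\varphi^{\GBS})\le\alpha_n\to0$ from \citet{GBS2009}. Either route completes the proof.
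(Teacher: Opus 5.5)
Your proposal is correct, and your fallback is exactly the paper's proof. There, $\sup_{\Theta_b}\mathbb E_\theta\FDP\le\alpha_n$ comes from the finite-sample FDR control of GBS under independence, and $\mathbb E_\theta\FNP=\mathbb E_\theta T/s_n$ is handled by Lemma~\ref{lem:false-negative}.

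Your primary route for the FDP term is genuinely different. The step you flag as the main obstacle is already proved in the paper. Lemma~\ref{lem:empirical_crossing} with $\varepsilon=\Phi(b)-2\delta\in(0,\Phi(b))$ gives $\sup_{\Theta_b}\mathbb P_\theta(R_n^{\GBS}<\lfloor 2\delta s_n\rfloor)\to0$, and $\lfloor 2\delta s_n\rfloor\ge\delta s_n$ once $\delta s_n\ge1$. Together with Lemma~\ref{lem:false-positive}, your split then yields
\[
\sup_{\theta\in\Theta_b}\mathbb E_\theta\FDP\le\frac{\alpha_n(1+s_n^{-1})}{(1-\alpha_n)\delta}+o(1)=o(1),
\]
with no appeal to \citet{GBS2009}.

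Your own sketch for re-deriving that step would need repair. The observation that $\log(1/\alpha_n)=o(a_n)$ costs only an $o(1)$ shift on the $z$-scale gives the crossing only for $j/s_n$ bounded away from zero. For $j=o(s_n)$, the extra factor $j/s_n$ in $c_j$ shifts the $z$-level by an amount that need not vanish, for instance $j\approx\sqrt{s_n}$ with $\log s_n\asymp\log n$. The paper propagates the margin to all smaller ranks through the monotonicity of $G_\mu(t)/t$ (Lemmas~\ref{lem:folded_normal_tail_ratio} and~\ref{lem:heterogeneous_tail_ratio}). It handles all of the first $L_n$ ranks at once through Lemma~\ref{lem:initial-crossing} and $N_1(c_j)\ge N_1(c_1)$, rather than by a union bound.

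As for what each route buys: the paper's argument is a one-liner with the explicit bound $\alpha_n$, valid for every $\theta$, and it transfers verbatim to Theorem~\ref{thm:fdp_fnp_general}. Yours stays inside the paper's own lemmas, but it needs $R_n^{\GBS}\gtrsim s_n$ with high probability. That holds here only because $\Phi(b)>0$ for fixed $b$. It would therefore not cover the heterogeneous case when $1-\Lambda_n(a)\to0$: there $R_n^{\GBS}$ may be $o(s_n)$, and a bound on $\mathbb E_\theta V/s_n$ no longer controls the FDP.
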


Theorem~\ref{thm:fdp-fnp-main} complements
Theorem~\ref{thm:hamming-main} by extending the sharp asymptotic minimaxity of
the GBS procedure from normalized Hamming loss to the combined $\textrm{FDP}+\textrm{FNP}$ loss.
Since the false discovery proportion and false non-discovery proportion
measure two fundamentally different aspects of multiple testing performance,
this theorem demonstrates that the GBS procedure achieves the optimal
asymptotic trade-off between false discoveries and missed discoveries at the
sparse detection boundary. Consequently, the GBS procedure is sharply
asymptotically minimax under both principal sparse testing criteria
investigated by \citet{ACR2024}.

\begin{proof}[{\bf Proof of Theorem~\ref{thm:fdp-fnp-main}}]
	The proof follows immediately by combining the sharp false-negative bound of
	Lemma~\ref{lem:false-negative} with the false discovery rate control property
	of the GBS procedure under independence.	
	By the FDR control property of the GBS procedure under independence,
	\begin{equation}
		\sup_{\theta\in\Theta_b}
		\mathbb E_\theta
		\{\mathrm{FDP}(\theta,\varphi^{\GBS})\}
		\le
		\alpha_n.
		\label{eq:FDP_GBS_control}
	\end{equation}
	Since $\alpha_n\to0$, \eqref{eq:FDP_GBS_control} implies
	\begin{equation}
		\sup_{\theta\in\Theta_b}
		\mathbb E_\theta
		\{\mathrm{FDP}(\theta,\varphi^{\GBS})\}
		=
		o(1).
		\label{eq:FDP_negligible}
	\end{equation}
	
	Next, by Lemma~\ref{lem:false-negative},
	\begin{equation}
		\sup_{\theta\in\Theta_b}
		\mathbb E_\theta
		\{\mathrm{FNP}(\theta,\varphi^{\GBS})\}
		=
		\sup_{\theta\in\Theta_b}
		\frac{1}{s_n}
		\mathbb E_\theta T(\theta,\varphi^{\GBS})
		\le
		\bar\Phi(b)+o(1).
		\label{eq:FNP_bound_from_FN}
	\end{equation}
	Combining \eqref{eq:FDP_negligible} and \eqref{eq:FNP_bound_from_FN} gives
	\eqref{eq:fdp_fnp_upper_bound}. The matching lower bound follows from the sharp minimax
	lower bound for $\textrm{FDP}+\textrm{FNP}$ risk in \citet{ACR2024}. This proves the theorem.
\end{proof}

\subsection{Sharp Asymptotic Minimaxity over Heterogeneous Signal-Strength Classes}
\label{subsec:sharp_minimax_heterogeneous}

We now turn to the second principal contribution of the paper by extending the normalized Hamming risk analysis developed in the previous subsection to the heterogeneous signal-strength framework introduced in Section~\ref{subsec:heterogeneous_framework}. Unlike the classical beta-min formulation, which characterizes the difficulty of sparse multiple testing through a common signal-strength threshold, the heterogeneous framework permits the nonzero means to vary arbitrarily across coordinates while preserving the same sparse asymptotic regime. As discussed in Section~\ref{subsec:heterogeneous_framework}, the natural heterogeneous benchmark is governed by the aggregate quantity $\Lambda_n(a)$ rather than the universal constant $\bar{\Phi}(b)$.

Throughout this subsection, we assume the heterogeneous signal-strength framework introduced in Section~\ref{subsec:heterogeneous_framework}, together with the asymptotic conditions \eqref{eq:general_sparse_regime} and the nominal-level condition under Assumption~\ref{ass:alpha}. The following theorem establishes a sharp asymptotic upper bound for the normalized Hamming risk of the GBS procedure over heterogeneous signal-strength classes.

\begin{theorem}[Sharp asymptotic upper bound under normalized Hamming loss:
	heterogeneous signal-strength classes]
	\label{thm:hamming_general}
	Assume the sparse Gaussian sequence model \eqref{eq:model}, the heterogeneous signal-strength framework introduced in Section~\ref{subsec:heterogeneous_framework}, the sparsity condition \eqref{eq:general_sparse_regime}, and the nominal-level condition under Assumption~\ref{ass:alpha}. Fix a heterogeneous signal-strength vector
	\[
	a=(a_1,\ldots,a_{s_n}),
	\]
	and consider the corresponding parameter class $\Theta(a,s_n)$ defined in \eqref{eq:theta_general_a}. Then the GBS procedure satisfies
	\begin{equation}
		\sup_{\theta\in\Theta(a,s_n)}
		\frac{1}{s_n}
		\mathbb E_\theta
		\{V(\theta,\varphi^{\GBS})
		+
		T(\theta,\varphi^{\GBS})\}
		\le
		\Lambda_n(a)+o(1).
		\label{eq:hamming_general_upper}
	\end{equation}
\end{theorem}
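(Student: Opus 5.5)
We split the normalized Hamming risk into its false-positive and false-negative parts and bound each uniformly over $\Theta(a,s_n)$:
\[
\frac{1}{s_n}\mathbb E_\theta V(\theta,\varphi^{\GBS}) = o(1),
\qquad
\frac{1}{s_n}\mathbb E_\theta T(\theta,\varphi^{\GBS}) \le \Lambda_n(a)+o(1).
\]
The false-positive bound should not depend on the signal magnitudes. The argument behind Lemma~\ref{lem:false-positive} (the shifted order-statistic inequality for the GBS critical sequence) uses only that there are exactly $s_n$ non-null coordinates. Once the $s_n$ smallest signal $p$-values are removed, the $j$-th null order statistic is compared with $c_{j+s_n}$.

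The concrete control is as follows. Any null rejection requires a null $p$-value below $c_{R}$. On the event $R_n^{\GBS}\le C s_n$, this threshold is at most $c_{Cs_n}\approx C s_n\alpha_n/n$. The expected number of nulls below it is then $\lesssim s_n\alpha_n=o(s_n)$. The complementary event, $R_n^{\GBS}>Cs_n$, forces many null $p$-values to lie below linear critical values. A binomial tail bound makes this event exponentially unlikely, and since $V\le n$ its contribution is still $o(s_n)$ after multiplying by $n$. I would re-derive Lemma~\ref{lem:false-positive} in this form, noting that it is valid for every $\theta$ with $|S_\theta|=s_n$ and hence for $\Theta(a,s_n)$.

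For the false-negative part, I would first reduce to the boundary configuration. By a monotonicity (coupling) argument, each signal $p$-value is stochastically decreasing in $|\theta_i|$, and the GBS rejection set is monotone in the vector of $p$-values. So it suffices to bound $\mathbb E\,T$ for $|\theta_{i_j}|=a_j$ exactly, or to work directly with $P\bigl(|X_{i_j}|<t_n\bigr)$ for a deterministic level $t_n$.

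Next comes the heterogeneous signal-crossing step. Fix a small $\eps>0$ and define the threshold $t_n=a_n^*+\eps$. In $p$-value terms this is $u_n=2\bar\Phi(t_n)$, and Assumption~\ref{ass:alpha} gives
\[
u_n = o\bigl(\alpha_n s_n\eps'/n\bigr)
\quad\text{for any fixed } \eps'>0,
\]
because $\log(1/\alpha_n)=o(a_n^*)$ while $\bar\Phi(a_n^*+\eps)\approx (s_n/n)e^{-\eps a_n^*}/a_n^*$. We then show that, with probability $1-o(1)$ uniformly, every $j\le K_n$ satisfies $P_{(j)}\le c_j$, where $K_n$ is the number of signals with $|X_i|\ge t_n$. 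For $j\le K_n$ we have
\[
P_{(j)}\le u_n \le c_j \quad\text{whenever } j\ge j_0:=\eta s_n,
\]
since $c_j\approx j\alpha_n/n$. For small $j<j_0$, the signals with the largest $|X_i|$ lie far above $t_n$ (the deterministic crossing). Concretely, on an event of probability $1-o(1)$ the $j$-th largest signal statistic exceeds $\bar\Phi^{-1}(j\alpha_n/(2n))$, and this is checked via empirical quantiles of the signal statistics. If only $o(s_n)$ signals have $|X_i|\ge t_n$, then $T\ge s_n(1-o(1))$ anyway, which is bounded by counting. On the good event, $R_n^{\GBS}\ge K_n$ and, because GBS rejects the smallest $p$-values, every signal with $|X_i|\ge t_n$ is rejected. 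Therefore
\[
\mathbb E\,T \le \sum_j P\bigl(|X_{i_j}|<t_n\bigr)+s_n o(1)
\le \sum_j \bar\Phi(a_j-a_n^*-\eps)+o(s_n).
\]
Using $|\bar\Phi(x-\eps)-\bar\Phi(x)|\le \eps\,\phi(0)$ and then letting $\eps\downarrow0$ slowly gives $\Lambda_n(a)+o(1)$.

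The main obstacle is the small-$j$ part of the crossing argument under heterogeneity. Some $a_j$ may be tiny, so only a fraction $m_n$ of signals may be strong, and GBS needs the initial segment to pass with no gaps. If the strong signals number $o(s_n)$ yet are nonzero, the step-down can break early. I would handle this by a case split on $m_n=\#\{j:a_j\ge a_n^*+\eps\}$. If $m_n\ge\eta s_n$, the empirical crossing applies using only those signals together with the linear growth of $c_j$. Otherwise the bound $\Lambda_n(a)\ge 1-\eta-O(\eps)$ already makes the claim trivial, because $T\le s_n$.
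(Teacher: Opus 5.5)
There is a genuine gap in the case split you use to handle heterogeneity. You set $m_n=\#\{j:a_j\ge a_n^*+\eps\}$ and claim that $m_n<\eta s_n$ forces $\Lambda_n(a)\ge 1-\eta-O(\eps)$, so that $T\le s_n$ would suffice. This is false. A coordinate with $a_j$ slightly below $a_n^*+\eps$ contributes $\Phi(a_j-a_n^*)$, which can be nearly $\Phi(\eps)\approx 1/2$, to $1-\Lambda_n(a)$. All you can conclude is $1-\Lambda_n(a)\le \eta+\Phi(\eps)$. For example, take $a_j\equiv a_n^*+b$ with any fixed $b<\eps$, such as the beta-min boundary $b=0$. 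Then $m_n=0$ while $\Lambda_n(a)=\bar\Phi(b)<1$, so the trivial bound $1$ is not $\Lambda_n(a)+o(1)$. Yet about $\Phi(b-\eps)s_n$ signals do exceed your threshold $t_n$.

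So the proposal as written does not cover even the homogeneous boundary case. Your count of ``strong'' coordinates is on the wrong scale. What keeps the initial segment of the step-down path gap-free is the aggregate $\sum_{i\in S_\theta}G_{\theta_i}(c_j)$, to which coordinates at or below the boundary still contribute. Relatedly, the small-rank crossing (``checked via empirical quantiles of the signal statistics'') is the heart of the argument and is left unspecified. It needs two things:
\begin{itemize}
\item an initial crossing $\mathbb E_\theta N_1(c_1)\to\infty$ to cover $j$ below a slowly growing $L_n$, since Chernoff bounds $e^{-Cj}$ summed from $j=1$ are not $o(1)$;
\item a way to transfer a constant-level crossing at $t^*\asymp\alpha_n s_n/n$ down to all smaller $c_j$, which is exactly what the monotonicity of $t\mapsto G_\mu(t)/t$ (Lemma~\ref{lem:heterogeneous_tail_ratio}) provides.
\end{itemize}

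The gap is repairable, and the repaired argument differs genuinely from the paper's route. Split instead on $\#\{j:a_j\ge a_n^*-M\}$ with $\Phi(-M)\le\eta$; this is the set $J_n$ in Lemma~\ref{lem:heterogeneous_initial_crossing}. If this count is below $\eta s_n$, then $\Lambda_n(a)\ge 1-\eta-\Phi(-M)$ and the trivial bound works. Otherwise, each such signal has $G_{a_n^*-M}(t^*)\ge\Phi(-M)-o(1)$ at $t^*=q^*\alpha_n s_n/n$. Lemma~\ref{lem:heterogeneous_tail_ratio} then gives $\mathbb E_\theta N_1(c_j)\ge 2j$ for all $L_n\le j\le q^*s_n/2$ once $q^*\le\eta\Phi(-M)/3$. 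Chernoff plus a union bound, together with the initial crossing, make the early segment gap-free with probability $1-o(1)$.

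From there your observation $u_n=2\bar\Phi(a_n^*+\eps)=o(\alpha_n s_n/n)$ finishes the proof. It gives $c_j\ge u_n$ for all $j\ge q^*s_n/2$, so every $p$-value below $u_n$, null or not, is rejected. Hence $\mathbb E_\theta T\le\sum_j\bar\Phi(a_j-a_n^*-\eps)+o(s_n)$.

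The paper instead shows $R_n^{\GBS}\ge(q_n-\delta)s_n$ by propagating an aggregate crossing over the whole range of ranks (Lemma~\ref{lem:heterogeneous_deterministic_crossing}), and then uses $T\le s_n-R_n^{\GBS}+V$. Your repaired version needs the crossing only on a small linear range of ranks, but it is in effect a fixed-threshold localization.

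A secondary point concerns your re-derivation of the false-positive bound. The step ``since $V\le n$'' fails when $s_n$ grows slowly. The binomial bound for $\{R_n^{\GBS}>Cs_n\}$ is only of order $\exp\{-c\,s_n\log(1/\alpha_n)\}$. With, say, $s_n=\sqrt{\log n}$, Assumption~\ref{ass:alpha} makes $n$ times this bound diverge. Simply invoke Lemma~\ref{lem:false-positive}, which, as you note, holds for every $\theta$ with $|S_\theta|=s_n$.
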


Theorem~\ref{thm:hamming_general} extends the sharp asymptotic upper-bound component of Theorem~\ref{thm:hamming-main} to the heterogeneous signal-strength framework. Indeed, when all nonzero means satisfy
\[
a_j=a_n^*+b,\qquad 1\le j\le s_n,
\]
the heterogeneous benchmark reduces to
\[
\Lambda_n(a)=\bar{\Phi}(b),
\]
so that the upper bound of Theorem~\ref{thm:hamming-main} is recovered as
a special case. More generally, the theorem extends the normalized
Hamming risk analysis of the GBS procedure beyond the classical
beta-min framework to the heterogeneous signal-strength classes
introduced by \citet{ACR2024}. In contrast to the beta-min setting,
where the asymptotic benchmark is determined by the universal constant
$\bar{\Phi}(b)$, the heterogeneous framework is governed by the aggregate
quantity $\Lambda_n(a)$, which reflects the collective contribution of
the individual signal strengths. Theorem~\ref{thm:hamming_general}
shows that the normalized Hamming risk of the GBS procedure remains
uniformly bounded above by this heterogeneous benchmark over the entire
parameter class $\Theta(a,s_n)$.

\begin{proof}[{\bf Proof of Theorem~\ref{thm:hamming_general}}]
	The false-positive contribution is controlled by the shifted order-statistic
	inequality for the GBS critical constants. The same argument used in
	Lemma~\ref{lem:false-positive} applies uniformly over all configurations
	with exactly $s_n$ nonzero coordinates. Hence, uniformly over
	$\theta\in\Theta(a,s_n)$,
	\[
	\frac{1}{s_n}
	\mathbb E_\theta V(\theta,\varphi^{\GBS})
	\le
	\frac{\alpha_n}{1-\alpha_n}
	\left(1+\frac{1}{s_n}\right)
	=
	o(1).
	\]
	By Lemma~\ref{lem:false_negative_general},
	\[
	\sup_{\theta\in\Theta(a,s_n)}
	\frac{1}{s_n}
	\mathbb E_\theta T(\theta,\varphi^{\GBS})
	\le
	\Lambda_n(a)+o(1).
	\]
	Combining the preceding two displays gives
	\[
	\sup_{\theta\in\Theta(a,s_n)}
	\frac{1}{s_n}
	\mathbb E_\theta
	\{V(\theta,\varphi^{\GBS})+T(\theta,\varphi^{\GBS})\}
	\le
	\Lambda_n(a)+o(1).
	\]
	This establishes the asserted asymptotic upper bound and completes the proof.
\end{proof}

We next consider the combined $\mathrm{FDP}+\mathrm{FNP}$ loss under the heterogeneous signal-strength framework. As in the beta-min setting, this criterion simultaneously measures the proportions of false discoveries and missed discoveries and therefore provides a complementary decision-theoretic assessment of multiple testing performance. The following theorem shows that the sharp asymptotic minimaxity of the GBS procedure continues to hold under this loss over the general parameter classes $\Theta(a,s_n)$.

\begin{theorem}[Sharp asymptotic minimaxity under the combined $\mathrm{FDP}+\mathrm{FNP}$ loss:
	heterogeneous signal-strength classes]
	\label{thm:fdp_fnp_general}
	Assume the sparse Gaussian sequence model \eqref{eq:model}, the heterogeneous signal-strength framework introduced in Section~\ref{subsec:heterogeneous_framework}, the sparsity condition \eqref{eq:general_sparse_regime}, and the nominal-level condition under Assumption~\ref{ass:alpha}. Fix a heterogeneous signal-strength vector
	\[
	a=(a_1,\ldots,a_{s_n}),
	\]
	and consider the corresponding parameter class $\Theta(a,s_n)$ defined in \eqref{eq:theta_general_a}. Then the GBS procedure satisfies
	\begin{equation}
		\sup_{\theta\in\Theta(a,s_n)}
		\mathbb E_\theta
		\left\{
		\mathrm{FDP}(\theta,\varphi^{\GBS})
		+
		\mathrm{FNP}(\theta,\varphi^{\GBS})
		\right\}
		\le
		\Lambda_n(a)+o(1).
		\label{eq:fdp_fnp_general_upper}
	\end{equation}
	Consequently, in conjunction with the sharp asymptotic minimax lower bound for the combined $\mathrm{FDP}+\mathrm{FNP}$ loss over the heterogeneous signal-strength classes $\Theta(a,s_n)$ established by \citet[Section~5]{ACR2024}, the GBS procedure is sharply asymptotically minimax under the combined $\mathrm{FDP}+\mathrm{FNP}$ loss over $\Theta(a,s_n)$.
\end{theorem}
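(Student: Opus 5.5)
The plan mirrors the proof of Theorem~\ref{thm:fdp-fnp-main}. We split the combined risk into its FDP part and its FNP part and bound each term separately and uniformly over $\Theta(a,s_n)$. Subadditivity of the supremum gives
\[
\sup_{\theta\in\Theta(a,s_n)}\mathbb E_\theta\{\FDP+\FNP\}
\le
\sup_{\theta\in\Theta(a,s_n)}\mathbb E_\theta\FDP(\theta,\varphi^{\GBS})
+
\sup_{\theta\in\Theta(a,s_n)}\mathbb E_\theta\FNP(\theta,\varphi^{\GBS}).
\]

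For the FDP term we use the finite-sample FDR guarantee of the GBS procedure under independence \citep{GBS2009}. It holds for every configuration of $\theta$, since under model \eqref{eq:model} the null $p$-values $P_i$, $i\notin S_\theta$, are exactly uniform and independent of the non-null ones. Hence $\mathbb E_\theta\FDP\le\alpha_n$ for every $\theta\in\mathbb R^n$, and in particular uniformly over $\Theta(a,s_n)$. Assumption~\ref{ass:alpha} gives $\alpha_n\to0$, so the first supremum is $o(1)$.

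We need no separate signal-crossing argument here, which is the point of the reduction. As a backup that stays intrinsic to the paper, we could instead control the FDP through the shifted order-statistic bound used in the proof of Theorem~\ref{thm:hamming_general}. That route requires more work, because FDP has the random denominator $1\vee R_n^{\GBS}$. One would need $R_n^{\GBS}\gtrsim s_n$ with high probability, which can fail when $\Lambda_n(a)$ is close to one. The FDR guarantee avoids this issue entirely.

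For the FNP term, $s_n\ge1$ eventually, so $\FNP(\theta,\varphi^{\GBS})=T(\theta,\varphi^{\GBS})/s_n$ and $\mathbb E_\theta\FNP=s_n^{-1}\mathbb E_\theta T$. Lemma~\ref{lem:false_negative_general}, the heterogeneous false-negative bound already invoked in the proof of Theorem~\ref{thm:hamming_general}, then gives $\sup_{\theta\in\Theta(a,s_n)}s_n^{-1}\mathbb E_\theta T\le\Lambda_n(a)+o(1)$. Adding the two bounds yields \eqref{eq:fdp_fnp_general_upper}. The minimaxity conclusion then follows by pairing this upper bound with the lower bound $\inf_\varphi\sup_{\Theta(a,s_n)}\mathcal L_F\ge\Lambda_n(a)-o(1)$ of \citet[Section~5]{ACR2024}, under whatever regularity conditions on $a$ that result requires.

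The main obstacle lies in the false-negative lemma, not in this assembly. That lemma must show that, uniformly over heterogeneous configurations, the GBS step-down path does not stop early. Concretely, with high probability the ordered $p$-values stay below the critical constants $c_j\approx j\alpha_n/n$ through an index $j\approx s_n(1-\Lambda_n(a))$. This requires the empirical signal-crossing count $\#\{i\in S_\theta:P_i\le c_j\}$ to concentrate around $\sum_{i\in S_\theta}\Phi(|\theta_i|-a_n^*-o(1))$. It also uses that $\log(1/\alpha_n)=o(\sqrt{\log(n/s_n)})$, which makes the threshold $\bar\Phi^{-1}(c_j/2)$ equal to $a_n^*+o(1)$ for all such $j$.

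A second subtlety is the step-down constraint near small indices $j$. There the count of signals crossing may be too small relative to $j$. We would handle this by showing that the first $o(s_n)$ positions are cleared with high probability. If the stopping event can occur early, we would bound its contribution using monotonicity of $T$ in $|\theta_i|$ to reduce to the boundary configuration $|\theta_{i_j}|=a_j$.
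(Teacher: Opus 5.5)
Your proposal is correct and follows the paper's proof essentially verbatim. The FDP term is bounded by $\alpha_n=o(1)$ through the finite-sample FDR guarantee of GBS under independence, $\mathrm{FNP}=T/s_n$ is controlled by Lemma~\ref{lem:false_negative_general}, and the lower bound of \citet{ACR2024} gives minimaxity; your sketch of that lemma (clear the first $o(s_n)$ ranks, then a bulk crossing argument) also matches Lemmas~\ref{lem:heterogeneous_initial_crossing}--\ref{lem:heterogeneous_empirical_crossing}.
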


Theorem~\ref{thm:fdp_fnp_general} complements Theorem~\ref{thm:hamming_general} by extending the sharp asymptotic minimaxity of the GBS procedure from normalized Hamming loss to the combined $\mathrm{FDP}+\mathrm{FNP}$ loss over the heterogeneous signal-strength classes. Consequently, the GBS procedure attains the exact sharp asymptotic minimax benchmark established by \citet{ACR2024} under both principal sparse multiple testing criteria, even in the substantially more general heterogeneous signal-strength setting.

\begin{proof}[{\bf Proof of Theorem~\ref{thm:fdp_fnp_general}}]
	Under independence, the GBS procedure controls the false discovery rate at
	level $\alpha_n$. Therefore,
	\[
	\sup_{\theta\in\Theta(a,s_n)}
	\mathbb E_\theta
	\{\mathrm{FDP}(\theta,\varphi^{\GBS})\}
	\le
	\alpha_n
	=
	o(1).
	\]
	Moreover, since $|S_\theta|=s_n$ for every $\theta\in\Theta(a,s_n)$,
	\[
	\mathrm{FNP}(\theta,\varphi^{\GBS})
	=
	\frac{T(\theta,\varphi^{\GBS})}{s_n}.
	\]
	Hence Lemma~\ref{lem:false_negative_general} gives
	\[
	\sup_{\theta\in\Theta(a,s_n)}
	\mathbb E_\theta
	\{\mathrm{FNP}(\theta,\varphi^{\GBS})\}
	\le
	\Lambda_n(a)+o(1).
	\]
	Combining the preceding two displays proves
	\eqref{eq:fdp_fnp_general_upper}.
	The matching lower bound follows from the sharp minimax lower bound for
	the combined $\mathrm{FDP}+\mathrm{FNP}$ loss over heterogeneous signal-strength
	classes established in \citet{ACR2024}. This proves the theorem.
\end{proof}

\section{Discussion}
\label{sec:discussion}

The present paper establishes the sharp asymptotic minimaxity of the classical Gavrilov--Benjamini--Sarkar (GBS) step-down procedure \citep{GBS2009} under two of the most widely studied sparse multiple testing loss functions, namely the normalized Hamming loss and the combined $\mathrm{FDP}+\mathrm{FNP}$ loss. Building upon the sharp asymptotic minimaxity framework recently developed by \citet{ACR2024}, we have shown that the GBS procedure is sharply asymptotically minimax under both loss functions over the classical beta-min parameter classes. We have further established a sharp asymptotic upper bound for the normalized Hamming risk over the more general heterogeneous signal-strength classes and proved that the GBS procedure continues to attain the sharp asymptotic minimax benchmark under the combined $\mathrm{FDP}+\mathrm{FNP}$ loss in this substantially richer setting. In particular, under the beta-min parameter classes $\Theta_b$, both the normalized Hamming and $\mathrm{FDP}+\mathrm{FNP}$ risks attain the sharp asymptotic minimax constant $\bar{\Phi}(b)$, while over the heterogeneous signal-strength classes $\Theta(a,s_n)$, the normalized Hamming risk is shown to satisfy a sharp asymptotic upper bound governed by the aggregate quantity $\Lambda_n(a)$, and the combined $\mathrm{FDP}+\mathrm{FNP}$ risk attains the corresponding sharp asymptotic minimax benchmark established by \citet{ACR2024}. Consequently, despite its fundamentally different step-down rejection mechanism, the GBS procedure enjoys the same optimal asymptotic decision-theoretic performance under the classical beta-min framework and, under the heterogeneous signal-strength framework, continues to attain the corresponding sharp asymptotic minimax
benchmark for the combined $\mathrm{FDP}+\mathrm{FNP}$ loss while satisfying a sharp asymptotic upper bound under normalized Hamming loss. Consequently, the present work places the GBS procedure alongside the suitably calibrated Benjamini--Hochberg procedure and empirical Bayes
$\ell$-value procedures among the very few multiple testing procedures whose sharp asymptotic decision-theoretic properties have been
rigorously characterized under the sparse Gaussian sequence model.

Beyond the optimality result itself, the present work contributes a different methodology for establishing sharp asymptotic minimaxity. Existing analyses of Benjamini--Hochberg procedures and empirical Bayes local false-discovery-rate methods rely heavily on localization of an implicit rejection threshold. Because the GBS procedure is a genuine step-down procedure whose rejection decision depends on the entire sequence of ordered $p$-values, such arguments are not directly applicable. Instead, our analysis is based on a direct study of the GBS rejection path through shifted order-statistic inequalities together with deterministic and empirical signal-crossing arguments. We expect that these techniques may prove useful in studying other multiple testing procedures whose rejection mechanisms cannot be reduced to a single threshold.

The present work also complements two recent theoretical developments concerning the GBS procedure. In \citet{GC_ADMISSIBILITY_2026}, a broad class of residual-based step-down multiple testing procedures was shown to be admissible under a vector-valued loss for arbitrary covariance structures. Under independence, the classical GBS procedure is recovered as a special case of this general framework. More recently, \citet{GhoshChakrabarti2026GBSABOS} established that, under independence, the GBS procedure attains asymptotic Bayes optimality under sparsity by asymptotically matching the Bayes Oracle risk. The present paper complements these results by establishing sharp asymptotic minimaxity relative to the frequentist minimax benchmark under both the classical beta-min framework and the more general heterogeneous signal-strength framework. Taken together, these works show that, under independence, the GBS procedure enjoys finite-sample admissibility, asymptotic Bayes optimality under sparsity, and sharp asymptotic minimaxity from three complementary decision-theoretic perspectives. To the best of our knowledge, the GBS procedure is among the very few classical multiple testing procedures whose theoretical properties have been rigorously characterized simultaneously from these complementary finite-sample, Bayesian, and frequentist viewpoints.

Several interesting directions remain open. The present analysis is carried out under independence, whereas modern high-dimensional applications often involve complex dependence structures. Extending sharp asymptotic minimaxity to residual-based multiple testing procedures under general covariance dependence would therefore constitute a natural continuation of the present work. More broadly, the proof strategy developed here suggests a possible route toward a general sharp asymptotic minimaxity theory for genuinely step-down multiple testing procedures. The analysis relies on two fundamental structural properties of the GBS rejection path: asymptotically negligible false-positive contributions and deterministic--empirical signal crossing along the initial portion of the critical sequence. It would therefore be of considerable interest to identify broad conditions on step-down critical sequences under which these two properties continue to hold, thereby extending the present results beyond the GBS procedure to a wider class of step-down and dependence-adaptive multiple testing rules.

Another interesting direction concerns the heterogeneous
signal-strength framework under normalized Hamming loss. The present
paper establishes a sharp asymptotic upper bound for the normalized
Hamming risk of the GBS procedure over the parameter classes
$\Theta(a,s_n)$. It would be of considerable interest to determine
whether this upper bound is asymptotically sharp by establishing a
matching minimax lower bound over the heterogeneous signal-strength
classes. Such a result would complete the heterogeneous normalized Hamming risk
theory developed in the present paper and provide a corresponding sharp
asymptotic minimax characterization under this loss function.

An important direction for future research is to extend the present sharp asymptotic minimaxity theory beyond the independence setting considered here. Recent developments on admissible residual-based step-down procedures under arbitrary covariance dependence \citep{GC_ADMISSIBILITY_2026} suggest that dependence can fundamentally alter the geometry of the rejection path. It would therefore be of considerable interest to investigate whether sharp asymptotic minimaxity can be established for such procedures under structured dependence. More broadly, understanding the interplay among covariance dependence, admissibility, asymptotic Bayes optimality under sparsity, and sharp asymptotic minimaxity remains an important open problem in high-dimensional multiple testing. The present work, together with the recent admissibility and asymptotic Bayes optimality results for the GBS procedure, suggests that optimal multiple testing procedures may be characterized simultaneously from finite-sample, Bayesian, and frequentist decision-theoretic perspectives. Developing a unified decision-theoretic theory that simultaneously incorporates dependence, admissibility, asymptotic Bayes optimality under sparsity, and sharp asymptotic minimaxity remains an important open problem in high-dimensional multiple testing.

More broadly, the present work demonstrates that sharp asymptotic minimaxity is not confined to multiple testing procedures whose asymptotic behavior can be analyzed through localization of a single implicit threshold. By developing a direct analysis of the rejection path of the Gavrilov--Benjamini--Sarkar step-down procedure, we show that the same optimal decision-theoretic guarantees can also be established for genuinely step-down multiple testing procedures whose rejection decisions depend on the geometry of the ordered $p$-values. We hope that the methodology developed here will facilitate analogous sharp asymptotic minimaxity results for broader classes of step-down and dependence-adaptive multiple testing procedures, thereby further extending the scope of sharp asymptotic minimaxity theory in high-dimensional inference. 

\appendix

\section*{Appendix: Technical Lemmas and Proofs}


This appendix contains the technical proofs underlying the principal theoretical results established in Section~\ref{sec:gbs_sharp_minimaxity_theory}. To preserve the flow of the main text, Section~\ref{sec:gbs_sharp_minimaxity_theory} presents only the main theorems together with their interpretations, while the auxiliary analytical results and their proofs are collected here. The arguments are organized according to the logical development of the proof strategy. We first establish the auxiliary probabilistic and analytical ingredients needed to control the false-positive contribution of the GBS procedure. We then develop the deterministic and empirical signal-crossing theory governing the false-negative contribution, culminating in the proofs of the sharp asymptotic minimaxity results under both normalized Hamming loss and the combined $\mathrm{FDP}+\mathrm{FNP}$ loss.

We begin by establishing that the false-positive contribution of the GBS procedure is asymptotically negligible. To this end, we first recall a shifted order-statistic inequality established in \citet{GhoshChakrabarti2026GBSABOS}. Although originally developed in the context of asymptotic Bayes optimality under sparsity, this inequality provides the key ingredient for obtaining sharp control of the false-positive contribution of the GBS procedure in the present setting. The remaining lemmas then develop the deterministic and empirical signal-crossing theory required to characterize the false-negative contribution, ultimately leading to the sharp asymptotic minimaxity results established in this paper. For completeness, we state the lemma below and refer the reader to \citet{GhoshChakrabarti2026GBSABOS} for its proof.

\begin{lemma}[Shifted order-statistic inequality]\label{lem:shifted-os}
	Let $U_{(1)}\le\cdots\le U_{(m)}$ be the order statistics of $m$ independent $U(0,1)$ random variables.  Let $k\ge0$ and $0<\alpha<1$.  For $\ell=1,\ldots,m$, define
	\[
	b_{\ell,k}^{(m)}(\alpha)
	=\frac{(k+\ell)\alpha}{m+k+1-(k+\ell)(1-\alpha)}.
	\]
	Then
	\[
	\sum_{j=1}^m
	\mathbb P\left(U_{(\ell)}\le b_{\ell,k}^{(m)}(\alpha),\ \ell=1,\ldots,j\right)
	\le \frac{\alpha}{1-\alpha}(k+1).
	\]
\end{lemma}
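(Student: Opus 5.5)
The plan is to reduce the sum to an expected rejection count and then prove the bound by induction on $m$ using a leave-one-out decomposition. The decomposition works because of a shift identity built into the constants. Write $N=m+k$ and $c(x)=x\alpha/\{N+1-x(1-\alpha)\}$, so that $b^{(m)}_{\ell,k}(\alpha)=c(k+\ell)$. Let
\[
R_m=\max\bigl\{r:\ U_{(\ell)}\le b^{(m)}_{\ell,k}(\alpha),\ \ell=1,\ldots,r\bigr\}
\]
be the step-down rejection count for $m$ uniforms with these constants. The events in the sum are exactly $\{R_m\ge j\}$, so the left-hand side equals $\mathbb E R_m$. The claim is therefore
\[
\mathbb E R_m\le \frac{\alpha}{1-\alpha}(k+1)\qquad\text{for all } m\ge1,\ k\ge0.
\]
One can read this as GBS applied to $N$ hypotheses, of which $k$ are signals with $p$-value $0$, and ask for the expected number of false rejections. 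The shift identity is
\[
b^{(m)}_{\ell+1,k}(\alpha)=b^{(m-1)}_{\ell,k+1}(\alpha),
\]
which holds because both equal $c(k+\ell+1)$ with the same $N=m+k$. This is what makes induction on $m$, with $k$ increasing by one, natural.

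For the leave-one-out step I write $R_m=\sum_{i=1}^m \mathbf 1\{U_i\ \text{rejected}\}$. For each $i$, let $R'_i$ be the step-down count of the other $m-1$ uniforms computed with constants $b^{(m-1)}_{\ell,k+1}$. I will check the step-down inclusion
\[
\{U_i\ \text{rejected}\}\subseteq\{U_i\le c(k+1+R'_i)\}.
\]
The argument is as follows. If $U_i$ is rejected at rank $r$, the $r-1$ smaller uniforms all pass their constants $c(k+1),\ldots,c(k+r-1)$. By the shift identity this means $R'_i\ge r-1$, so $U_i\le c(k+r)\le c(k+1+R'_i)$ by monotonicity of $c$. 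Since $R'_i$ is independent of $U_i$ and has the law of $R_{m-1}$ with parameter $k+1$, this gives
\[
\mathbb E R_m\le m\,\mathbb E\, c(k+1+R_{m-1}^{(k+1)}).
\]
The base case $m=1$ is immediate: $\mathbb E R_1=c(k+1)=(k+1)\alpha/\{1+(k+1)\alpha\}\le(k+1)\alpha/(1-\alpha)$.

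The main obstacle is closing the recursion, and crude use of the bound above will not do it. The function $c$ is convex, so Jensen goes the wrong way, and the factor $m$ is too large. My first attempt would therefore sharpen the inclusion to an exact identity for the event that $U_i$ is rejected. That event is the intersection of $\{U_i\le c(k+1+R'_i)\}$ with the condition that the uniforms of $R'_i$ stay below $U_i$ in the right order. I would then exploit the odds identity
\[
\frac{c_j}{1-c_j}=\frac{j\alpha}{N+1-j},
\]
which is exactly the feature that GBS critical constants are designed to have. The idea is to condition on the number $r$ of rejections among the others. On that event, the chance that a given remaining uniform is rejected is $c(k+1+r)$ minus a correction. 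Summing over the $m-r$ remaining indices (rather than all $m$), the contribution should be at most $(m-r)\,c(k+r+1)/(1-c(k+r+1))\cdot(\text{probability of the configuration})$. The odds identity turns this into
\[
\frac{(m-r)(k+r+1)\alpha}{m-r}=(k+r+1)\alpha,
\]
which telescopes against the induction hypothesis in the form $\mathbb E R^{(k+1)}_{m-1}\le\alpha(k+2)/(1-\alpha)$. The target is the recursion
\[
\mathbb E R_m\le\alpha\bigl(k+1+\mathbb E R_m\bigr),
\]
which rearranges to $\mathbb E R_m\le\alpha(k+1)/(1-\alpha)$, the desired bound. Verifying the exact bookkeeping in this conditioning — in particular that the ``$m-r$'' counting and the factor $1/(1-c)$ combine precisely through the odds identity — is the step I expect to require the most care.
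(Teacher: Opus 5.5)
\textbf{There is a genuine gap in the step that is supposed to produce your target recursion.} Several pieces are correct: the reduction of the sum to $\mathbb E R_m$, the shift identity, the inclusion $\{U_i\ \text{rejected}\}\subseteq\{U_i\le c(k+1+R_i')\}$, the base case, and the odds identity. The recursion $\mathbb E R_m\le\alpha(k+1+\mathbb E R_m)$ is also the right target. (The paper gives no proof of this lemma and defers to Ghosh and Chakrabarti.)

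As you describe it, you condition on $r=R_i'$, which is the count of the other $m-1$ uniforms under the \emph{shifted} constants. You then feed in the induction hypothesis for the $(m-1,k+1)$ problem. The most this can give is $\mathbb E R_m\le\alpha\{k+1+\mathbb E R^{(k+1)}_{m-1}\}$. That is a relation involving $R^{(k+1)}_{m-1}$, not $R_m$, and it does not close.

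With $\mathbb E R^{(k+1)}_{m-1}\le\alpha(k+2)/(1-\alpha)$ you get $\alpha(k+1)+\alpha^2(k+2)/(1-\alpha)$. This exceeds $\alpha(k+1)/(1-\alpha)$ by $\alpha^2/(1-\alpha)$. The problem is not slack in the hypothesis. For $m=2$, $k=0$ one has $\mathbb E R^{(1)}_1=2\alpha/(1+2\alpha)$, so $\alpha\{1+\mathbb E R^{(1)}_1\}=\alpha(1+4\alpha)/(1+2\alpha)$, which is larger than $\alpha/(1-\alpha)$ for every $\alpha<1/4$. Yet the true value $\mathbb E R_2=4\alpha(1+4\alpha+\alpha^2)/\{(2+\alpha)^2(1+2\alpha)\}$ does satisfy the bound.

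The underlying issue is that for a step-down rule the shifted leave-one-out (the step-up device) gives only a strict inclusion. For example, with $m=2$, $k=0$ and $c(1)<U_2<U_1\le c(2)$, you have $R_1'=1$ and $U_1\le c(2)$, but $R_2=0$. In addition, the factor $m-r$ in your bookkeeping has to count the non-rejections of the \emph{full} problem, which conditioning on $R_i'$ cannot deliver.

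\textbf{The missing idea is to leave one out with the \emph{unshifted} constants, and to drop the induction entirely.} Let $\tilde R_i$ be the step-down count of $\{U_j\}_{j\ne i}$ against $c(k+1),\ldots,c(k+m-1)$, with the same $N=m+k$. A direct rank check gives, for $0\le r\le m-1$,
\[
\{U_i\ \text{rejected}\}=\{U_i\le c(k+1+\tilde R_i)\},\qquad
\{U_i\ \text{not rejected},\ R_m=r\}=\{U_i>c(k+r+1)\}\cap\{\tilde R_i=r\}.
\]
Both are product events, so independence gives
\[
\mathbb P(U_i\ \text{rejected})=\sum_{r=0}^{m-1}\frac{c(k+r+1)}{1-c(k+r+1)}\,\mathbb P(U_i\ \text{not rejected},\ R_m=r).
\]
Sum over $i$ and use $c(k+r+1)/\{1-c(k+r+1)\}=(k+r+1)\alpha/(m-r)$. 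This yields the exact identity
\[
\mathbb E R_m=\alpha\,\mathbb E\bigl[(k+1+R_m)\mathbf 1\{R_m<m\}\bigr]\le\alpha(k+1+\mathbb E R_m),
\]
which rearranges to the lemma.

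Equivalently, $\{m-F(t)\}/(1-t)$ with $F(t)=\#\{j:U_j\le t\}$ is a martingale in $t$. Optional stopping at $t=c(k+R_m+1)$, where $F=R_m$, gives the same identity.
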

\begin{proof}
	See \citet{GhoshChakrabarti2026GBSABOS}.
\end{proof}

Lemma~\ref{lem:shifted-os} provides the principal ingredient for controlling the false-positive
component of the normalized Hamming risk. Unlike the order-statistic bounds
typically employed in the analysis of BH-type procedures, the present inequality
is tailored specifically to the shifted critical sequence of the GBS step-down
procedure. Consequently, it provides a direct characterization of the null
contribution without relying on threshold-localization arguments. Since the
proof is identical to that in \citet{GhoshChakrabarti2026GBSABOS}, it is omitted here.

\begin{lemma}[False-positive bound for the GBS procedure]
	\label{lem:false-positive}
	Under Assumptions~\ref{ass:sparse}--\ref{ass:alpha}, for every parameter vector
	$\theta\in\mathbb R^n$ satisfying $|S_\theta|=s_n$,
	\[
	\mathbb E_{\theta} V(\theta,\varphi^{\GBS})
	\le
	\frac{\alpha_n}{1-\alpha_n}(s_n+1).
	\]
	Consequently,
	\[
	\sup_{\theta:\, |S_\theta|=s_n}
	\frac{1}{s_n}
	\mathbb E_{\theta}V(\theta,\varphi^{\GBS})
	=o(1).
	\]
	In particular, the same conclusion holds uniformly over both
	$\Theta_b$ and $\Theta(a,s_n)$.
\end{lemma}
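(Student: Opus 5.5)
The plan is to reduce $\mathbb E_\theta V$ to a sum of probabilities of ordered-null events and then invoke Lemma~\ref{lem:shifted-os} with $m=n-s_n$ nulls and shift $k=s_n$. Write $m=n-s_n$ and let $U_{(1)}\le\cdots\le U_{(m)}$ be the ordered null $p$-values. Under independence these are order statistics of $m$ i.i.d.\ $U(0,1)$ variables, independent of the non-null $p$-values. Since GBS rejects an initial segment of the ordered $p$-values, it rejects exactly $V$ null hypotheses, and these are necessarily the $V$ smallest null $p$-values. We then write
\[
\mathbb E_\theta V=\sum_{j=1}^m \mathbb P_\theta(V\ge j).
\]

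The key step is a deterministic containment. We claim that
\[
\{V\ge j\}\subseteq\{U_{(\ell)}\le b^{(m)}_{\ell,s_n}(\alpha_n),\ \ell=1,\ldots,j\}.
\]
Suppose $V\ge j$ and fix $\ell\le j$. The $\ell$-th smallest null $p$-value $U_{(\ell)}$ is rejected, so it sits at some overall position $r$ in the full ordering with $r\le R_n^{\GBS}$. At most $s_n$ non-nulls precede it, hence $r\le \ell+s_n$. By the step-down definition \eqref{eq:gbs_rejection_number} we have $U_{(\ell)}=P_{(r)}\le c_r$. The map $j\mapsto c_j$ is increasing, because its numerator increases and its denominator $n+1-j(1-\alpha_n)$ decreases. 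Therefore
\[
U_{(\ell)}\le c_{\ell+s_n}=\frac{(s_n+\ell)\alpha_n}{n+1-(s_n+\ell)(1-\alpha_n)}.
\]
Substituting $n=m+s_n$, this equals exactly $b^{(m)}_{\ell,s_n}(\alpha_n)$. This is the one point needing care: the monotonicity of $c_j$, the bound $r\le\ell+s_n$, and the identification of the shifted constants. The index bookkeeping must match the lemma's $m+k+1$ exactly.

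Summing over $j$ and applying Lemma~\ref{lem:shifted-os} with $k=s_n$ and $\alpha=\alpha_n$ gives
\[
\mathbb E_\theta V\le \frac{\alpha_n}{1-\alpha_n}(s_n+1).
\]
This bound depends on $\theta$ only through $|S_\theta|=s_n$, so it holds for every $\theta$ with $|S_\theta|=s_n$. Dividing by $s_n$ gives
\[
\frac{1}{s_n}\mathbb E_\theta V\le \frac{\alpha_n}{1-\alpha_n}\Big(1+\frac{1}{s_n}\Big)\to0,
\]
since $\alpha_n\to0$ by Assumption~\ref{ass:alpha} and $s_n\to\infty$ by Assumption~\ref{ass:sparse}. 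Both $\Theta_b$ and $\Theta(a,s_n)$ consist of vectors with exactly $s_n$ nonzeros, so the uniform statement over these classes follows immediately.
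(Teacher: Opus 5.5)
Your proposal is correct and follows essentially the same route as the paper. Both arguments bound the overall rank of the $\ell$-th smallest null $p$-value by $s_n+\ell$, use the monotonicity of $c_j$ to get $\{V\ge j\}\subseteq\{U_{(\ell)}\le c_{s_n+\ell},\ \ell=1,\ldots,j\}$, identify $c_{s_n+\ell}$ with $b^{(n-s_n)}_{\ell,s_n}(\alpha_n)$, and sum via Lemma~\ref{lem:shifted-os}. The only difference is cosmetic: the paper conditions on the signal $p$-values, whereas you observe that the containment holds pointwise, so the bound follows directly from the law of the null order statistics.
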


Lemma~\ref{lem:false-positive} establishes that the false-positive contribution to the normalized Hamming risk is asymptotically negligible, uniformly over the beta-min class $\Theta_b$. The key point is that the expected number of false rejections is bounded by a term of order $\alpha_n \sn$, which becomes negligible after normalization by $\sn$ because $\alpha_n\to0$. Consequently, the sharp asymptotic minimax behavior of the GBS procedure is governed by its false-negative contribution. This reduction is essential: once the null contribution is controlled, the remaining task is to show that the GBS rejection path captures the correct asymptotic fraction of non-null coordinates. This is achieved through the deterministic and empirical signal-crossing arguments developed below.

\begin{proof}[{\bf Proof of Lemma~\ref{lem:false-positive}}]
	Fix $\theta\in\mathbb R^n$ such that $|S_\theta|=s_n$, and write
	$S=S_\theta$. Conditional on the signal $p$-values, there are
	$n-s_n$ null $p$-values, which are independent $U(0,1)$ random
	variables and are independent of the signal $p$-values. Let
	\[
	P^0_{(1)}\le \cdots \le P^0_{(n-s_n)}
	\]
	denote the ordered null $p$-values.
	
	Fix any $j=1,\ldots,n-s_n$, and suppose that the GBS procedure makes
	at least $j$ false rejections. Then the $j$ smallest null $p$-values
	must be rejected. For $\ell=1,\ldots,j$, let $R_\ell$ denote the
	overall rank of $P^0_{(\ell)}$ among all $n$ ordered $p$-values.
	Since there are exactly $s_n$ signal $p$-values, at most $s_n$ signal
	$p$-values can be smaller than $P^0_{(\ell)}$. Moreover, among the
	null $p$-values, exactly $\ell-1$ values are smaller than
	$P^0_{(\ell)}$. Hence $R_\ell\le s_n+\ell$. If the null hypothesis
	corresponding to $P^0_{(\ell)}$ is rejected, then, by the definition
	of the GBS step-down procedure,
	$P^0_{(\ell)}\le c_{R_\ell}$. Since the GBS critical constants are
	nondecreasing in the rank, it follows that
	$P^0_{(\ell)}\le c_{R_\ell}\le c_{s_n+\ell}$ for every
	$\ell=1,\ldots,j$. Therefore, conditional on the signal $p$-values,
	\begin{equation}
		\{V(\theta,\varphi^{\GBS})\ge j\}
		\subseteq
		\{P^0_{(\ell)}\le c_{s_n+\ell},\ \ell=1,\ldots,j\}.
		\label{eq:false-positive-event}
	\end{equation}
	Taking conditional probabilities in
	\eqref{eq:false-positive-event} gives
	\begin{equation}
		\mathbb P_\theta\!\left(
		V(\theta,\varphi^{\GBS})\ge j
		\,\middle|\,
		\{P_i:i\in S_\theta\}
		\right)
		\le
		\mathbb P_\theta\!\left(
		P^0_{(\ell)}\le c_{s_n+\ell},
		\ \ell=1,\ldots,j
		\,\middle|\,
		\{P_i:i\in S_\theta\}
		\right).
		\label{eq:false-positive-conditional}
	\end{equation}
	Since, conditional on the signal $p$-values, the ordered null
	$p$-values have the same joint distribution as the order statistics
	of $n-s_n$ independent $U(0,1)$ random variables, the right-hand
	side of \eqref{eq:false-positive-conditional} is equal to
	\begin{equation}
		\mathbb P\!\left(
		U_{(\ell)}\le c_{s_n+\ell},
		\ \ell=1,\ldots,j
		\right),
		\label{eq:false-positive-uniform}
	\end{equation}
	where $U_{(1)}\le\cdots\le U_{(n-s_n)}$ denote these uniform order
	statistics. Taking expectations with respect to the signal $p$-values in
	\eqref{eq:false-positive-conditional}, together with 	\eqref{eq:false-positive-uniform}, yields
	\begin{equation}
		\mathbb P_\theta\!\left(
		V(\theta,\varphi^{\GBS})\ge j
		\right)
		\le
		\mathbb P\!\left(
		U_{(\ell)}\le c_{s_n+\ell},
		\ \ell=1,\ldots,j
		\right).
		\label{eq:false-positive-unconditional}
	\end{equation}
	
	Now,
	\begin{equation}
		c_{s_n+\ell}
		=
		\frac{(s_n+\ell)\alpha_n}
		{n+1-(s_n+\ell)(1-\alpha_n)}
		=
		\frac{(s_n+\ell)\alpha_n}
		{(n-s_n)+s_n+1-(s_n+\ell)(1-\alpha_n)}.
		\label{eq:gbs-shifted-critical}
	\end{equation}
	Using
	\eqref{eq:false-positive-unconditional},
	\eqref{eq:gbs-shifted-critical},
	and Lemma~\ref{lem:shifted-os}, applied with
	$m=n-s_n$ and $k=s_n$, we obtain
	\begin{equation}
		\begin{aligned}
			\mathbb E_\theta V(\theta,\varphi^{\GBS})
			&=
			\sum_{j=1}^{n-s_n}
			\mathbb P_\theta\!\left(
			V(\theta,\varphi^{\GBS})\ge j
			\right) \\
			&\le
			\frac{\alpha_n}{1-\alpha_n}(s_n+1).
		\end{aligned}
		\label{eq:false-positive-expectation}
	\end{equation}
	Dividing both sides of
	\eqref{eq:false-positive-expectation} by $s_n$ gives
	\begin{equation}
		\frac{1}{s_n}
		\mathbb E_\theta V(\theta,\varphi^{\GBS})
		\le
		\frac{\alpha_n}{1-\alpha_n}
		\left(1+\frac{1}{s_n}\right).
		\label{eq:false-positive-normalized-bound}
	\end{equation}
	Since $\alpha_n\to0$ and $s_n\to\infty$, the right-hand side of
	\eqref{eq:false-positive-normalized-bound} is $o(1)$, uniformly over
	all $\theta\in\mathbb R^n$ satisfying $|S_\theta|=s_n$. This proves
	the lemma.
\end{proof}

We next turn to the false-negative contribution. For $\theta\in\Theta_b$, let
$Q_{(1)}\le\cdots\le Q_{(\sn)}$ denote the ordered signal $p$-values, and define
\[
N_1(t)
=
\sum_{i\in S_\theta}\mathbf 1\{P_i\le t\}
\]
to be the number of signal $p$-values below $t$. The key objective is to show that, for every $\eps\in(0,\Phi(b))$,
\[
\sup_{\theta\in\Theta_b}
\mathbb P_\theta
\left\{
R_n^{\GBS}
<
(\Phi(b)-\eps)\sn
\right\}
\to0 \qquad \textrm{as } n\to\infty.
\]
Together with the asymptotic negligibility of the false-positive contribution,
this implies that, uniformly over $\Theta_b$, the GBS procedure misses at most a
$(\bar\Phi(b)+\eps)$ fraction of the signals asymptotically. The following elementary
monotonicity lemma will be used to propagate a deterministic signal-crossing bound
from one point of the GBS critical sequence to an entire range of ranks.

\begin{lemma}[Monotonicity of conditional tail averages]
	\label{lem:conditional_tail_average}
	Let $Y$ be a random variable with density $f$ supported on $[0,\infty)$, and let
	$g:[0,\infty)\to\mathbb R$ be nondecreasing. Suppose that the following integrals are finite and that
	\[
	B(z):=\int_z^\infty f(y)\,dy>0,\qquad z\ge0.
	\]
	Define
	\begin{equation}
		M(z)
		=
		\frac{\int_z^\infty g(y)f(y)\,dy}
		{\int_z^\infty f(y)\,dy},
		\qquad z\ge0.
		\label{eq:conditional_tail_average}
	\end{equation}
	Then $M(z)$ is nondecreasing in $z$.
\end{lemma}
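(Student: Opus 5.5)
The plan is to differentiate $M$ and read off the sign, with a fallback argument in case $f$ or $g$ is not regular enough to differentiate. Write $A(z)=\int_z^\infty g(y)f(y)\,dy$, so that $M=A/B$ with $B(z)>0$. At any Lebesgue point $z$ of $f$ (almost every $z$) we have $A'(z)=-g(z)f(z)$ and $B'(z)=-f(z)$. The quotient rule then gives
\[
M'(z)=\frac{-g(z)f(z)B(z)+f(z)A(z)}{B(z)^2}
=\frac{f(z)}{B(z)}\bigl\{M(z)-g(z)\bigr\}.
\]
Since $g$ is nondecreasing, $g(y)\ge g(z)$ for every $y\ge z$. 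Averaging this inequality against the probability density $f(y)\mathbf 1\{y\ge z\}/B(z)$ gives $M(z)\ge g(z)$. Hence $M'(z)\ge0$ almost everywhere.

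This step is where the real care is needed. An almost-everywhere nonnegative derivative only implies monotonicity once we know $M$ is absolutely continuous. Here $A$ and $B$ are integrals of $L^1$ functions, so both are absolutely continuous on compact intervals. Also $B$ is bounded below by $B(z_2)>0$ on $[z_1,z_2]$, so $M=A/B$ is absolutely continuous there, and the conclusion $M(z_1)\le M(z_2)$ follows.

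To avoid the calculus entirely, I would instead give a direct argument. Fix $z_1<z_2$ and split $[z_1,\infty)=[z_1,z_2)\cup[z_2,\infty)$. This exhibits $M(z_1)$ as a convex combination
\[
M(z_1)=\lambda\,m_{12}+(1-\lambda)\,M(z_2),
\qquad \lambda=\frac{B(z_1)-B(z_2)}{B(z_1)},
\]
where $m_{12}$ is the $f$-weighted average of $g$ over $[z_1,z_2)$ (when $\lambda>0$). By monotonicity of $g$, $m_{12}\le g(z_2)$, since $g(y)\le g(z_2)$ for $y<z_2$. On the other hand $M(z_2)\ge g(z_2)$, since $g(y)\ge g(z_2)$ for $y\ge z_2$. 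Thus $m_{12}\le M(z_2)$, and the convex combination gives $M(z_1)\le M(z_2)$. If $\lambda=0$ the two averages coincide and there is nothing to prove.

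I would present this second argument as the proof, because it needs only the finiteness of the integrals and $B>0$. The only point to verify is that the finiteness hypothesis makes each piece well defined. This holds because $\int_{z_1}^{z_2}|g|f\le\int_{z_1}^\infty |g|f$, which is finite once the hypothesis is read as absolute integrability of $gf$ (monotonicity of $g$ makes this automatic on the upper tail whenever the integral itself is finite).
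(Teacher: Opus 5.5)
Your direct (second) argument is correct and is essentially the paper's proof. The paper makes the same split of $[z_1,\infty)$ at $z_2$, writing $B(z_1)=B(z_2)+C$ and $A(z_1)=A(z_2)+D$ with $C=\int_{z_1}^{z_2}f$ and $D=\int_{z_1}^{z_2}gf$, and reduces the claim to $D\,B(z_2)\le C\,A(z_2)$, which is your inequality $m_{12}\le M(z_2)$ in cross-multiplied form; it proves this by integrating $g(y)\le g(u)$ against $f(y)f(u)$ over $[z_1,z_2]\times[z_2,\infty)$, whereas you go through the pivot value $g(z_2)$ (namely $D\le g(z_2)C$ and $A(z_2)\ge g(z_2)B(z_2)$). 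Your pivot route avoids the product integral, where the paper's appeal to Tonelli for a possibly negative $g$ really needs the absolute integrability of $gf$; your remark that finiteness of $\int gf$ already gives $\int|g|f<\infty$ for nondecreasing $g$ supplies exactly that.
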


Lemma~\ref{lem:conditional_tail_average} provides the principal analytical tool underlying the deterministic signal-crossing analysis developed below. In the present context, it will be applied to suitable conditional tail expectations associated with the Gaussian alternative distribution to establish the monotonicity of the signal distribution relative to the GBS critical sequence. This monotonicity property plays a crucial role in showing that, once the ordered signal $p$-values cross the GBS critical values at a suitable location, the crossing behavior propagates throughout the relevant range of ranks. Consequently, Lemma~\ref{lem:conditional_tail_average} forms the key analytical ingredient in the deterministic characterization of the GBS rejection path.

\begin{proof}[{\bf Proof of Lemma~\ref{lem:conditional_tail_average}}]
	Fix $0\le z_1<z_2$. Define
	\begin{equation}
		A_j
		=
		\int_{z_j}^{\infty} g(y)f(y)\,dy,
		\qquad
		B_j
		=
		\int_{z_j}^{\infty} f(y)\,dy,
		\qquad j=1,2.
		\label{eq:tail_average_Aj_Bj}
	\end{equation}
	Also define
	\begin{equation}
		C
		=
		\int_{z_1}^{z_2} f(y)\,dy,
		\qquad
		D
		=
		\int_{z_1}^{z_2} g(y)f(y)\,dy.
		\label{eq:tail_average_C_D}
	\end{equation}
	By construction,
	\begin{equation}
		B_1=B_2+C,
		\qquad
		A_1=A_2+D.
		\label{eq:A_B_decomposition}
	\end{equation}
	Since $B(z)>0$ for every $z\ge0$, we have
	\begin{equation}
		B_1>0,
		\qquad
		B_2>0.
		\label{eq:B_positive}
	\end{equation}
	Therefore, to prove that $M(z_1)\le M(z_2)$, it is enough to show
	\begin{equation}
		\frac{A_1}{B_1}
		\le
		\frac{A_2}{B_2}.
		\label{eq:tail_average_goal_ratio}
	\end{equation}
	Using \eqref{eq:A_B_decomposition}, the desired inequality
	\eqref{eq:tail_average_goal_ratio} is equivalent to
	\begin{equation}
		\frac{A_2+D}{B_2+C}
		\le
		\frac{A_2}{B_2}.
		\label{eq:tail_average_equiv_ratio}
	\end{equation}
	By \eqref{eq:B_positive}, \eqref{eq:tail_average_equiv_ratio} is equivalent to
	\begin{align}
		(A_2+D)B_2
		&\le
		A_2(B_2+C)
		\nonumber\\
		\Longleftrightarrow\qquad
		DB_2
		&\le
		C A_2.
		\label{eq:tail_average_DB_AC}
	\end{align}
	Thus it remains to prove \eqref{eq:tail_average_DB_AC}.
	
	Since $g$ is nondecreasing, for every $y\in[z_1,z_2]$ and every
	$u\in[z_2,\infty)$,
	\begin{equation}
		g(y)\le g(u).
		\label{eq:g_monotone_y_u}
	\end{equation}
	Because $f$ is a density, $f(y)f(u)\ge0$. Multiplying both sides of
	\eqref{eq:g_monotone_y_u} by $f(y)f(u)$ gives
	\begin{equation}
		g(y)f(y)f(u)
		\le
		g(u)f(y)f(u).
		\label{eq:pointwise_product_ineq}
	\end{equation}
	Integrating both sides of \eqref{eq:pointwise_product_ineq} over
	$(y,u)\in[z_1,z_2]\times[z_2,\infty)$ yields
	\begin{align}
		\int_{z_1}^{z_2}\int_{z_2}^{\infty}
		g(y)f(y)f(u)\,du\,dy
		&\le
		\int_{z_1}^{z_2}\int_{z_2}^{\infty}
		g(u)f(y)f(u)\,du\,dy.
		\label{eq:double_integral_ineq}
	\end{align}
	By Tonelli's theorem, the left-hand side of
	\eqref{eq:double_integral_ineq} equals
	\begin{align}
		\int_{z_1}^{z_2}\int_{z_2}^{\infty}
		g(y)f(y)f(u)\,du\,dy
		&=
		\left\{
		\int_{z_1}^{z_2} g(y)f(y)\,dy
		\right\}
		\left\{
		\int_{z_2}^{\infty} f(u)\,du
		\right\}
		\nonumber\\
		&=
		DB_2.
		\label{eq:left_double_DB}
	\end{align}
	Similarly, the right-hand side of \eqref{eq:double_integral_ineq} equals
	\begin{align}
		\int_{z_1}^{z_2}\int_{z_2}^{\infty}
		g(u)f(y)f(u)\,du\,dy
		&=
		\left\{
		\int_{z_1}^{z_2} f(y)\,dy
		\right\}
		\left\{
		\int_{z_2}^{\infty} g(u)f(u)\,du
		\right\}
		\nonumber\\
		&=
		CA_2.
		\label{eq:right_double_CA}
	\end{align}
	Combining \eqref{eq:double_integral_ineq},
	\eqref{eq:left_double_DB}, and \eqref{eq:right_double_CA}, we obtain
	\begin{equation}
		DB_2\le CA_2.
		\label{eq:DB_leq_CA_final}
	\end{equation}
	Hence \eqref{eq:tail_average_DB_AC} holds, and therefore
	\eqref{eq:tail_average_goal_ratio} follows. Thus
	\begin{equation}
		M(z_1)\le M(z_2).
		\label{eq:M_z1_leq_M_z2}
	\end{equation}
	Since $0\le z_1<z_2$ were arbitrary, $M$ is nondecreasing on $[0,\infty)$.
\end{proof}

\begin{lemma}[Initial crossing]
	\label{lem:initial-crossing}
	Assume that
	\begin{equation}
		s_n\to\infty,
		\qquad
		\frac{n}{s_n}\to\infty, \qquad \textrm{as } n\to\infty,
		\label{eq:initial_sparse_regime}
	\end{equation}
	and that
	\begin{equation}
		\alpha_n\to0,
		\qquad
		\log(1/\alpha_n)=o(a_n),
		\qquad
		a_n=\sqrt{2\log(n/s_n)}.
		\label{eq:initial_alpha_condition}
	\end{equation}
	Then
	\begin{equation}
		\inf_{\theta\in\Theta_b}
		\mathbb E_{\theta}N_1(c_1)
		\to\infty \qquad \textrm{as } n\to\infty,
		\label{eq:initial_crossing_conclusion}
	\end{equation}
	where
	\begin{equation}
		N_1(t)
		=
		\sum_{i\in S_\theta}1\{P_i\le t\},
		\qquad
		c_1=\frac{\alpha_n}{n+\alpha_n}.
		\label{eq:N1_c1_initial}
	\end{equation}
\end{lemma}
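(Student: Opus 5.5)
The plan is to reduce the claim to a one-dimensional Gaussian tail estimate and then compare the GBS first critical point with the beta-min boundary using Mills-ratio bounds.

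\textbf{Step 1: reduction to a single coordinate.} Write $a_n=\sqrt{2\log(n/s_n)}$ and let $t_1>0$ be defined by $2\bar\Phi(t_1)=c_1$, so that $\{P_i\le c_1\}=\{|X_i|\ge t_1\}$. For $|\theta_i|=\mu$ we have
\[
\mathbb P_\theta(|X_i|\ge t_1)=\bar\Phi(t_1-\mu)+\Phi(-t_1-\mu).
\]
This quantity is nondecreasing in $\mu\ge 0$. For $n$ large we have $a_n+b>0$, and on $\Theta_b$ every signal coordinate has $|\theta_i|\ge a_n+b$. Hence
\[
\inf_{\theta\in\Theta_b}\mathbb E_\theta N_1(c_1)\ \ge\ s_n\,\bar\Phi(d_n-b),
\qquad d_n:=t_1-a_n .
\]
If $d_n-b\le 0$, this is at least $s_n/2\to\infty$. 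So it suffices to treat $x_n:=d_n-b>0$ and prove that $s_n\bar\Phi(x_n)\to\infty$.

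\textbf{Step 2: exact algebra and Mills bounds.} The key identity is $d_n^2=t_1^2-a_n^2-2a_nd_n$, which gives
\[
\tfrac12(d_n-b)^2=\tfrac12(t_1^2-a_n^2)-(a_n+b)d_n+\tfrac{b^2}{2}.
\]
The two Mills-ratio bounds I will use are
\[
\frac{x\,\phi(x)}{1+x^2}\ \le\ \bar\Phi(x)\ \le\ \frac{\phi(x)}{x}.
\]
\begin{itemize}
\item Applying the upper bound at $t_1$ gives $t_1^2/2\le\log(2/c_1)-\log t_1+O(1)=\log n+\log(1/\alpha_n)-\log t_1+O(1)$.
\item Using $a_n^2/2=\log(n/s_n)$, this yields $\tfrac12(t_1^2-a_n^2)\le\log s_n+\log(1/\alpha_n)-\log t_1+O(1)$.
\end{itemize}
Inserting this into the identity and then into the Mills lower bound for $\bar\Phi(x_n)$, the $\log s_n$ terms cancel, and I expect
\[
\log\{s_n\bar\Phi(x_n)\}\ \ge\ (a_n+b)d_n-\log(1/\alpha_n)+\log t_1-\log(1+x_n)+O(1).
\]
Since $x_n\le t_1+|b|$, the term $\log t_1-\log(1+x_n)$ is bounded below. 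The problem is therefore to show $(a_n+b)d_n-\log(1/\alpha_n)\to\infty$.

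\textbf{Step 3: lower bound on $d_n$.} Apply the Mills lower bound at $t_1$, giving $t_1^2/2\ge \log(2/c_1)-\log t_1-O(1)$. Hence
\[
t_1^2-a_n^2\ \ge\ 2\log s_n+2\log(1/\alpha_n)-2\log t_1-O(1),
\qquad
d_n=\frac{t_1^2-a_n^2}{t_1+a_n}.
\]
I then split into two cases.
\begin{itemize}
\item If $t_1\ge(1+\delta)a_n$ for a fixed $\delta>0$, then $d_n\ge\delta a_n$. So $(a_n+b)d_n\gtrsim a_n^2$, which dominates $\log(1/\alpha_n)=o(a_n)$.
\item If $t_1=a_n(1+o(1))$, then $(a_n+b)d_n=\tfrac12(t_1^2-a_n^2)(1+o(1))$. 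The main term $\log s_n+\log(1/\alpha_n)$ survives after subtracting $\log(1/\alpha_n)$, up to an $o(\log(1/\alpha_n))$ error (using $\log(1/\alpha_n)=o(a_n)$ together with $b\,d_n=O(\log(1/\alpha_n)/a_n)$), leaving $\log s_n\to\infty$.
\end{itemize}
Intermediate regimes are handled by a subsequence argument along the ratio $t_1/a_n$.

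\textbf{Main obstacle.} The delicate point is the bookkeeping of the $\log t_1$ (that is, $\log\log n$-order) corrections in the near-boundary case $t_1\sim a_n$. Since $s_n\to\infty$ may be arbitrarily slow, these terms must cancel exactly rather than be absorbed by $\log s_n$. To achieve this, I would keep the Mills upper and lower bounds paired consistently: the $-\log t_1$ from the bound on $t_1$ and the $-\log x_n$ from $\bar\Phi(x_n)$ have to offset, with $x_n\le t_1+|b|$ guaranteeing that they do. In the same case, the $(1+o(1))$ factor multiplying $\log s_n$ must not destroy divergence. For this I would use the exact form $a_nd_n=\frac{a_n}{t_1+a_n}(t_1^2-a_n^2)$, with $a_n/(t_1+a_n)\ge\frac12-O(d_n/a_n)$ and $d_n/a_n$ controlled by $(\log s_n+\log(1/\alpha_n))/a_n^2$.
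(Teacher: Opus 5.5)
\textbf{Gap: the reduction at the end of Step~2 is to a false claim.} Your overall plan matches the paper's: reduce to $s_n\bar\Phi(x_n)$, then apply Mills bounds. The problem is what you do with the valid bound
$\log\{s_n\bar\Phi(x_n)\}\ge (a_n+b)d_n-\log(1/\alpha_n)+\log t_1-\log(1+x_n)+O(1)$.
You discard $\log t_1-\log(1+x_n)$ as merely bounded below and then aim to show $(a_n+b)d_n-\log(1/\alpha_n)\to\infty$. That target can fail.

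Your two Mills bounds at $t_1$ give $t_1^2-a_n^2=2\log s_n+2\log(1/\alpha_n)-2\log t_1+O(1)$, with $2\log t_1=\log\log n+O(1)$. Take $b=-1$, $s_n=\lfloor\log\log n\rfloor$, $\alpha_n=1/\log\log n$; all hypotheses hold. Then:
\begin{itemize}
\item $t_1^2-a_n^2\to-\infty$, so $d_n<0$ and $d_n\to0$;
\item hence $x_n\to1>0$, so your Step~1 shortcut does not apply;
\item and $(a_n+b)d_n-\log(1/\alpha_n)\to-\infty$,
\end{itemize}
even though $s_n\bar\Phi(x_n)\asymp\log\log n\to\infty$.

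The same slip appears in Step~3, case~2. The lower bound you write there contains $-2\log t_1$, but ``leaving $\log s_n$'' silently drops it. What actually survives is essentially $\log s_n-\log t_1$, which tends to $-\infty$ when $s_n=o(\sqrt{\log n})$.

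Your ``main obstacle'' paragraph senses the issue but pairs the wrong terms. The $-\log t_1$ from the Mills lower bound at $t_1$ (Step~3) and the $-\log(1+x_n)$ from the bound on $\bar\Phi(x_n)$ have the same sign, so they cannot offset. The only $+\log t_1$ available comes from the Mills upper bound at $t_1$, and you spent it in Step~2 on $\log(1+x_n)$, which is negligible in the near-boundary case anyway.

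\textbf{How to repair it.} Spend that $+\log t_1$ on Step~3's $-\log t_1$ instead. The two match up to the factor $2(a_n+b)/(t_1+a_n)=1+O\{(|b|+|d_n|)/a_n\}$, so the mismatch is $o(\log s_n)+o(1)$. Then control $\log(1+x_n)$ by the true size of $x_n$, not by $x_n\le t_1+|b|$: use $x_n\le|b|+|d_n|$ with $|d_n|=O\{(\log s_n+\log(1/\alpha_n))/a_n\}+o(1)$. Also use $\bar\Phi(x)\ge c\,\phi(x)/(1+x)$ rather than $x\phi(x)/(1+x^2)$, since the latter degenerates as $x\downarrow0$.

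The paper avoids this bookkeeping entirely. The $\log t_1$ correction moves $t_1$ itself only by $O(\log\log n/\sqrt{\log n})=o(1)$. With $L_n=\log(n/s_n)$, $S_n=\log s_n$, $A_n=\log(1/\alpha_n)$, this gives $t_1=\sqrt{2(L_n+S_n+A_n)}+o(1)$. Hence $x_n=\Delta_n-b+o(1)$ with $\Delta_n=\sqrt{2(L_n+S_n+A_n)}-\sqrt{2L_n}$, and the exact identity $S_n-\Delta_n^2/2=\sqrt{2L_n}\Delta_n-A_n$ finishes the argument. This route never writes $\tfrac{1}{2}x_n^2$ as a difference of two quantities that each carry the $\log\log n$ term.
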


Lemma~\ref{lem:initial-crossing} establishes that, asymptotically, the expected number of signal $p$-values below the first GBS critical value diverges uniformly over the beta-min class $\Theta_b$. Thus, despite the extreme conservativeness of the initial critical value, sufficiently many signals are expected to lie below it. This result provides the first indication that the GBS critical sequence and the ordered signal $p$-values necessarily intersect in the sparse asymptotic regime. It therefore serves as the starting point of the deterministic signal-crossing analysis developed in the subsequent results.

\begin{proof}[{\bf Proof of Lemma~\ref{lem:initial-crossing}}]
	Fix $\theta\in\Theta_b$. Under the sparsity assumption, $a_n\to\infty$ as $n\to\infty$. Hence,
	$a_n+b>0$ for all sufficiently large $n$.
	For $0<t<1$, let
	\begin{equation}
		z_t=\Phi^{-1}(1-t/2).
		\label{eq:zt_initial_def}
	\end{equation}
If $X\sim N(\mu,1)$, define
\begin{equation}
	G_\mu(t)
	=
	\mathbb P_\mu\{2\bar\Phi(|X|)\le t\},
	\label{eq:Gmu_initial_def}
\end{equation}
that is, $G_\mu$ denotes the cumulative distribution function of the two-sided $p$-value for testing $H_0:\mu=0$ versus $H_1:\mu\neq0$. Then
	\begin{align}
		G_\mu(t)
		&=
		\mathbb P_\mu(|X|\ge z_t)
		\nonumber\\
		&=
		\bar\Phi(z_t-\mu)+\bar\Phi(z_t+\mu).
		\label{eq:Gmu_initial_representation}
	\end{align}
By symmetry,
\[
G_\mu(t)
=
\mathbb P_\mu(|X|\ge z_t)
=
\bar\Phi(z_t-\mu)
+
\bar\Phi(z_t+\mu),
\]
and hence $G_{-\mu}(t)=G_\mu(t)$. Therefore, it suffices to consider
$\mu\ge0$. For $\mu\ge0$,
\begin{align}
	\frac{\partial}{\partial\mu}G_\mu(t)
	&=
	\phi(z_t-\mu)-\phi(z_t+\mu)
	\ge0,
	\label{eq:Gmu_initial_monotone}
\end{align}
because $z_t\ge0$, $\mu\ge0$, and consequently
$|z_t-\mu|\le z_t+\mu$. Hence $G_\mu(t)$ is nondecreasing as a function of $|\mu|$.

Since $G_\mu(t)$ is symmetric in $\mu$ and nondecreasing as a function of
$|\mu|$, and since $|\theta_i|\ge a_n+b$ for every $i\in S_\theta$, we have
\[
G_{\theta_i}(t)
=
G_{|\theta_i|}(t)
\ge
G_{a_n+b}(t),
\qquad i\in S_\theta.
\]
Therefore,
\begin{align}
	\mathbb E_\theta N_1(c_1)
	&=
	\sum_{i\in S_\theta}
	\mathbb P_{\theta_i}(P_i\le c_1)
	=
	\sum_{i\in S_\theta}
	G_{\theta_i}(c_1)
	\ge
	s_nG_{a_n+b}(c_1).
	\label{eq:initial_expectation_lower_1}
\end{align} 
Using \eqref{eq:Gmu_initial_representation},
\begin{align}
	G_{a_n+b}(c_1)
	&=
	\bar\Phi(z_{c_1}-a_n-b)
	+
	\bar\Phi(z_{c_1}+a_n+b)
	\nonumber\\
	&\ge
	\bar\Phi(z_{c_1}-a_n-b).
	\label{eq:G_initial_one_tail}
\end{align}

Therefore,
\begin{equation}
	\mathbb E_{\theta}N_1(c_1)
	\ge
	s_n\bar\Phi(z_{c_1}-a_n-b).
	\label{eq:initial_expectation_lower_2}
\end{equation}
It remains to show that the right-hand side of
\eqref{eq:initial_expectation_lower_2} diverges. Put
\[
L_n=\log(n/s_n),\qquad
S_n=\log s_n,\qquad
A_n=\log(1/\alpha_n).
\]
Then $a_n=\sqrt{2L_n}$, $L_n\to\infty$, $S_n\to\infty$, and
$A_n=o(\sqrt{L_n})$. Since $c_1=\alpha_n/(n+\alpha_n)$ and
$z_{c_1}=\bar\Phi^{-1}(c_1/2)$, the Gaussian quantile expansion gives
\[
z_{c_1}
=
\sqrt{2(L_n+S_n+A_n)}+o(1).
\]
Let
\[
\Delta_n
=
\sqrt{2(L_n+S_n+A_n)}-\sqrt{2L_n}.
\]
Then
\[
z_{c_1}-a_n-b=\Delta_n-b+o(1).
\]
By Mills' lower bound,
\[
s_n\bar\Phi(z_{c_1}-a_n-b)
\ge
C\exp\left\{
S_n-\frac{1}{2}(\Delta_n-b+o(1))_+^2
-\log\bigl(1+(\Delta_n-b+o(1))_+\bigr)
\right\}
\]
for a universal constant $C>0$. Thus it is enough to show that
\[
S_n-\frac{\Delta_n^2}{2}
-C(1+\Delta_n)-\log(1+\Delta_n)\to\infty
\]
for any fixed constant $C>0$. Now
\[
\frac{\Delta_n^2}{2}
=
S_n+A_n-\sqrt{2L_n}\Delta_n,
\]
and therefore
\[
S_n-\frac{\Delta_n^2}{2}
=
\sqrt{2L_n}\Delta_n-A_n.
\]
Moreover,
\[
\sqrt{2L_n}\Delta_n
=
\frac{2\sqrt{L_n}(S_n+A_n)}
{\sqrt{L_n+S_n+A_n}+\sqrt{L_n}}.
\]
Since $S_n\to\infty$ and $A_n=o(\sqrt{L_n})$, the last display implies
\[
\sqrt{2L_n}\Delta_n-A_n
-
C(1+\Delta_n)-\log(1+\Delta_n)
\to\infty.
\]
Consequently,
\[
s_n\bar\Phi(z_{c_1}-a_n-b)\to\infty.
\]
Combining this with \eqref{eq:initial_expectation_lower_2} gives
\[
\inf_{\theta\in\Theta_b}
\mathbb E_\theta N_1(c_1)\to\infty.
\]
This concludes the proof of Lemma~\ref{lem:initial-crossing}.
\end{proof}

\begin{lemma}[Folded-normal tail-ratio monotonicity]
	\label{lem:folded_normal_tail_ratio}
	For every $\mu>0$, the function
	\begin{equation}
		z\mapsto
		\widetilde R_\mu(z)
		=
		\frac{\bar\Phi(z-\mu)+\bar\Phi(z+\mu)}
		{2\bar\Phi(z)}
		\label{eq:folded_ratio}
	\end{equation}
	is nondecreasing on $[0,\infty)$.
\end{lemma}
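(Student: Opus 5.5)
Since the paper has just proved Lemma~\ref{lem:conditional_tail_average}, the plan is to write $\widetilde R_\mu$ as a conditional tail average of a nondecreasing function and apply that lemma.

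\textbf{Step 1 (likelihood ratio).} The folded density of $|X|$ under $X\sim N(0,1)$ is $f_0(y)=2\phi(y)$ on $[0,\infty)$. Under $X\sim N(\mu,1)$ it is $f_\mu(y)=\phi(y-\mu)+\phi(y+\mu)$. Their ratio is
\[
g(y)=\frac{f_\mu(y)}{f_0(y)}=\tfrac12\bigl(e^{\mu y-\mu^2/2}+e^{-\mu y-\mu^2/2}\bigr)=e^{-\mu^2/2}\cosh(\mu y).
\]
For $\mu>0$ this function is nondecreasing on $[0,\infty)$.

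\textbf{Step 2 (tail-average representation).} Take $Y$ to have density $f=f_0$ on $[0,\infty)$. Then $B(z)=\int_z^\infty f_0=2\bar\Phi(z)>0$ for every $z\ge0$. Also
\[
\int_z^\infty g f_0\,dy=\int_z^\infty f_\mu(y)\,dy=\bar\Phi(z-\mu)+\bar\Phi(z+\mu).
\]
All these integrals are finite, since $g f_0=f_\mu$ is integrable. Hence $\widetilde R_\mu(z)$ equals exactly the quantity $M(z)$ of \eqref{eq:conditional_tail_average}.

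\textbf{Step 3 (conclusion).} Lemma~\ref{lem:conditional_tail_average} now gives that $M$, and therefore $\widetilde R_\mu$, is nondecreasing on $[0,\infty)$.

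The only real point is spotting the identity $\int_z^\infty g f_0=\bar\Phi(z-\mu)+\bar\Phi(z+\mu)$, which is a change of variables in each Gaussian piece. I do not expect any step to be an obstacle. As a fallback, one can differentiate directly: $\widetilde R_\mu'\ge0$ is equivalent to $(\phi(z-\mu)+\phi(z+\mu))\bar\Phi(z)\le \phi(z)(\bar\Phi(z-\mu)+\bar\Phi(z+\mu))$, i.e.\ $g(z)\le M(z)$. This holds because $M(z)$ averages $g$ over $[z,\infty)$ and $g$ is monotone. The derivative route is the same argument in infinitesimal form.
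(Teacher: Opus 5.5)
Your proposal is correct and matches the paper's proof: the paper also computes the likelihood ratio $f_\mu/f_0=e^{-\mu^2/2}\cosh(\mu y)$, writes $\widetilde R_\mu(z)$ as the conditional tail average $M(z)$ with $f=f_0$ and $g=g_\mu$, and applies Lemma~\ref{lem:conditional_tail_average}. Your derivative-based fallback, which reduces $\widetilde R_\mu'\ge 0$ to $g(z)\le M(z)$, is also valid but not needed.
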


Lemma~\ref{lem:folded_normal_tail_ratio} specializes the general monotonicity principle of Lemma~\ref{lem:conditional_tail_average} to the folded-normal distribution arising under the Gaussian sequence model. The monotonicity of the tail-ratio function $\widetilde R_\mu$ implies that the relative enrichment of signal $p$-values over null $p$-values increases monotonically with the threshold. This structural property forms the key probabilistic ingredient in the deterministic signal-crossing analysis, as it guarantees that once the ordered signal $p$-values intersect the GBS critical sequence, the crossing propagates in a controlled manner over the relevant range of ranks. Consequently, Lemma~\ref{lem:folded_normal_tail_ratio} provides the Gaussian-specific monotonicity property underlying the subsequent deterministic crossing theorem.

\begin{proof}[{\bf Proof of Lemma~\ref{lem:folded_normal_tail_ratio}}]
	Let $Y=|Z|$, where $Z\sim N(0,1)$. Then $Y$ has density
	\begin{equation}
		f_0(y)=2\phi(y)\mathbf 1_{\{y\ge0\}}.
		\label{eq:folded_null_density}
	\end{equation}
	If $X\sim N(\mu,1)$, then $|X|$ has the folded normal density
	\begin{equation}
		f_\mu(y)=\{\phi(y-\mu)+\phi(y+\mu)\}\mathbf 1_{\{y\ge0\}}.
		\label{eq:folded_signal_density}
	\end{equation}
	
	For $y\ge0$, using \eqref{eq:folded_null_density} and
	\eqref{eq:folded_signal_density}, we obtain
	\begin{align}
		\frac{f_\mu(y)}{f_0(y)}
		&=
		\frac{\phi(y-\mu)+\phi(y+\mu)}
		{2\phi(y)}
		\nonumber\\
		&=
		\frac{
			\exp\{\mu y-\mu^2/2\}
			+
			\exp\{-\mu y-\mu^2/2\}
		}{2}
		\nonumber\\
		&=
		\exp\{-\mu^2/2\}\cosh(\mu y).
		\label{eq:folded_lr}
	\end{align}
	Since $\mu>0$,
	\begin{equation}
		\frac{d}{dy}
		\left[
		\exp\{-\mu^2/2\}\cosh(\mu y)
		\right]
		=
		\mu\exp\{-\mu^2/2\}\sinh(\mu y)
		\ge0,
		\qquad y\ge0.
		\label{eq:folded_lr_monotone}
	\end{equation}
	Thus
	\begin{equation}
		g_\mu(y):=\frac{f_\mu(y)}{f_0(y)}
		\label{eq:gmu_definition}
	\end{equation}
	is nondecreasing on $[0,\infty)$.
	
	Next, for every $z\ge0$,
	\begin{align}
		\int_z^\infty f_\mu(y)\,dy
		&=
		\bar\Phi(z-\mu)+\bar\Phi(z+\mu),
		\label{eq:folded_signal_tail}
	\end{align}
	whereas
	\begin{equation}
		\int_z^\infty f_0(y)\,dy
		=
		2\bar\Phi(z).
		\label{eq:folded_null_tail}
	\end{equation}
	Therefore,
	\begin{equation}
		\widetilde R_\mu(z)
		=
		\frac{\int_z^\infty f_\mu(y)\,dy}
		{\int_z^\infty f_0(y)\,dy}.
		\label{eq:folded_ratio_tail_integrals}
	\end{equation}
	Since $f_\mu=g_\mu f_0$ on $[0,\infty)$, we may rewrite
	\eqref{eq:folded_ratio_tail_integrals} as
	\begin{align}
		\widetilde R_\mu(z)
		&=
		\frac{
			\int_z^\infty g_\mu(y) f_0(y)\,dy
		}
		{
			\int_z^\infty f_0(y)\,dy
		}.
		\label{eq:folded_ratio_conditional_average}
	\end{align}
	The denominator is strictly positive for every $z\ge0$, and the
	numerator is finite since $g_\mu f_0=f_\mu$ is a density on
	$[0,\infty)$. Lemma~\ref{lem:conditional_tail_average}, applied with
	$f=f_0$ and $g=g_\mu$, now implies that the right-hand side of
	\eqref{eq:folded_ratio_conditional_average} is nondecreasing in $z$.
	Thus $\widetilde R_\mu(z)$ is nondecreasing on $[0,\infty)$.
\end{proof}

Having established the analytical ingredients required for the signal-crossing analysis, we now prove the central deterministic crossing result that identifies the asymptotic location at which the signal process overtakes the GBS critical sequence.

\begin{lemma}[Deterministic signal crossing]
	\label{lem:deterministic_signal_crossing}
	Assume that
	\begin{equation}
		s_n\to\infty,
		\qquad
		\frac{n}{s_n}\to\infty,
		\label{eq:sparse_regime_crossing}
	\end{equation}
	and that the GBS nominal level satisfies
	\begin{equation}
		\alpha_n\to0,
		\qquad
		\log(1/\alpha_n)=o(a_n),
		\qquad
		a_n=\sqrt{2\log(n/s_n)}.
		\label{eq:alpha_condition_crossing}
	\end{equation}
	Fix $b\in\mathbb R$, and define
	\begin{equation}
		q_b=\Phi(b).
		\label{eq:qb_definition}
	\end{equation}
	For every $\varepsilon\in(0,q_b)$, there exist
	$\eta_\varepsilon>0$ and $n_\varepsilon\ge1$ such that, for all
	$n\ge n_\varepsilon$ and every $\theta\in\Theta_b$,
	\begin{equation}
		G_{\theta_i}\!\left(
		q\alpha_n\frac{s_n}{n}
		\right)
		\ge
		(1+\eta_\varepsilon)q,
		\qquad
		i\in S_\theta,
		\label{eq:deterministic_crossing_conclusion}
	\end{equation}
	uniformly over
	\begin{equation}
		0<q\le q_b-\varepsilon.
		\label{eq:q_range_crossing}
	\end{equation}
\end{lemma}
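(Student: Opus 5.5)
The plan is to reduce everything to a single worst-case signal strength and a single value of $q$, and then use a Gaussian quantile expansion at that one point. Set $t_q=q\alpha_n s_n/n$.

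\emph{Step 1: worst-case signal.} As shown in the proof of Lemma~\ref{lem:initial-crossing}, $G_\mu(t)$ is symmetric in $\mu$ and nondecreasing in $|\mu|$. Every $\theta\in\Theta_b$ has $|\theta_i|\ge a_n+b=:\mu_n$ for $i\in S_\theta$. Hence it suffices to prove \eqref{eq:deterministic_crossing_conclusion} with $G_{\theta_i}$ replaced by $G_{\mu_n}$. Here $\mu_n>0$ for large $n$, and $\mu_n$ does not depend on $\theta$, so uniformity over $\Theta_b$ is automatic.

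\emph{Step 2: reduction to the endpoint $q_0=q_b-\varepsilon$.} Write $t=2\bar\Phi(z)$, so that $z=z_t$ is decreasing in $t$. Then
\[
G_{\mu}(t)/t=\widetilde R_\mu(z_t).
\]
By Lemma~\ref{lem:folded_normal_tail_ratio}, $\widetilde R_\mu$ is nondecreasing in $z$, so $t\mapsto G_\mu(t)/t$ is nonincreasing on $(0,1)$. For $0<q\le q_0$ we have $t_q\le t_{q_0}$, and therefore
\[
\frac{G_{\mu_n}(t_q)}{q}=\frac{\alpha_n s_n}{n}\,\frac{G_{\mu_n}(t_q)}{t_q}\ge \frac{\alpha_n s_n}{n}\,\frac{G_{\mu_n}(t_{q_0})}{t_{q_0}}=\frac{G_{\mu_n}(t_{q_0})}{q_0}.
\]
This monotonicity step is what handles $q\to0$. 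Near $q=0$ the quantile $z_{t_q}$ drifts away from $a_n$ by an amount $\log(1/q)/a_n$ that is not uniformly small, so a direct expansion would fail there. After the reduction, it remains to show that $G_{\mu_n}(t_{q_0})\ge(1+\eta_\varepsilon)q_0$ for all large $n$.

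\emph{Step 3: the endpoint via quantile expansion.} By \eqref{eq:Gmu_initial_representation},
\[
G_{\mu_n}(t_{q_0})\ge\bar\Phi(z_{t_{q_0}}-a_n-b).
\]
The standard expansion $\bar\Phi^{-1}(u)=\sqrt{2\log(1/u)}+o(1)$ as $u\to0$ gives
\[
z_{t_{q_0}}=\sqrt{2L_n+2\log\{2/(q_0\alpha_n)\}}+o(1),\qquad L_n=\log(n/s_n).
\]
Hence
\[
z_{t_{q_0}}-a_n=\frac{2\log\{2/(q_0\alpha_n)\}}{\sqrt{2L_n+2\log\{2/(q_0\alpha_n)\}}+\sqrt{2L_n}}+o(1)\to0.
\]
The convergence uses two facts: $\log(1/q_0)$ is a fixed constant, and $\log(1/\alpha_n)=o(a_n)$ by \eqref{eq:alpha_condition_crossing}. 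By continuity of $\bar\Phi$ it follows that
\[
\bar\Phi(z_{t_{q_0}}-a_n-b)\to\bar\Phi(-b)=q_b=q_0+\varepsilon.
\]
Choose $\eta_\varepsilon=\varepsilon/(2q_0)$. Then there is $n_\varepsilon$ such that $G_{\mu_n}(t_{q_0})\ge q_0+\varepsilon/2=(1+\eta_\varepsilon)q_0$ for all $n\ge n_\varepsilon$. Combined with Steps 1 and 2, this gives \eqref{eq:deterministic_crossing_conclusion} uniformly over $0<q\le q_b-\varepsilon$ and $\theta\in\Theta_b$.

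The main obstacle is Step 2. The key observation is that Lemma~\ref{lem:folded_normal_tail_ratio} makes $G_\mu(t)/t$ monotone, and this is what transfers the endpoint bound to the whole range of $q$. Only a routine check is then needed in Step 3: the $o(1)$ term in the quantile expansion is legitimate because $t_{q_0}\to0$, which holds since $\alpha_n s_n/n\to0$.
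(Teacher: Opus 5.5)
Your proof is correct and follows essentially the same route as the paper: evaluate at the endpoint $q_b-\varepsilon$ via the Gaussian quantile expansion, propagate to all smaller $q$ through the monotonicity of $t\mapsto G_\mu(t)/t$ from Lemma~\ref{lem:folded_normal_tail_ratio}, and reduce to $\mu=a_n+b$ using monotonicity of $G_\mu$ in $|\mu|$. The differences are cosmetic. You perform the worst-case reduction first rather than last, and you take $\eta_\varepsilon=\varepsilon/(2q_0)$, which is the upper endpoint of the paper's admissible range for $\eta_\varepsilon$ and still valid because the limit $q_0+\varepsilon$ strictly exceeds $q_0+\varepsilon/2$.
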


Lemma~\ref{lem:deterministic_signal_crossing} provides the fundamental deterministic characterization of the GBS rejection path. It shows that, uniformly over the beta-min class $\Theta_b$, the expected proportion of signal $p$-values below the GBS critical sequence exceeds the corresponding proportion of critical values throughout the entire range $0<q\le\Phi(b)-\varepsilon$. In other words, the deterministic signal process lies strictly above the GBS critical sequence up to the asymptotically optimal crossing point determined by the sparse detection boundary. This result identifies the deterministic location of the signal crossing and establishes the benchmark against which the stochastic behavior of the GBS rejection path will subsequently be compared. The next result shows that the empirical signal process closely tracks this deterministic approximation with probability tending to one, thereby transferring the deterministic crossing phenomenon to the actual GBS rejection path.

\begin{proof}[{\bf Proof of Lemma~\ref{lem:deterministic_signal_crossing}}]
	Recall from the proof of Lemma~\ref{lem:initial-crossing} that
	\[
	G_\mu(t)
	=
	\mathbb P_\mu\{2\bar\Phi(|X|)\le t\},
	\qquad X\sim N(\mu,1),
	\]
	is the distribution function of the two-sided $p$-value for testing
	$H_0:\mu=0$ versus $H_1:\mu\neq0$. Moreover,
	\begin{equation}
		G_\mu(t)
		=
		\bar\Phi(z_t-\mu)+\bar\Phi(z_t+\mu),
		\qquad
		z_t=\Phi^{-1}(1-t/2),
		\label{eq:Gmu_representation}
	\end{equation}
	and $G_\mu(t)$ is symmetric in $\mu$ and nondecreasing as a function
	of $|\mu|$.
	
	Fix $\varepsilon\in(0,q_b)$, and define
	\begin{equation}
		q_\varepsilon=q_b-\varepsilon,
		\qquad
		t_\varepsilon=q_\varepsilon\alpha_n\frac{s_n}{n}.
		\label{eq:qeps_teps_def}
	\end{equation}
	Since $q_\varepsilon$ is fixed and positive,
	\begin{align}
		\log(1/t_\varepsilon)
		&=
		\log(n/s_n)+\log(1/\alpha_n)+\log(1/q_\varepsilon)
		\nonumber\\
		&=
		\frac{a_n^2}{2}+o(a_n),
		\label{eq:log_teps_an}
	\end{align}
	where we used $a_n^2=2\log(n/s_n)$ and
	$\log(1/\alpha_n)=o(a_n)$.
	
	By the Gaussian quantile expansion,
	\[
	z_t^2
	=
	2\log(1/t)
	+
	O\{\log\log(1/t)\},
	\qquad t\downarrow0.
	\]
	Hence, by \eqref{eq:log_teps_an},
	\begin{align}
		z_{t_\varepsilon}^2
		&=
		2\log(1/t_\varepsilon)
		+
		O\{\log\log(1/t_\varepsilon)\}
		\nonumber\\
		&=
		a_n^2
		+
		o(a_n)
		+
		O\{\log\log(1/t_\varepsilon)\}.
		\label{eq:zteps_squared_intermediate}
	\end{align}
	Since
	\[
	\log(1/t_\varepsilon)
	=
	\frac{a_n^2}{2}
	+
	o(a_n)
	\asymp
	a_n^2,
	\]
	we have
	\[
	\log\log(1/t_\varepsilon)
	=
	O(\log a_n)
	=
	o(a_n).
	\]
	Therefore, \eqref{eq:zteps_squared_intermediate} yields
	\begin{equation}
		z_{t_\varepsilon}^2
		=
		a_n^2
		+
		o(a_n).
		\label{eq:zteps_squared}
	\end{equation}
	Since $z_{t_\varepsilon}>0$ and $a_n>0$,
	\begin{align}
		z_{t_\varepsilon}-a_n
		&=
		\frac{z_{t_\varepsilon}^2-a_n^2}
		{z_{t_\varepsilon}+a_n}
		\nonumber\\
		&=
		\frac{o(a_n)}{2a_n\{1+o(1)\}}
		=
		o(1).
		\label{eq:zteps_minus_an}
	\end{align}
	Thus
	\begin{equation}
		z_{t_\varepsilon}=a_n+o(1).
		\label{eq:zteps_asymp}
	\end{equation}
	
	Using \eqref{eq:zteps_asymp}, we have
	\[
	z_{t_\varepsilon}-a_n-b=-b+o(1),
	\]
	and
	\[
	z_{t_\varepsilon}+a_n+b=2a_n+b+o(1)\to\infty.
	\]
	Hence
	\[
	\bar\Phi(z_{t_\varepsilon}+a_n+b)=o(1).
	\]
	Therefore, by \eqref{eq:Gmu_representation},
	\begin{align}
		G_{a_n+b}(t_\varepsilon)
		&=
		\bar\Phi(z_{t_\varepsilon}-a_n-b)
		+
		\bar\Phi(z_{t_\varepsilon}+a_n+b)
		\nonumber\\
		&=
		\bar\Phi(-b+o(1))+o(1)
		\nonumber\\
		&=
		\Phi(b)+o(1)\nonumber\\
		&=
		q_b+o(1).
		\label{eq:endpoint_crossing}
	\end{align}
	It follows that
	\begin{equation}
		\frac{G_{a_n+b}(t_\varepsilon)}{q_\varepsilon}
		\to
		\frac{q_b}{q_b-\varepsilon}>1.
		\label{eq:endpoint_ratio_limit}
	\end{equation}
	Choose $\eta_\varepsilon>0$ such that
	\begin{equation}
		0<\eta_\varepsilon<
		\frac12
		\left\{
		\frac{q_b}{q_b-\varepsilon}-1
		\right\}.
		\label{eq:eta_choice}
	\end{equation}
	Then, by \eqref{eq:endpoint_ratio_limit}, for all sufficiently large $n$,
	\begin{equation}
		G_{a_n+b}(t_\varepsilon)
		\ge
		(1+\eta_\varepsilon)q_\varepsilon.
		\label{eq:endpoint_crossing_margin}
	\end{equation}
	
	Now fix $0<q\le q_\varepsilon$, and define
	\begin{equation}
		t_q=q\alpha_n\frac{s_n}{n}.
		\label{eq:tq_definition}
	\end{equation}
	Then $t_q\le t_\varepsilon$. By Lemma~\ref{lem:folded_normal_tail_ratio},
	the map $t\mapsto G_{a_n+b}(t)/t$ is nonincreasing on $(0,1)$.
	Therefore,
	\begin{equation}
		\frac{G_{a_n+b}(t_q)}{t_q}
		\ge
		\frac{G_{a_n+b}(t_\varepsilon)}{t_\varepsilon}.
		\label{eq:ratio_comparison_tq_teps}
	\end{equation}
	Using \eqref{eq:qeps_teps_def}, \eqref{eq:tq_definition}, and
	\eqref{eq:endpoint_crossing_margin}, we obtain
	\begin{align}
		G_{a_n+b}(t_q)
		&\ge
		\frac{t_q}{t_\varepsilon}
		G_{a_n+b}(t_\varepsilon)
		\nonumber\\
		&=
		\frac{q}{q_\varepsilon}
		G_{a_n+b}(t_\varepsilon)
		\nonumber\\
		&\ge
		(1+\eta_\varepsilon)q.
		\label{eq:crossing_for_all_q}
	\end{align}
	
	Finally, let $\theta\in\Theta_b$ and $i\in S_\theta$. Then
	$|\theta_i|\ge a_n+b$. Since $G_\mu(t)$ is symmetric in $\mu$ and
	nondecreasing as a function of $|\mu|$,
	\[
	G_{\theta_i}(t_q)\ge G_{a_n+b}(t_q).
	\]
	Combining this with \eqref{eq:crossing_for_all_q} gives
	\[
	G_{\theta_i}\!\left(q\alpha_n\frac{s_n}{n}\right)
	\ge
	(1+\eta_\varepsilon)q,
	\qquad i\in S_\theta,
	\]
	uniformly over $0<q\le q_b-\varepsilon$. This proves
	\eqref{eq:deterministic_crossing_conclusion}.
\end{proof}

The deterministic crossing established in the previous lemma must now be transferred to the empirical rejection path of the GBS procedure. The following result shows that this transfer occurs uniformly over the beta-min parameter space.

\begin{lemma}[Empirical signal crossing]
	\label{lem:empirical_crossing}
	Assume Lemmas~\ref{lem:deterministic_signal_crossing} and~\ref{lem:initial-crossing}.
	Fix $\eps\in(0,\Phi(b))$ and put
	\begin{equation}
		r_n=\left\lfloor \{\Phi(b)-\eps\}s_n\right\rfloor .
		\label{eq:rn_emp_crossing}
	\end{equation}
	Then
	\begin{equation}
		\sup_{\theta\in\Theta_b}
		\mathbb{P}_{\theta}(R_n^{\GBS}<r_n)\to0.
		\label{eq:emp_crossing_conclusion}
	\end{equation}
\end{lemma}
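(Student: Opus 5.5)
The plan is to reduce the event $\{R_n^{\GBS}\ge r_n\}$ to a statement about signal $p$-values only. By \eqref{eq:gbs_rejection_number}, $R_n^{\GBS}\ge r_n$ holds as soon as $P_{(j)}\le c_j$ for every $1\le j\le r_n$. Adding null $p$-values can only decrease order statistics, so $P_{(j)}\le Q_{(j)}$ for $j\le s_n$. Moreover, $Q_{(j)}\le c_j$ is equivalent to $N_1(c_j)\ge j$. Hence
\[
\mathbb P_\theta(R_n^{\GBS}<r_n)\le \mathbb P_\theta\Bigl(\exists\, j\le r_n:\ N_1(c_j)<j\Bigr),
\]
and this bound involves only the $s_n$ independent signal coordinates. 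For each fixed $t$, $N_1(t)$ is a sum of independent Bernoulli variables with success probabilities $G_{\theta_i}(t)$.

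Next I would compare $c_j$ with the grid used in Lemma~\ref{lem:deterministic_signal_crossing}. Writing $c_j=j\alpha_n/\{n+1-j+j\alpha_n\}$, we have $c_j\ge j\alpha_n/n$ whenever $j(1-\alpha_n)\ge1$, which holds for all $j\ge2$ once $\alpha_n<1/2$. Take $q=j/s_n\le \Phi(b)-\eps$ for $j\le r_n$. By monotonicity of $G_\mu$ in $t$ and Lemma~\ref{lem:deterministic_signal_crossing}, this gives $G_{\theta_i}(c_j)\ge(1+\eta_\eps)j/s_n$ for all $i\in S_\theta$, and therefore $\mathbb E_\theta N_1(c_j)\ge(1+\eta_\eps)j$, uniformly over $\theta\in\Theta_b$ and $2\le j\le r_n$. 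A multiplicative Chernoff bound for sums of independent Bernoullis then gives
\[
\mathbb P_\theta\{N_1(c_j)<j\}\le \exp(-\kappa_\eps j),
\]
with $\kappa_\eps>0$ depending only on $\eta_\eps$. This is because $\{N_1<j\}\subseteq\{N_1<(1+\eta_\eps)^{-1}\mathbb E N_1\}$ and the deviation exponent is proportional to $\mathbb E N_1\ge j$.

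The main obstacle is the range of small $j$, where $\sum_j e^{-\kappa_\eps j}$ does not vanish. Lemma~\ref{lem:initial-crossing} handles this. Let $\mu_n=\inf_{\theta\in\Theta_b}\mathbb E_\theta N_1(c_1)\to\infty$ and set $K_n=\lfloor\mu_n/2\rfloor$. Since $c_j$ is nondecreasing in $j$, the event $\{N_1(c_1)\ge K_n\}$ forces $N_1(c_j)\ge K_n\ge j$ for every $j\le K_n$. The lower Chernoff bound gives $\mathbb P_\theta\{N_1(c_1)<\mu_n/2\}\le e^{-\mu_n/8}$ uniformly in $\theta$. This also disposes of $j=1$, which was excluded above.

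Combining the two ranges by a union bound yields
\[
\sup_{\theta\in\Theta_b}\mathbb P_\theta(R_n^{\GBS}<r_n)\le e^{-\mu_n/8}+\sum_{j>K_n}e^{-\kappa_\eps j}\to0 .
\]
All constants ($\eta_\eps$, $\kappa_\eps$, $\mu_n$) are free of $\theta$, which gives the required uniformity over $\Theta_b$. If $r_n\le K_n$, the second sum is empty.
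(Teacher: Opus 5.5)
Your proposal is correct and follows essentially the same route as the paper. The shared steps are: the reduction to $\{N_1(c_j)<j \text{ for some } j\le r_n\}$; an initial range handled by Lemma~\ref{lem:initial-crossing} and a single lower-tail Chernoff bound at $c_1$; and a bulk range handled by $c_j\ge j\alpha_n/n$, Lemma~\ref{lem:deterministic_signal_crossing}, Chernoff, and a geometric union bound. The only difference is cosmetic: you use the explicit cutoff $K_n=\lfloor\mu_n/2\rfloor$, whereas the paper uses an auxiliary sequence $L_n\to\infty$ with $L_n=o(m_n)$ and $L_n=o(s_n)$.
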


Lemma~\ref{lem:empirical_crossing} constitutes the final step in the signal-crossing analysis by transferring the deterministic crossing established in Lemma~\ref{lem:deterministic_signal_crossing} to the empirical GBS rejection path. Specifically, it shows that, uniformly over the beta-min class $\Theta_b$, the GBS procedure rejects at least $(\Phi(b)-\varepsilon)s_n$ hypotheses with probability tending to one. Equivalently, the number of missed signals is asymptotically bounded above by $(1-\Phi(b)+\varepsilon)s_n$ with overwhelming probability. Since the false-positive contribution is already known to be asymptotically negligible by Lemma~\ref{lem:false-positive}, this result yields the sharp characterization of the normalized Hamming risk and paves the way for the main sharp asymptotic minimaxity theorem.

\begin{proof}[{\bf Proof of Lemma~\ref{lem:empirical_crossing}}]
	Fix $\theta\in\Theta_b$. Recall that
	\begin{equation}
		N_1(t)=\sum_{i\in S_\theta}\mathbf 1\{P_i\le t\}.
		\label{eq:N1_def_emp_crossing}
	\end{equation}
If
\begin{equation}
	N_1(c_j)\ge j,\qquad j=1,\ldots,r_n,
	\label{eq:N1_crossing_all_j}
\end{equation}
then, for each $1\le j\le r_n$, at least $j$ signal $p$-values are no larger
than $c_j$. Consequently, at least $j$ of the full collection of $n$ $p$-values
are no larger than $c_j$, and hence
\begin{equation}
	P_{(j)}\le c_j,\qquad j=1,\ldots,r_n.
	\label{eq:Pj_leq_cj_from_N1}
\end{equation}
By the definition of the GBS step-down rejection number, this implies
$R_n^{\GBS}\ge r_n$. Therefore,
\begin{equation}
	\{R_n^{\GBS}<r_n\}
	\subseteq
	\left\{
	N_1(c_j)<j\ \text{for some }1\le j\le r_n
	\right\},
	\label{eq:emp_crossing_event_inclusion}
\end{equation}
and hence
\begin{equation}
	\mathbb P_\theta(R_n^{\GBS}<r_n)
	\le
	\mathbb P_\theta\left\{
	N_1(c_j)<j\ \text{for some }1\le j\le r_n
	\right\}.
	\label{eq:emp_crossing_reduction}
\end{equation}
	
	By Lemma~\ref{lem:initial-crossing},
	\begin{equation}
		m_n:=
		\inf_{\theta\in\Theta_b}\mathbb E_\theta N_1(c_1)
		\to\infty.
		\label{eq:mn_initial_crossing}
	\end{equation}
	Choose a sequence of positive integers $\{L_n\}$ such that
	\begin{equation}
		L_n\to\infty,\qquad
		L_n=o(m_n),\qquad
		L_n=o(s_n).
		\label{eq:Ln_conditions}
	\end{equation}
	For all sufficiently large $n$, $L_n\le m_n/2$. Since $c_j\ge c_1$,
	\begin{align}
		\mathbb P_\theta\left\{
		N_1(c_j)<j\ \text{for some }1\le j<L_n
		\right\}
		&\le
		\mathbb P_\theta\{N_1(c_1)<L_n\}
		\nonumber\\
		&\le
		\mathbb P_\theta\left\{
		N_1(c_1)<\frac12\mathbb E_\theta N_1(c_1)
		\right\}.
		\label{eq:early_rank_reduction}
	\end{align}
	By Chernoff's inequality,
	\begin{equation}
		\mathbb P_\theta\left\{
		N_1(c_1)<\frac12\mathbb E_\theta N_1(c_1)
		\right\}
		\le
		\exp\left\{-\frac18\mathbb E_\theta N_1(c_1)\right\}
		\le
		\exp\{-m_n/8\}.
		\label{eq:early_chernoff}
	\end{equation}
	Hence
	\begin{equation}
		\sup_{\theta\in\Theta_b}
		\mathbb P_\theta\left\{
		N_1(c_j)<j\ \text{for some }1\le j<L_n
		\right\}
		\to0.
		\label{eq:early_ranks_done}
	\end{equation}
	
	It remains to handle $L_n\le j\le r_n$. Put
	\begin{equation}
		q_j=\frac{j}{s_n}.
		\label{eq:qj_emp_crossing}
	\end{equation}
	By \eqref{eq:rn_emp_crossing},
	\begin{equation}
		0<q_j\le \Phi(b)-\eps,
		\qquad L_n\le j\le r_n.
		\label{eq:qj_range_emp_crossing}
	\end{equation}
	For all sufficiently large $n$, $\alpha_n<1/2$ and $L_n\ge2$. Thus,
	for $L_n\le j\le r_n$,
	\[
	j(1-\alpha_n)\ge1,
	\]
	and hence
	\begin{align}
		c_j
		=
		\frac{j\alpha_n}{n+1-j(1-\alpha_n)}
		\ge
		\frac{j\alpha_n}{n}
		=
		q_j\alpha_n\frac{s_n}{n}.
		\label{eq:cj_lower_bound_emp}
	\end{align}
	Since $G_{\theta_i}(t)$ is nondecreasing in $t$, Lemma~\ref{lem:deterministic_signal_crossing},
	\eqref{eq:qj_range_emp_crossing}, and \eqref{eq:cj_lower_bound_emp} imply that,
	for all sufficiently large $n$,
	\begin{equation}
		G_{\theta_i}(c_j)
		\ge
		(1+\eta_\eps)q_j,
		\qquad
		i\in S_\theta,\quad L_n\le j\le r_n.
		\label{eq:G_cj_lower_emp}
	\end{equation}
	Consequently,
	\begin{align}
		\mathbb E_\theta N_1(c_j)
		&=
		\sum_{i\in S_\theta}
		\mathbb P_{\theta_i}(P_i\le c_j)
		\nonumber\\
		&=
		\sum_{i\in S_\theta}
		G_{\theta_i}(c_j)
		\nonumber\\
		&\ge
		s_n(1+\eta_\eps)q_j
		\nonumber\\
		&= (1+\eta_\eps)j.
		\label{eq:mean_bulk_lower_emp}
	\end{align}
	By Chernoff's inequality,
	\begin{align}
		\mathbb P_\theta\{N_1(c_j)<j\}
		&\le
		\mathbb P_\theta\left\{
		N_1(c_j)
		<
		\frac{1}{1+\eta_\eps}
		\mathbb E_\theta N_1(c_j)
		\right\}
		\nonumber\\
		&\le
		\exp\left\{
		-\frac{\eta_\eps^2}{2(1+\eta_\eps)}j
		\right\}.
		\label{eq:bulk_chernoff_emp}
	\end{align}
	Let
	\begin{equation}
		C_\eps=\frac{\eta_\eps^2}{2(1+\eta_\eps)}>0.
		\label{eq:Ceps_emp}
	\end{equation}
	Then, by the union bound,
	\begin{align}
		\sup_{\theta\in\Theta_b}
		\mathbb P_\theta\left\{
		N_1(c_j)<j\ \text{for some }L_n\le j\le r_n
		\right\}
		&\le
		\sum_{j=L_n}^{r_n}\exp\{-C_\eps j\}
		\nonumber\\
		&\le
		\sum_{j=L_n}^{\infty}\exp\{-C_\eps j\}
		\to0.
		\label{eq:bulk_union_emp}
	\end{align}
	Combining \eqref{eq:emp_crossing_reduction}, \eqref{eq:early_ranks_done},
	and \eqref{eq:bulk_union_emp}, we obtain
	\[
	\sup_{\theta\in\Theta_b}
	\mathbb P_\theta(R_n^{\GBS}<r_n)\to0.
	\]
	This concludes the proof of Lemma~\ref{lem:empirical_crossing}.
\end{proof}

The preceding signal-crossing results provide precise control of the number of rejected signals. We now convert these probabilistic statements into the sharp asymptotic bound for the false-negative contribution.

\begin{lemma}[False negatives attain the sharp constant]
	\label{lem:false-negative}
	Under the assumptions of Theorem \ref{thm:hamming-main},
	\begin{equation}
		\sup_{\theta\in\Theta_b}
		\frac{1}{s_n}\mathbb{E}_{\theta} T(\theta,\varphi^{\GBS})
		\le \bar\Phi(b)+o(1).
		\label{eq:false_negative_bound}
	\end{equation}
\end{lemma}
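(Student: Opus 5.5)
The plan is to reduce the false-negative count to the total rejection number $R_n^{\GBS}$ and the false-positive count $V$, and then apply Lemma~\ref{lem:empirical_crossing} and Lemma~\ref{lem:false-positive}. Fix $\theta\in\Theta_b$. Among the $R_n^{\GBS}$ rejected hypotheses, exactly $V(\theta,\varphi^{\GBS})$ are nulls, so the number of rejected signals is $R_n^{\GBS}-V(\theta,\varphi^{\GBS})$. This gives the exact pathwise identity
\[
T(\theta,\varphi^{\GBS}) = s_n - R_n^{\GBS} + V(\theta,\varphi^{\GBS}),
\]
together with the trivial bound $T(\theta,\varphi^{\GBS})\le s_n$.

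Next, fix $\eps\in(0,\Phi(b))$ and let $r_n=\lfloor\{\Phi(b)-\eps\}s_n\rfloor$ as in Lemma~\ref{lem:empirical_crossing}. I split the expectation according to the event $\{R_n^{\GBS}\ge r_n\}$. On this event the identity gives $T\le s_n-r_n+V$. On its complement I use $T\le s_n$. Hence
\[
\mathbb E_\theta T(\theta,\varphi^{\GBS})
\le s_n-r_n+\mathbb E_\theta V(\theta,\varphi^{\GBS})+s_n\,\mathbb P_\theta(R_n^{\GBS}<r_n).
\]
I then divide by $s_n$ and use three facts:
\begin{enumerate}[label=(\roman*)]
\item $(s_n-r_n)/s_n\le \bar\Phi(b)+\eps+1/s_n$;
\item Lemma~\ref{lem:false-positive} bounds $s_n^{-1}\mathbb E_\theta V$ by $\frac{\alpha_n}{1-\alpha_n}(1+1/s_n)$, uniformly in $\theta$;
\item Lemma~\ref{lem:empirical_crossing} makes $\sup_{\theta\in\Theta_b}\mathbb P_\theta(R_n^{\GBS}<r_n)\to0$.
\end{enumerate}
Taking the supremum over $\Theta_b$ yields
\[
\limsup_{n\to\infty}\sup_{\theta\in\Theta_b}\frac{1}{s_n}\mathbb E_\theta T(\theta,\varphi^{\GBS})\le\bar\Phi(b)+\eps .
\]

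Finally, since $\eps\in(0,\Phi(b))$ is arbitrary, the $\limsup$ is at most $\bar\Phi(b)$. This is exactly the statement \eqref{eq:false_negative_bound} in $o(1)$ form: the positive part of the difference between the left-hand side and $\bar\Phi(b)$ tends to zero.

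The only genuinely delicate ingredient is the uniform-in-$\theta$ control of $\mathbb P_\theta(R_n^{\GBS}<r_n)$. That control is already supplied by Lemma~\ref{lem:empirical_crossing}, so the step I would double-check is that the decomposition above holds pathwise. It does, because the GBS rejection set consists of the $R_n^{\GBS}$ smallest $p$-values, and every rejection is either a null rejection, counted in $V$, or a signal rejection.
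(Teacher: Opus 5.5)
Your proposal is correct and follows the paper's proof essentially step for step. It uses the same decomposition $T(\theta,\varphi^{\GBS})\le s_n-r_n+V(\theta,\varphi^{\GBS})+s_n\mathbf 1\{R_n^{\GBS}<r_n\}$, controls the three terms by the floor bound, Lemma~\ref{lem:false-positive}, and Lemma~\ref{lem:empirical_crossing}, and then lets $\eps\downarrow0$; your remark that $T=s_n-R_n^{\GBS}+V$ holds as an exact identity is only a harmless sharpening of the paper's inequality.
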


Lemma~\ref{lem:false-negative} establishes that the false-negative contribution of the GBS procedure attains the sharp asymptotic minimax constant identified by \citet{ACR2024}. Combined with the asymptotically negligible false-positive contribution established in Lemma~\ref{lem:false-positive}, it follows that the normalized Hamming risk of the GBS procedure is asymptotically governed entirely by the universal constant $\bar{\Phi}(b)$. Consequently, all the essential ingredients required for proving sharp asymptotic minimaxity under normalized Hamming loss are now in place, and the main theorem follows by combining these two results.

\begin{proof}[{\bf Proof of Lemma~\ref{lem:false-negative}}]
	Fix $\varepsilon\in(0,\Phi(b))$ and let
	\[
	r_n=\left\lfloor \{\Phi(b)-\varepsilon\}s_n\right\rfloor .
	\]
	By Lemma~\ref{lem:empirical_crossing},
	\[
	\sup_{\theta\in\Theta_b}
	\mathbb P_\theta\{R_n^{\GBS}<r_n\}
	\to0.
	\]
	
	On the event $\{R_n^{\GBS}\ge r_n\}$, the GBS procedure makes at least
	$r_n$ total rejections. Since at most
	$V(\theta,\varphi^{\GBS})$ of these rejections can be false positives,
	the number of rejected signals is at least
	$r_n-V(\theta,\varphi^{\GBS})$. Hence, on this event,
	\[
	T(\theta,\varphi^{\GBS})
	\le
	s_n-r_n+V(\theta,\varphi^{\GBS}).
	\]
	On the complementary event, the trivial bound
	$T(\theta,\varphi^{\GBS})\le s_n$ holds. Therefore,
	\[
	T(\theta,\varphi^{\GBS})
	\le
	s_n-r_n
	+
	V(\theta,\varphi^{\GBS})
	+
	s_n\mathbf 1\{R_n^{\GBS}<r_n\}.
	\]
	Taking expectations, dividing by $s_n$, and then taking the supremum over
	$\theta\in\Theta_b$, we obtain
	\begin{align*}
		\sup_{\theta\in\Theta_b}
		\frac{1}{s_n}
		\mathbb E_\theta T(\theta,\varphi^{\GBS})
		&\le
		1-\frac{r_n}{s_n}
		+
		\sup_{\theta\in\Theta_b}
		\frac{1}{s_n}
		\mathbb E_\theta V(\theta,\varphi^{\GBS})
		+
		\sup_{\theta\in\Theta_b}
		\mathbb P_\theta\{R_n^{\GBS}<r_n\}.
	\end{align*}
	Now
	\[
	1-\frac{r_n}{s_n}
	\le
	\bar\Phi(b)+\varepsilon+\frac{1}{s_n}.
	\]
	Moreover, Lemma~\ref{lem:false-positive} gives
	\[
	\sup_{\theta\in\Theta_b}
	\frac{1}{s_n}
	\mathbb E_\theta V(\theta,\varphi^{\GBS})
	=o(1),
	\]
	and Lemma~\ref{lem:empirical_crossing} gives
	\[
	\sup_{\theta\in\Theta_b}
	\mathbb P_\theta\{R_n^{\GBS}<r_n\}
	=o(1).
	\]
	Therefore,
	\[
	\sup_{\theta\in\Theta_b}
	\frac{1}{s_n}
	\mathbb E_\theta T(\theta,\varphi^{\GBS})
	\le
	\bar\Phi(b)+\varepsilon+o(1).
	\]
	Since $\varepsilon\in(0,\Phi(b))$ is arbitrary, the desired bound follows.
\end{proof}

We next record a monotonicity consequence of
Lemma~\ref{lem:folded_normal_tail_ratio}. This result will be used in the
heterogeneous signal-strength analysis to propagate signal-crossing bounds across
the GBS critical sequence.

\begin{lemma}[Tail-ratio monotonicity]
	\label{lem:heterogeneous_tail_ratio}
	For every $\mu\in\mathbb R$, the function
	\begin{equation}
		t\mapsto \frac{G_\mu(t)}{t}
		\label{eq:Gmu_over_t_monotone}
	\end{equation}
	is nonincreasing on $(0,1)$.
\end{lemma}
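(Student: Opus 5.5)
The plan is to reduce the statement to Lemma~\ref{lem:folded_normal_tail_ratio} by the change of variables $t\mapsto z_t=\Phi^{-1}(1-t/2)$. First dispose of trivial cases. By the representation \eqref{eq:Gmu_initial_representation}, $G_\mu(t)=\bar\Phi(z_t-\mu)+\bar\Phi(z_t+\mu)$, which is symmetric in $\mu$, so it suffices to treat $\mu\ge0$. For $\mu=0$ we have $G_0(t)=2\bar\Phi(z_t)=t$, so $G_0(t)/t\equiv1$, and the claim holds trivially.

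Now fix $\mu>0$. Since $2\bar\Phi(z_t)=t$ for every $t\in(0,1)$, I will write
\[
\frac{G_\mu(t)}{t}
=
\frac{\bar\Phi(z_t-\mu)+\bar\Phi(z_t+\mu)}{2\bar\Phi(z_t)}
=
\widetilde R_\mu(z_t),
\]
with $\widetilde R_\mu$ as in \eqref{eq:folded_ratio}. The map $t\mapsto z_t$ is strictly decreasing from $(0,1)$ onto $(0,\infty)$. Lemma~\ref{lem:folded_normal_tail_ratio} says $\widetilde R_\mu$ is nondecreasing on $[0,\infty)$. Composing a nondecreasing function with a decreasing one gives a nonincreasing function, so $t\mapsto G_\mu(t)/t$ is nonincreasing on $(0,1)$, as claimed.

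There is no genuine obstacle here. The only points needing care are the following.
\begin{enumerate}[label=(\roman*)]
\item The identity $2\bar\Phi(z_t)=t$ must be checked directly from the definition of $z_t$.
\item One must verify that $z_t$ ranges over $(0,\infty)\subset[0,\infty)$, so that Lemma~\ref{lem:folded_normal_tail_ratio} is applicable at every point.
\item Negative $\mu$ must be handled by the symmetry $G_{-\mu}=G_\mu$ rather than by Lemma~\ref{lem:folded_normal_tail_ratio}, which is stated only for $\mu>0$.
\end{enumerate}
This lemma also supplies the justification used inside the proof of Lemma~\ref{lem:deterministic_signal_crossing}, where the same monotonicity was invoked for $\mu=a_n+b>0$.
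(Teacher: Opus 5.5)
Your proposal is correct and follows essentially the same route as the paper's proof. Both reduce to $\mu\ge0$ by symmetry and treat $\mu=0$ via $G_0(t)=t$. For $\mu>0$, both write $G_\mu(t)/t=\widetilde R_\mu(z_t)$ and compose the monotonicity from Lemma~\ref{lem:folded_normal_tail_ratio} with the decreasing map $t\mapsto z_t$.
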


\begin{proof}[{\bf Proof of Lemma~\ref{lem:heterogeneous_tail_ratio}}]
	Since $G_\mu(t)=G_{-\mu}(t)$, it is enough to consider $\mu\ge0$.
	For $\mu=0$, we have $G_0(t)=t$, so the claim is immediate.
	Now suppose $\mu>0$. Let
	\[
	z_t=\Phi^{-1}(1-t/2),
	\qquad
	t=2\bar\Phi(z_t).
	\]
	Then
	\[
	G_\mu(t)
	=
	\bar\Phi(z_t-\mu)+\bar\Phi(z_t+\mu),
	\]
	and hence
	\[
	\frac{G_\mu(t)}{t}
	=
	\frac{\bar\Phi(z_t-\mu)+\bar\Phi(z_t+\mu)}
	{2\bar\Phi(z_t)}.
	\]
	By Lemma~\ref{lem:folded_normal_tail_ratio}, the right-hand side is
	nondecreasing as a function of $z_t$. Since $z_t$ is decreasing in $t$,
	it follows that $G_\mu(t)/t$ is nonincreasing in $t$.
\end{proof}

Lemma~\ref{lem:heterogeneous_tail_ratio} is a structural monotonicity result.
It allows lower bounds verified at one threshold to be propagated to smaller
thresholds along the GBS critical sequence. This property is used below in the
heterogeneous false-negative analysis.


We next extend the preceding analysis beyond the classical beta-min framework. Rather than assuming a common lower bound on all signal strengths, we allow the non-null means to vary across coordinates, following the heterogeneous signal-strength formulation introduced by \citet{ACR2024}. The next result shows that the empirical signal-crossing argument continues to hold uniformly in this more general setting.

\begin{lemma}[Initial crossing under heterogeneous signal strengths]
	\label{lem:heterogeneous_initial_crossing}
	Fix $\delta\in(0,1)$. Suppose that
	\begin{equation}
		1-\Lambda_n(a)\ge \delta
		\label{eq:qstar_lower_delta}
	\end{equation}
	along a subsequence. Then, uniformly over $\theta\in\Theta(a,s_n)$,
	\begin{equation}
		\mathbb E_\theta N_1(c_1)\to\infty,
		\label{eq:heterogeneous_initial_expectation}
	\end{equation}
	where
	\[
	N_1(t)=\sum_{i\in S_\theta}1\{P_i\le t\},
	\qquad
	c_1=\frac{\alpha_n}{n+\alpha_n}.
	\]
\end{lemma}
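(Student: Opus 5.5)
The plan is to reduce the statement to the beta-min computation already carried out in the proof of Lemma~\ref{lem:initial-crossing}. I will work along the subsequence on which \eqref{eq:qstar_lower_delta} holds; all limits below are taken along it.

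\emph{Step 1: isolate a linear fraction of strong signals.} Since $1-\Lambda_n(a)=s_n^{-1}\sum_{j}\Phi(a_j-a_n^*)\ge\delta$ and each summand is at most $1$, a Markov-type counting argument gives a lower bound on the number of strong signals. Let
\[
J_n=\{j:\Phi(a_j-a_n^*)\ge\delta/2\}.
\]
Then $\delta s_n\le |J_n|+(s_n-|J_n|)\delta/2$, hence $|J_n|\ge (\delta/2)s_n$. Put $b_\delta=\Phi^{-1}(\delta/2)$, a fixed real constant. Then $a_j\ge a_n^*+b_\delta$ for every $j\in J_n$.

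\emph{Step 2: reduce to a common lower signal strength.} Fix $\theta\in\Theta(a,s_n)$ and let $i_1,\ldots,i_{s_n}$ be the distinct indices in \eqref{eq:theta_general_a}, so that $|\theta_{i_j}|\ge a_j$. Recall from the proof of Lemma~\ref{lem:initial-crossing} that $G_\mu(t)$ is symmetric in $\mu$ and nondecreasing in $|\mu|$. Dropping the indices outside $J_n$ then gives
\[
\mathbb E_\theta N_1(c_1)\ \ge\ \sum_{j\in J_n}G_{a_j}(c_1)\ \ge\ \frac{\delta}{2}\,s_n\,G_{a_n^*+b_\delta}(c_1)\ \ge\ \frac{\delta}{2}\,s_n\,\bar\Phi\bigl(z_{c_1}-a_n^*-b_\delta\bigr).
\]
The right-hand side does not depend on $\theta$, so uniformity over $\Theta(a,s_n)$ is automatic.

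\emph{Step 3: show divergence.} It remains to show $s_n\bar\Phi(z_{c_1}-a_n^*-b_\delta)\to\infty$, because the fixed factor $\delta/2$ is harmless. This is exactly the quantity treated in the proof of Lemma~\ref{lem:initial-crossing}, with $b$ replaced by the fixed constant $b_\delta$. That argument used only $s_n\to\infty$, $n/s_n\to\infty$ and $\log(1/\alpha_n)=o(a_n^*)$, all of which hold here by \eqref{eq:general_sparse_regime} and Assumption~\ref{ass:alpha}. Concretely:
\begin{itemize}
\item The quantile expansion gives $z_{c_1}=\sqrt{2(L_n+S_n+A_n)}+o(1)$.
\item Mills' lower bound then reduces the claim to
\[
\sqrt{2L_n}\,\Delta_n-A_n-C(1+\Delta_n)-\log(1+\Delta_n)\to\infty ,
\]
where the constant $C$ now absorbs $b_\delta$.
\item This divergence follows from $S_n\to\infty$ and $A_n=o(\sqrt{L_n})$, exactly as before.
\end{itemize}

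The main obstacle is Step 1. The heterogeneous class only controls the average $\Lambda_n(a)$, and a few very weak coordinates could in principle make the sum small. The counting argument is what converts the aggregate condition into a linear-in-$s_n$ set of coordinates that satisfies a beta-min condition with a fixed offset $b_\delta$. Once that is done, the remaining work is a direct reuse of the Gaussian tail calculation from Lemma~\ref{lem:initial-crossing}. I will only check carefully that the constant shift $b_\delta$ does not interfere with the expansion, which holds because $b_\delta$ is fixed while $\sqrt{2L_n}\Delta_n-A_n\to\infty$.
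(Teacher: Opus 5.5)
Your proposal is correct and follows the paper's proof. The paper also uses a counting argument on $\Phi(a_j-a_n^*)$, with $J_n=\{j:a_j\ge a_n^*-M\}$ and $\Phi(-M)\le\delta/2$ (your $b_\delta$ plays the role of $-M$), and then uses monotonicity of $G_\mu$ in $|\mu|$ to get the $\theta$-free bound $\rho_\delta s_n\bar\Phi(z_{1,n}-a_n^*+M)$. The only difference is that the paper redoes the Gaussian tail computation directly, using $z_{1,n}\le\sqrt{2\log(3n/\alpha_n)}$ and a two-case analysis, whereas you reuse the computation from Lemma~\ref{lem:initial-crossing} with $b=b_\delta$; this is legitimate, since that argument needs only a fixed offset together with the same sparsity and nominal-level conditions.
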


This result demonstrates that the deterministic and empirical signal-crossing methodology developed for the beta-min framework remains valid under heterogeneous signal strengths. The asymptotic false-negative contribution is now governed by the aggregate quantity $\Lambda_n(a)$, which reduces to $\Phi(\bar b)$ under the classical equal-signal-strength model.

\begin{proof}[{\bf Proof of Lemma~\ref{lem:heterogeneous_initial_crossing}}]
	Write
	\begin{equation}
		q_n
		=
		1-\Lambda_n(a)
		=
		\frac{1}{s_n}
		\sum_{j=1}^{s_n}\Phi(a_j-a_n^*).
		\label{eq:heterogeneous_qn_def}
	\end{equation}
	By assumption, $q_n\ge\delta$ along the subsequence under consideration.
	Choose $M>0$ so large that $\Phi(-M)\le\delta/2$, and define
	\[
	J_n=\{1\le j\le s_n: a_j\ge a_n^*-M\}.
	\]
	If $|J_n|/s_n<\rho_\delta$, where
	\[
	\rho_\delta=\frac{\delta}{2(1-\delta/2)},
	\]
	then
	\[
	q_n
	\le
	\frac{|J_n|}{s_n}
	+
	\left(1-\frac{|J_n|}{s_n}\right)\frac{\delta}{2}
	<\delta,
	\]
	which contradicts $q_n\ge\delta$. Hence
	\begin{equation}
		|J_n|\ge \rho_\delta s_n.
		\label{eq:heterogeneous_positive_fraction}
	\end{equation}
	
	Let
	\[
	z_{1,n}=\Phi^{-1}(1-c_1/2),
	\qquad
	c_1=\frac{\alpha_n}{n+\alpha_n}.
	\]
	For every signal satisfying $|\theta_i|\ge a_n^*-M$, the symmetry and
	monotonicity of $G_\mu(t)$ as a function of $|\mu|$ give
	\[
	G_{\theta_i}(c_1)
	\ge
	G_{a_n^*-M}(c_1)
	\ge
	\bar\Phi(z_{1,n}-a_n^*+M).
	\]
	Therefore, by \eqref{eq:heterogeneous_positive_fraction}, uniformly over
	$\theta\in\Theta(a,s_n)$,
	\begin{equation}
		\mathbb E_\theta N_1(c_1)
		=
		\sum_{i\in S_\theta}G_{\theta_i}(c_1)
		\ge
		\rho_\delta s_n\bar\Phi(z_{1,n}-a_n^*+M).
		\label{eq:heterogeneous_initial_lower}
	\end{equation}
	It remains to show that
	\begin{equation}
		s_n\bar\Phi(z_{1,n}-a_n^*+M)\to\infty.
		\label{eq:heterogeneous_tail_target}
	\end{equation}
	
	Put
	\[
	u_n=\log(n/s_n),
	\qquad
	v_n=\log s_n,
	\qquad
	A_n=\log(1/\alpha_n).
	\]
	Then $a_n^*=\sqrt{2u_n}$, $u_n\to\infty$, $v_n\to\infty$, and
	$A_n=o(\sqrt{u_n})$. Since
	\[
	\bar\Phi(z_{1,n})
	=
	\frac{c_1}{2}
	=
	\frac{\alpha_n}{2(n+\alpha_n)},
	\]
	the standard Gaussian quantile bound gives, for all sufficiently large $n$,
	\[
	z_{1,n}
	\le
	\sqrt{2\log(3n/\alpha_n)}
	=
	\sqrt{2(u_n+v_n+A_n+\log3)}.
	\]
	Define
	\begin{equation}
		h_n=v_n+A_n+\log3,
		\qquad
		d_n=
		\sqrt{2(u_n+h_n)}-\sqrt{2u_n}+M.
		\label{eq:heterogeneous_hn_dn_def}
	\end{equation}
	Then
	\[
	z_{1,n}-a_n^*+M\le d_n,
	\]
	and hence
	\begin{equation}
		s_n\bar\Phi(z_{1,n}-a_n^*+M)
		\ge
		s_n\bar\Phi(d_n).
		\label{eq:heterogeneous_reduce_to_dn}
	\end{equation}
	
	We now prove that $s_n\bar\Phi(d_n)\to\infty$. Let
	\[
	\Delta_n=\sqrt{u_n+h_n}-\sqrt{u_n}.
	\]
	Then
	\[
	d_n=\sqrt2\,\Delta_n+M.
	\]
	Also,
	\begin{equation}
		v_n-\Delta_n^2
		=
		2u_n\{\sqrt{1+h_n/u_n}-1\}-A_n-\log3.
		\label{eq:heterogeneous_vn_delta_identity}
	\end{equation}
	We claim that
	\begin{equation}
		v_n-\frac{d_n^2}{2}\to\infty.
		\label{eq:heterogeneous_main_exponent}
	\end{equation}
	Indeed, if $h_n/u_n\to0$, then
	\[
	\Delta_n=\frac{h_n}{2\sqrt{u_n}}\{1+o(1)\}.
	\]
	Since $A_n=o(\sqrt{u_n})$ and $v_n\to\infty$, this gives
	\[
	\Delta_n^2=o(v_n),
	\qquad
	\Delta_n=o(v_n),
	\]
	and hence
	\[
	v_n-\frac{d_n^2}{2}
	=
	v_n-\Delta_n^2-\sqrt2M\Delta_n-\frac{M^2}{2}
	\to\infty.
	\]
	If instead $h_n/u_n$ is bounded away from zero, then
	\[
	\Delta_n\ge c\sqrt{u_n}
	\]
	for some constant $c>0$. Using \eqref{eq:heterogeneous_vn_delta_identity},
	\[
	v_n-\frac{d_n^2}{2}
	=
	\Delta_n(2\sqrt{u_n}-\sqrt2M)
	-
	A_n-\log3-\frac{M^2}{2}
	\to\infty,
	\]
	because $A_n=o(\sqrt{u_n})$. Thus \eqref{eq:heterogeneous_main_exponent}
	holds in all cases.
	
	By Mills' lower bound, for some universal constant $C>0$,
	\[
	\bar\Phi(d_n)
	\ge
	C\frac{\exp\{-d_n^2/2\}}{1+d_n}.
	\]
	Therefore,
	\begin{equation}
		\log\{s_n\bar\Phi(d_n)\}
		\ge
		v_n-\frac{d_n^2}{2}-\log(1+d_n)+O(1).
		\label{eq:heterogeneous_mills_log}
	\end{equation}
	Since \eqref{eq:heterogeneous_main_exponent} holds and
	$\log(1+d_n)$ is lower order than the diverging exponent, it follows from
	\eqref{eq:heterogeneous_mills_log} that
	\[
	s_n\bar\Phi(d_n)\to\infty.
	\]
	Together with \eqref{eq:heterogeneous_reduce_to_dn}, this proves
	\eqref{eq:heterogeneous_tail_target}. Finally, combining
	\eqref{eq:heterogeneous_tail_target} with
	\eqref{eq:heterogeneous_initial_lower} gives
	\[
	\inf_{\theta\in\Theta(a,s_n)}
	\mathbb E_\theta N_1(c_1)\to\infty.
	\]
This concludes the proof of Lemma~\ref{lem:heterogeneous_initial_crossing}.	
\end{proof}

With the heterogeneous signal-crossing result in place, we are now in a position to derive the corresponding false-negative risk bound over the heterogeneous signal-strength classes. This result constitutes the principal ingredient needed for the heterogeneous sharp minimaxity theorems.

\begin{lemma}[Deterministic heterogeneous signal crossing]
	\label{lem:heterogeneous_deterministic_crossing}
	Fix $\delta\in(0,1)$. Suppose that
	\[
	q_n:=1-\Lambda_n(a)\ge \delta
	\]
	along a subsequence. Then there exist $\eta_\delta>0$ and $n_\delta\ge1$
	such that, for all $n\ge n_\delta$,
	\begin{equation}
		\inf_{\theta\in\Theta(a,s_n)}
		\frac{1}{s_n}
		\sum_{i\in S_\theta}
		G_{\theta_i}\left(
		q\alpha_n\frac{s_n}{n}
		\right)
		\ge
		(1+\eta_\delta)q
		\label{eq:heterogeneous_deterministic_crossing}
	\end{equation}
	uniformly over
	\begin{equation}
		0<q\le q_n-\delta.
		\label{eq:q_range_heterogeneous}
	\end{equation}
\end{lemma}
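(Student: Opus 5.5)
The plan is to reduce the infimum over $\Theta(a,s_n)$ to the least favourable configuration. Then I verify the crossing inequality at a single threshold and propagate it to all smaller $q$ with the tail-ratio monotonicity of Lemma~\ref{lem:heterogeneous_tail_ratio}. If $q_n-\delta\le0$ the range \eqref{eq:q_range_heterogeneous} is empty and there is nothing to prove, so assume $q_n>\delta$.

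\textbf{Step 1 (reduction to $|\theta_{i_j}|=a_j$).} For $\theta\in\Theta(a,s_n)$, take the distinct indices $i_1,\ldots,i_{s_n}$ with $|\theta_{i_j}|\ge a_j$. Because $|S_\theta|=s_n$, these indices exhaust $S_\theta$. Since $G_\mu(t)$ is symmetric in $\mu$ and nondecreasing in $|\mu|$ (proof of Lemma~\ref{lem:initial-crossing}),
\[
\frac{1}{s_n}\sum_{i\in S_\theta}G_{\theta_i}(t)\ \ge\ \bar G_n(t):=\frac{1}{s_n}\sum_{j=1}^{s_n}G_{a_j}(t),
\qquad 0<t<1 .
\]
This bound holds uniformly in $\theta$, so it suffices to prove $\bar G_n(q\alpha_n s_n/n)\ge(1+\eta_\delta)q$ for $0<q\le q_n-\delta$.

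\textbf{Step 2 (anchor point).} Put $\bar q=\max\{q_n-\delta,\delta/4\}\in[\delta/4,1]$ and $\bar t=\bar q\,\alpha_n s_n/n$. Since $\log(1/\bar q)\le\log(4/\delta)$ is bounded, the computation in the proof of Lemma~\ref{lem:deterministic_signal_crossing} goes through verbatim. It uses the Gaussian quantile expansion together with $\log(1/\alpha_n)=o(a_n^*)$, and gives $z_{\bar t}=a_n^*+o(1)$, where the $o(1)$ term is uniform in $\bar q\in[\delta/4,1]$ and does not depend on $a$.

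Using $G_{a_j}(\bar t)\ge\bar\Phi(z_{\bar t}-a_j)=\Phi(a_j-z_{\bar t})$ and the fact that $\Phi$ is $1/\sqrt{2\pi}$-Lipschitz, I get
\[
\bar G_n(\bar t)\ \ge\ \frac1{s_n}\sum_{j}\Phi(a_j-a_n^*)-o(1)=q_n-o(1).
\]
Set $\eta_\delta=\delta/4$ and compare with $(1+\eta_\delta)\bar q$ in two cases.
\begin{itemize}
\item If $\bar q=q_n-\delta$, then $(1+\delta/4)(q_n-\delta)\le q_n-\delta+\delta/4=q_n-3\delta/4$, using $q_n-\delta\le1$. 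This is below $q_n-o(1)$ for large $n$.
\item If $\bar q=\delta/4$, then $(1+\delta/4)\delta/4<\delta/2<q_n-o(1)$, using $q_n\ge\delta$.
\end{itemize}
In both cases $\bar G_n(\bar t)\ge(1+\eta_\delta)\bar q$ for all $n\ge n_\delta$, where $n_\delta$ depends only on $\delta$, $b$-free, and on the sequences $s_n,\alpha_n$.

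\textbf{Step 3 (propagation).} By Lemma~\ref{lem:heterogeneous_tail_ratio}, each map $t\mapsto G_{a_j}(t)/t$ is nonincreasing, and hence so is their average $\bar G_n(t)/t$. For $0<q\le q_n-\delta\le\bar q$ we have $t_q=q\alpha_ns_n/n\le\bar t$. Therefore
\[
\bar G_n(t_q)\ \ge\ \frac{t_q}{\bar t}\,\bar G_n(\bar t)\ \ge\ \frac{q}{\bar q}(1+\eta_\delta)\bar q=(1+\eta_\delta)q .
\]
Combined with Step~1, this yields \eqref{eq:heterogeneous_deterministic_crossing}.

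The main obstacle is Step~2, specifically getting a uniform expansion $z_{\bar t}=a_n^*+o(1)$ independent of the vector $a$. The device of flooring $\bar q$ at $\delta/4$ is what keeps $\log(1/\bar q)$ bounded, which would fail if $q_n-\delta$ were allowed to approach $0$. The expansion is then exactly the one already established for the beta-min case, with $\log(1/q_\varepsilon)$ replaced by a quantity bounded by $\log(4/\delta)$.
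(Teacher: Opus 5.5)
Your proof is correct and follows essentially the same route as the paper. You anchor at a threshold whose $\log(1/q)$ is bounded, obtain $z=a_n^*+o(1)$ and an averaged signal mass of at least $q_n-o(1)$ via Lipschitz continuity of $\Phi$, and then propagate to smaller thresholds with the tail-ratio monotonicity of Lemma~\ref{lem:heterogeneous_tail_ratio}. The only cosmetic difference is the anchor: the paper uses $q_n-\delta/2$, which is automatically at least $\delta/2$ and leaves a built-in margin, giving $\eta_\delta=3\delta/16$ with no case split, whereas you use $\max\{q_n-\delta,\delta/4\}$ and need two cases.
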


Lemma~\ref{lem:heterogeneous_deterministic_crossing} shows that the false-negative contribution continues to attain the optimal asymptotic benchmark over the heterogeneous parameter classes. Consequently, the extension of the sharp asymptotic minimaxity results follows by combining this bound with the asymptotically negligible false-positive contribution established earlier.

\begin{proof}[{\bf Proof of Lemma~\ref{lem:heterogeneous_deterministic_crossing}}]
	Set
	\begin{equation}
		q_{0,n}=q_n-\frac{\delta}{2},
		\qquad
		t_{0,n}=q_{0,n}\alpha_n\frac{s_n}{n}.
		\label{eq:q0n_t0n_def}
	\end{equation}
	Since $q_n\ge\delta$, we have $q_{0,n}\ge\delta/2$. Hence
	$\log(1/q_{0,n})=O(1)$. Therefore,
	\[
	\log(1/t_{0,n})
	=
	\log(n/s_n)+\log(1/\alpha_n)+O(1)
	=
	\frac{(a_n^*)^2}{2}+o(a_n^*).
	\]
	By the Gaussian quantile expansion, with
	$z_{0,n}=\Phi^{-1}(1-t_{0,n}/2)$,
	\begin{equation}
		z_{0,n}=a_n^*+o(1).
		\label{eq:z0_astar}
	\end{equation}
	
	Fix $\theta\in\Theta(a,s_n)$. By the definition of $\Theta(a,s_n)$,
	after relabeling the nonzero coordinates if necessary, we may write
	$S_\theta=\{i_1,\ldots,i_{s_n}\}$ with
	\[
	|\theta_{i_j}|\ge a_j,\qquad j=1,\ldots,s_n.
	\]
	Using the symmetry and monotonicity of $G_\mu(t)$ as a function of
	$|\mu|$,
	\[
	G_{\theta_{i_j}}(t_{0,n})
	\ge
	G_{a_j}(t_{0,n})
	\ge
	\bar\Phi(z_{0,n}-a_j).
	\]
	By \eqref{eq:z0_astar} and the Lipschitz continuity of $\Phi$,
	\begin{align}
		\frac{1}{s_n}
		\sum_{j=1}^{s_n}
		\bar\Phi(z_{0,n}-a_j)
		&=
		\frac{1}{s_n}
		\sum_{j=1}^{s_n}
		\Phi(a_j-z_{0,n})
		\nonumber\\
		&=
		\frac{1}{s_n}
		\sum_{j=1}^{s_n}
		\Phi(a_j-a_n^*)+o(1)
		\nonumber\\
		&=
		q_n+o(1).
		\label{eq:mean_at_t0_general}
	\end{align}
	Consequently, uniformly over $\theta\in\Theta(a,s_n)$,
	\begin{equation}
		\frac{1}{s_n}
		\sum_{i\in S_\theta}
		G_{\theta_i}(t_{0,n})
		\ge
		q_n-\frac{\delta}{8}
		\label{eq:mean_at_t0_lower}
	\end{equation}
	for all sufficiently large $n$.
	
	Now fix $0<q\le q_n-\delta$ and put
	\begin{equation}
		t_n(q)=q\alpha_n\frac{s_n}{n}.
		\label{eq:tnq_def}
	\end{equation}
	Since $q\le q_n-\delta<q_{0,n}$, we have $t_n(q)\le t_{0,n}$. By
	Lemma~\ref{lem:heterogeneous_tail_ratio},
	\[
	\frac{G_{\theta_i}(t_n(q))}{t_n(q)}
	\ge
	\frac{G_{\theta_i}(t_{0,n})}{t_{0,n}},
	\qquad i\in S_\theta.
	\]
	Averaging over $i\in S_\theta$ and using
	\eqref{eq:q0n_t0n_def}, \eqref{eq:mean_at_t0_lower}, and
	\eqref{eq:tnq_def}, we obtain
	\begin{align}
		\frac{1}{s_n}
		\sum_{i\in S_\theta}
		G_{\theta_i}(t_n(q))
		&\ge
		\frac{t_n(q)}{t_{0,n}}
		\frac{1}{s_n}
		\sum_{i\in S_\theta}
		G_{\theta_i}(t_{0,n})
		\nonumber\\
		&\ge
		q\frac{q_n-\delta/8}{q_n-\delta/2}.
		\label{eq:heterogeneous_crossing_propagation}
	\end{align}
	Since $q_n\le1$,
	\[
	\frac{q_n-\delta/8}{q_n-\delta/2}
	=
	1+\frac{3\delta/8}{q_n-\delta/2}
	\ge
	1+\frac{3\delta}{8}.
	\]
	Thus \eqref{eq:heterogeneous_deterministic_crossing} holds with, for
	example,
	\[
	\eta_\delta=\frac{3\delta}{16}.
	\]
	This completes the proof of Lemma~\ref{lem:heterogeneous_deterministic_crossing}.
\end{proof}

We now establish the empirical counterpart of the deterministic heterogeneous signal-crossing result obtained in Lemma~\ref{lem:heterogeneous_deterministic_crossing}. As in the beta-min setting, the deterministic crossing argument must be combined with concentration of the empirical signal counts in order to control the actual GBS rejection path. The following lemma shows that, uniformly over the heterogeneous signal-strength class, the empirical number of signal p-values below the GBS critical sequence closely follows its deterministic counterpart with overwhelming probability.

\begin{lemma}[Empirical heterogeneous signal crossing]
	\label{lem:heterogeneous_empirical_crossing}
	Fix $\delta\in(0,1)$ and define
	\[
	q_n=1-\Lambda_n(a).
	\]
	Let
	\begin{equation}
		r_n
		=
		\left\lfloor
		(q_n-\delta)s_n
		\right\rfloor .
		\label{eq:rn_heterogeneous}
	\end{equation}
	If $q_n\le\delta$, then $r_n\le0$. If $q_n>\delta$, then
	\begin{equation}
		\sup_{\theta\in\Theta(a,s_n)}
		\mathbb P_\theta
		\left(
		R_n^{\GBS}<r_n
		\right)
		\to0.
		\label{eq:heterogeneous_empirical_crossing_result}
	\end{equation}
\end{lemma}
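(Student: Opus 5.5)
The plan is to follow the proof of Lemma~\ref{lem:empirical_crossing} almost verbatim, replacing the homogeneous inputs by their heterogeneous counterparts, Lemmas~\ref{lem:heterogeneous_initial_crossing} and~\ref{lem:heterogeneous_deterministic_crossing}.

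The case $q_n\le\delta$ is immediate: then $(q_n-\delta)s_n\le0$, so $r_n\le0$ and $\{R_n^{\GBS}<r_n\}$ is empty. Now assume $q_n>\delta$. To obtain a limit statement, I would argue by subsequences. It suffices to show that along every subsequence on which $q_n>\delta$ there is a further subsequence where the supremum tends to zero. Along such a subsequence I would replace $\delta$ by $\delta/2$ in the hypotheses of Lemmas~\ref{lem:heterogeneous_initial_crossing} and~\ref{lem:heterogeneous_deterministic_crossing}, applied with their $\delta$ taken to be $\delta/2$. Their hypothesis $q_n\ge\delta/2$ holds, and the range $0<q\le q_n-\delta/2$ in Lemma~\ref{lem:heterogeneous_deterministic_crossing} contains $q_j=j/s_n$ for every $j\le r_n$. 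Since $\Lambda_n(a)$ is bounded, extracting subsequences causes no difficulty.

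The first step is the same event inclusion as \eqref{eq:emp_crossing_event_inclusion}:
\[
\{R_n^{\GBS}<r_n\}\subseteq\{N_1(c_j)<j\ \text{for some }1\le j\le r_n\}.
\]
This inclusion is purely combinatorial, so it transfers without change.

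Next, I would split the ranks at a sequence $L_n$ satisfying $L_n\to\infty$, $L_n=o(m_n)$ and $L_n=o(s_n)$, where $m_n=\inf_{\theta\in\Theta(a,s_n)}\mathbb E_\theta N_1(c_1)\to\infty$ by Lemma~\ref{lem:heterogeneous_initial_crossing}.

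For the early ranks $j<L_n$, I would use $c_j\ge c_1$ to reduce to the single event $\{N_1(c_1)<m_n/2\}$. The multiplicative Chernoff bound then controls this event uniformly in $\theta$. Chernoff applies because $N_1(t)$ is a sum of independent, though non-identical, Bernoulli variables.

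For the bulk ranks $L_n\le j\le r_n$, I would use the bound $c_j\ge j\alpha_n/n=q_j\alpha_n s_n/n$ from \eqref{eq:cj_lower_bound_emp} together with monotonicity of $G_\mu$ in $t$. Lemma~\ref{lem:heterogeneous_deterministic_crossing} then gives
\[
\mathbb E_\theta N_1(c_j)=\sum_{i\in S_\theta}G_{\theta_i}(c_j)\ge(1+\eta_{\delta/2})j
\]
uniformly in $\theta\in\Theta(a,s_n)$. This inequality is the one place where heterogeneity actually matters. Only the average of the $G_{\theta_i}$ is bounded below, not each term individually, but the mean of a Poisson-binomial sum is exactly what Chernoff's lower-tail bound needs. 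So $\mathbb P_\theta\{N_1(c_j)<j\}\le\exp(-C j)$ with $C=\eta^2/\{2(1+\eta)\}$, and the union bound gives a tail $\sum_{j\ge L_n}e^{-Cj}\to0$.

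I expect the main obstacle to be the uniformity bookkeeping rather than the probability estimates. The lemmas being invoked are stated along subsequences and with thresholds $n_\delta$, while $q_n$ is $n$-dependent and may approach $\delta$. I would handle this by the margin trick above: using $\delta/2$ in the earlier lemmas keeps $\eta$ a fixed constant whenever $q_n>\delta$. I would also need to check that the thresholds in those lemmas depend only on $\delta$ and not on the particular subsequence. That can be read off from their proofs, since the $o(1)$ terms there come from $z_{0,n}-a_n^*$ and $\alpha_n$, which do not depend on $a$ beyond $q_n\ge\delta/2$.
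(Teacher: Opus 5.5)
Your proposal is correct and follows the paper's proof essentially step for step. It uses the same event inclusion, the split at $L_n$ with a Chernoff bound at $c_1$ for the early ranks, and $c_j\ge j\alpha_n/n$ together with Lemma~\ref{lem:heterogeneous_deterministic_crossing}, Chernoff and a union bound for the bulk. The only difference is your $\delta/2$ margin, which is harmless but unnecessary: the paper applies Lemmas~\ref{lem:heterogeneous_initial_crossing} and~\ref{lem:heterogeneous_deterministic_crossing} with the same $\delta$, since $q_n>\delta$ already meets their hypothesis and $j/s_n\le q_n-\delta$ is exactly their admissible range.
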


Lemma~\ref{lem:heterogeneous_empirical_crossing} extends the empirical signal-crossing theory developed earlier for the beta-min framework to the heterogeneous signal-strength setting. Together with Lemma~\ref{lem:heterogeneous_deterministic_crossing}, it shows that the GBS rejection path continues to recover an asymptotically optimal proportion of the non-null hypotheses, even when the signal strengths vary across coordinates. Consequently, the deterministic crossing analysis remains valid after accounting for sampling fluctuations.

\begin{proof}[{\bf Proof of Lemma~\ref{lem:heterogeneous_empirical_crossing}}]
	If $q_n\le\delta$, then $r_n\le0$, and the assertion is immediate since
	$R_n^{\GBS}\ge0$. Hence suppose that $q_n>\delta$.
	
	Fix $\theta\in\Theta(a,s_n)$ and recall that
	\[
	N_1(t)
	=
	\sum_{i\in S_\theta}\mathbf 1\{P_i\le t\}.
	\]
	If
	\[
	N_1(c_j)\ge j,\qquad j=1,\ldots,r_n,
	\]
	then $P_{(j)}\le c_j$ for every $1\le j\le r_n$, and hence
	$R_n^{\GBS}\ge r_n$. Therefore,
	\begin{equation}
		\mathbb P_\theta(R_n^{\GBS}<r_n)
		\le
		\mathbb P_\theta\left\{
		N_1(c_j)<j\ \text{for some }1\le j\le r_n
		\right\}.
		\label{eq:heterogeneous_empirical_reduction}
	\end{equation}
	
	By Lemma~\ref{lem:heterogeneous_initial_crossing},
	\[
	D_n:=
	\inf_{\theta\in\Theta(a,s_n)}
	\mathbb E_\theta N_1(c_1)
	\to\infty.
	\]
	Choose integers $L_n$ such that
	\[
	L_n\to\infty,\qquad
	L_n=o(D_n),\qquad
	L_n=o(s_n).
	\]
	For all sufficiently large $n$, $L_n\le D_n/2$. Since $c_j\ge c_1$ for
	all $j\ge1$,
	\begin{align}
		\mathbb P_\theta\left\{
		N_1(c_j)<j\ \text{for some }1\le j<L_n
		\right\}
		&\le
		\mathbb P_\theta\{N_1(c_1)<L_n\}
		\nonumber\\
		&\le
		\mathbb P_\theta\left\{
		N_1(c_1)<\frac12\mathbb E_\theta N_1(c_1)
		\right\}.
		\label{eq:heterogeneous_early_reduction}
	\end{align}
	By Chernoff's inequality,
	\[
	\mathbb P_\theta\left\{
	N_1(c_1)<\frac12\mathbb E_\theta N_1(c_1)
	\right\}
	\le
	\exp\left\{-\frac18\mathbb E_\theta N_1(c_1)\right\}
	\le
	\exp\{-D_n/8\}.
	\]
	Hence
	\begin{equation}
		\sup_{\theta\in\Theta(a,s_n)}
		\mathbb P_\theta\left\{
		N_1(c_j)<j\ \text{for some }1\le j<L_n
		\right\}
		\to0.
		\label{eq:early_heterogeneous}
	\end{equation}
	
	It remains to handle $L_n\le j\le r_n$. Put
	\[
	q_{j,n}=\frac{j}{s_n}.
	\]
	For $L_n\le j\le r_n$, we have
	\[
	0<q_{j,n}\le q_n-\delta.
	\]
	Moreover, for all sufficiently large $n$, $\alpha_n<1/2$ and
	$L_n\ge2$, so $j(1-\alpha_n)\ge1$. Therefore,
	\begin{align}
		c_j
		&=
		\frac{j\alpha_n}{n+1-j(1-\alpha_n)}
		\nonumber\\
		&\ge
		\frac{j\alpha_n}{n}
		=
		q_{j,n}\alpha_n\frac{s_n}{n}.
		\label{eq:heterogeneous_cj_lower}
	\end{align}
	By Lemma~\ref{lem:heterogeneous_deterministic_crossing} and the
	monotonicity of $G_\mu(t)$ in $t$,
	\begin{align}
		\inf_{\theta\in\Theta(a,s_n)}
		\mathbb E_\theta N_1(c_j)
		&=
		\inf_{\theta\in\Theta(a,s_n)}
		\sum_{i\in S_\theta}G_{\theta_i}(c_j)
		\nonumber\\
		&\ge
		s_n(1+\eta_\delta)q_{j,n}
		=
		(1+\eta_\delta)j,
		\label{eq:heterogeneous_bulk_mean_lower}
	\end{align}
	uniformly over $L_n\le j\le r_n$.
	
	For each such $j$, $N_1(c_j)$ is a sum of independent Bernoulli random
	variables. Hence Chernoff's inequality gives
	\begin{align}
		\mathbb P_\theta\{N_1(c_j)<j\}
		&\le
		\mathbb P_\theta\left\{
		N_1(c_j)
		<
		\frac{1}{1+\eta_\delta}
		\mathbb E_\theta N_1(c_j)
		\right\}
		\nonumber\\
		&\le
		\exp\left\{
		-\frac{\eta_\delta^2}{2(1+\eta_\delta)}j
		\right\}.
		\label{eq:heterogeneous_bulk_chernoff}
	\end{align}
	Let
	\[
	C_\delta=\frac{\eta_\delta^2}{2(1+\eta_\delta)}>0.
	\]
	By the union bound,
	\begin{align}
		\sup_{\theta\in\Theta(a,s_n)}
		\mathbb P_\theta\left\{
		N_1(c_j)<j\ \text{for some }L_n\le j\le r_n
		\right\}
		&\le
		\sum_{j=L_n}^{r_n}\exp\{-C_\delta j\}
		\nonumber\\
		&\le
		\sum_{j=L_n}^{\infty}\exp\{-C_\delta j\}
		\to0.
		\label{eq:bulk_heterogeneous}
	\end{align}
	
	Combining \eqref{eq:heterogeneous_empirical_reduction},
	\eqref{eq:early_heterogeneous}, and \eqref{eq:bulk_heterogeneous}
	proves
	\[
	\sup_{\theta\in\Theta(a,s_n)}
	\mathbb P_\theta(R_n^{\GBS}<r_n)\to0.
	\]
This completes the proof of Lemma~\ref{lem:heterogeneous_empirical_crossing}.	
\end{proof}

Having established the empirical signal-crossing property for heterogeneous signal strengths, we are now in a position to derive the corresponding false-negative risk bound. This result plays the same role as Lemma 8 in the beta-min framework and provides the principal ingredient required for proving sharp asymptotic minimaxity over the general parameter class $\Theta(a,s_n)$.

\begin{lemma}[False-negative bound under heterogeneous signal-strength classes]
	\label{lem:false_negative_general}
	Under the sparsity condition \eqref{eq:general_sparse_regime} and Assumption~\ref{ass:alpha},
	\begin{equation}
		\sup_{\theta\in\Theta(a,s_n)}
		\frac{1}{s_n}
		\mathbb E_\theta
		T(\theta,\varphi^{\GBS})
		\le
		\Lambda_n(a)+o(1).
		\label{eq:false_negative_general_bound}
	\end{equation}
\end{lemma}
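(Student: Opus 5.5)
The plan mirrors the proof of Lemma~\ref{lem:false-negative}, with Lemma~\ref{lem:heterogeneous_empirical_crossing} playing the role of Lemma~\ref{lem:empirical_crossing}. Fix $\delta\in(0,1)$ and set $q_n=1-\Lambda_n(a)$ and $r_n=\lfloor (q_n-\delta)s_n\rfloor$.

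First I would establish the deterministic pathwise bound. On the event $\{R_n^{\GBS}\ge r_n\}$, at least $r_n-V(\theta,\varphi^{\GBS})$ signals are rejected. Since $r_n\le0$ is harmless, this gives, for every $\theta$ with $|S_\theta|=s_n$,
\[
T(\theta,\varphi^{\GBS})\le (s_n-r_n)_+ +V(\theta,\varphi^{\GBS})+s_n\mathbf 1\{R_n^{\GBS}<r_n\}.
\]
Taking expectations, dividing by $s_n$ and taking suprema over $\Theta(a,s_n)$ then yields
\[
\sup_{\theta\in\Theta(a,s_n)}\frac{1}{s_n}\mathbb E_\theta T
\le \Lambda_n(a)+\delta+\frac1{s_n}+\sup_{\theta}\frac1{s_n}\mathbb E_\theta V+\sup_\theta\mathbb P_\theta(R_n^{\GBS}<r_n).
\]
Here I use $1-r_n/s_n\le 1-q_n+\delta+1/s_n$ when $q_n>\delta$. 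When $q_n\le\delta$, the trivial bound $T\le s_n$ gives $1\le\Lambda_n(a)+\delta$ directly, so no probability term is needed. The fourth term is $o(1)$ uniformly by Lemma~\ref{lem:false-positive}.

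The main obstacle is the last term. Lemma~\ref{lem:heterogeneous_empirical_crossing} relies on Lemmas~\ref{lem:heterogeneous_initial_crossing} and~\ref{lem:heterogeneous_deterministic_crossing}, which are stated under the hypothesis $q_n\ge\delta$ along a subsequence, whereas $q_n$ may oscillate with $n$. I would handle this by a subsequence argument. Suppose the claimed bound failed, so that along some subsequence the left side exceeds $\Lambda_n(a)+3\delta$. Passing to a further subsequence, either $q_n\le\delta$ throughout, or $q_n>\delta$ throughout.
\begin{itemize}
\item In the first case the trivial bound gives a contradiction immediately.
\item In the second case the heterogeneous lemmas apply along that subsequence, so $\sup_\theta\mathbb P_\theta(R_n^{\GBS}<r_n)\to0$. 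The displayed inequality then bounds the left side by $\Lambda_n(a)+\delta+o(1)$, again a contradiction.
\end{itemize}
Before relying on this, I would check carefully that the $o(1)$ terms in those lemmas depend on $a$ only through the lower bound $q_n\ge\delta$. This holds because the constants $\rho_\delta$, $M$ and $\eta_\delta$ are chosen from $\delta$ alone, and because the Gaussian quantile expansion $z_{0,n}=a_n^*+o(1)$ is uniform in $q_{0,n}\in[\delta/2,1]$.

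Finally, since $\delta$ is arbitrary, letting $\delta\downarrow0$ gives $\sup_\theta s_n^{-1}\mathbb E_\theta T\le\Lambda_n(a)+o(1)$. More precisely, the subsequence argument shows that $\limsup_n\{\sup_\theta s_n^{-1}\mathbb E_\theta T-\Lambda_n(a)\}\le 3\delta$ for every $\delta$, and hence this $\limsup$ is at most $0$.
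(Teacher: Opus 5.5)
Your proposal is correct and follows the paper's proof essentially step for step. You fix $\delta$, split according to whether $q_n=1-\Lambda_n(a)$ exceeds $\delta$, use the pathwise bound $T\le s_n-r_n+V+s_n\mathbf 1\{R_n^{\GBS}<r_n\}$, and invoke Lemma~\ref{lem:false-positive} and Lemma~\ref{lem:heterogeneous_empirical_crossing}. Your explicit subsequence argument, together with the check that the constants depend only on $\delta$, makes rigorous what the paper leaves implicit in its per-$n$ case split: the $o(1)$ from the empirical crossing lemma is taken along the indices with $q_n>\delta$.
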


Lemma~\ref{lem:false_negative_general} provides the sharp asymptotic characterization of the false-negative contribution over the heterogeneous signal-strength classes. The leading term is governed by the aggregate quantity $\Lambda_n(a)$ introduced by \citet{ACR2024}, thereby recovering the exact sharp minimax benchmark for general signal strengths. Combined with the asymptotically negligible false-positive contribution established earlier, this lemma immediately yields the sharp asymptotic minimaxity results stated in Theorems 3 and 4.

\begin{proof}[{\bf Proof of Lemma~\ref{lem:false_negative_general}}]
	Fix $\delta\in(0,1)$ and write
	\begin{equation}
		q_n=1-\Lambda_n(a).
		\label{eq:qn_false_negative_general}
	\end{equation}
	If $q_n\le\delta$, then, for every $\theta\in\Theta(a,s_n)$,
	\begin{equation}
		\frac{1}{s_n}T(\theta,\varphi^{\GBS})
		\le
		1
		\le
		\Lambda_n(a)+\delta.
		\label{eq:false_negative_general_small_q}
	\end{equation}
	
	Now suppose $q_n>\delta$, and define
	\begin{equation}
		r_n=\lfloor(q_n-\delta)s_n\rfloor.
		\label{eq:rn_false_negative_general}
	\end{equation}
	By Lemma~\ref{lem:heterogeneous_empirical_crossing},
	\begin{equation}
		\sup_{\theta\in\Theta(a,s_n)}
		\mathbb P_\theta(R_n^{\GBS}<r_n)
		\to0.
		\label{eq:false_negative_general_emp_crossing}
	\end{equation}
	On the event $\{R_n^{\GBS}\ge r_n\}$,
	\[
	T(\theta,\varphi^{\GBS})
	\le
	s_n-r_n+V(\theta,\varphi^{\GBS}).
	\]
	On the complementary event, $T(\theta,\varphi^{\GBS})\le s_n$. Hence
	\begin{equation}
		T(\theta,\varphi^{\GBS})
		\le
		s_n-r_n
		+
		V(\theta,\varphi^{\GBS})
		+
		s_n\mathbf 1\{R_n^{\GBS}<r_n\}.
		\label{eq:false_negative_general_decomposition}
	\end{equation}
	Taking expectations, dividing by $s_n$, and taking the supremum gives
	\begin{align}
		\sup_{\theta\in\Theta(a,s_n)}
		\frac{1}{s_n}
		\mathbb E_\theta T(\theta,\varphi^{\GBS})
		&\le
		1-\frac{r_n}{s_n}
		+
		\sup_{\theta\in\Theta(a,s_n)}
		\frac{1}{s_n}
		\mathbb E_\theta V(\theta,\varphi^{\GBS})
		\nonumber\\
		&\qquad+
		\sup_{\theta\in\Theta(a,s_n)}
		\mathbb P_\theta(R_n^{\GBS}<r_n).
		\label{eq:false_negative_general_expectation_bound}
	\end{align}
	By \eqref{eq:rn_false_negative_general}, we obtain
	\begin{equation}
		1-\frac{r_n}{s_n}
		\le
		1-(q_n-\delta)+\frac{1}{s_n}
		=
		\Lambda_n(a)+\delta+\frac{1}{s_n}.
		\label{eq:false_negative_general_floor_bound}
	\end{equation}
	Moreover,
	\begin{equation}
		\sup_{\theta\in\Theta(a,s_n)}
		\frac{1}{s_n}
		\mathbb E_\theta V(\theta,\varphi^{\GBS})
		=o(1)
		\label{eq:false_positive_general_negligible}
	\end{equation}
	by Lemma~\ref{lem:false-positive}, and
	\begin{equation}
		\sup_{\theta\in\Theta(a,s_n)}
		\mathbb P_\theta(R_n^{\GBS}<r_n)
		=o(1)
		\label{eq:false_negative_general_rejection_failure}
	\end{equation}
	by Lemma~\ref{lem:heterogeneous_empirical_crossing}. Combining
	\eqref{eq:false_negative_general_expectation_bound}--\eqref{eq:false_negative_general_rejection_failure},
	we obtain
	\begin{equation}
		\sup_{\theta\in\Theta(a,s_n)}
		\frac{1}{s_n}
		\mathbb E_\theta T(\theta,\varphi^{\GBS})
		\le
		\Lambda_n(a)+\delta+o(1).
		\label{eq:false_negative_general_delta_bound}
	\end{equation}
	Since $\delta\in(0,1)$ is arbitrary, the desired bound follows.
\end{proof}

\bibliography{BSD_Reference.bib}

%
%
%
%
%

\end{document}